\documentclass[twoside,11pt]{article}
%
%
\RequirePackage{epsfig}
\RequirePackage{amssymb}
\RequirePackage{natbib}
\RequirePackage{graphicx}
\let\oldRequirePackage=\RequirePackage
\renewcommand{\RequirePackage}[2][]{}
\usepackage{jmlr2e}
\let\RequirePackage=\oldRequirePackage

\usepackage{hyperref}
\hypersetup{
    colorlinks = true, linkcolor=blue,
}
\usepackage{cmap}
\usepackage[utf8x]{inputenc}
\usepackage[T1]{fontenc}
\usepackage[english]{babel}
\usepackage{amssymb,amsmath}
\usepackage{bbm}
\usepackage{microtype}
\usepackage{lmodern}
\usepackage{xcolor}
\usepackage{ifthen}
\usepackage{graphicx}
\usepackage{mathrsfs}
\graphicspath{{./pictures/}}


\newcommand{\markupdraft}[2]{
    \ifthenelse{\equal{#1}{color}}{\color{#2}}{}
    \ifthenelse{\equal{#1}{display}}{#2}{}
    \ifthenelse{\equal{#1}{todo}}{#2}{}
}
\renewcommand{\markupdraft}[3][]{}  

    \setcounter{totalnumber}{3}    

\def\N{{\mathbb{N}}}

\def\R{{\mathbb{R}}}

\newcommand{\1}{\mathbbm{1}}
\DeclareMathOperator{\sign}{sign}

\renewcommand{\geq}{\geqslant}
\renewcommand{\leq}{\leqslant}

\def\eps{\varepsilon}
\renewcommand{\epsilon}{\varepsilon}
\renewcommand{\phi}{\varphi}

\def\d{{\mathrm{d}}}

\newcommand{\deq}{\mathrel{\mathop:}=}
\newcommand{\eqd}{=\mathrel{\mathop:}}

\newcommand{\mc}[2]{\newcommand{#1}{\ensuremath{#2}}} 
\newcommand{\mco}[3][]{\newcommand{#2}[1][#1]{\ensuremath{#3}}} 
\mc{\Ptheta}{{P_\theta}}
\mc{\gradt}{\widetilde\nabla_{\!\theta}}
\mc{\thegoal}{\E_{\Pt}\Wf}
\mc{\T}{\mathsf{T}}
\mc{\mueff}{\mu_\textsc{w}}
\mc{\thetat}{{\theta^t}}
\mc{\Pt}{{P_\theta}}
\mc{\Ptt}{P_{\thetat}}
\mc{\Wf}{W_{\thetat}^f}
\mc{\Qtt}{Q_{\thetat}}
\mc{\Cm}{C}
\mc{\Csr}{C^{-1/2}}
\mc{\n}{d}
\mc{\I}{\mathrm{I}_\n}
\mc{\m}{m}
\mc{\x}{x}
\mc{\etam}{\eta_\mathrm{m}}
\mc{\etac}{\eta_\mathrm{c}}
\mc{\thetatt}{\theta^{t+1}}
\newcommand{\abs}[1]{\left|\mskip1mu#1\right|}
\newcommand{\norm}[1]{\|#1\|}

\newcommand{\eqigo}{\eqref{eq:IGOupdate}}
\newcommand{\refeq}[1]{\eqref{eq:#1}}
\mc{\back}{\hspace*{-5em}}%
\newcommand{\g}{\mathscr{G}}

\DeclareMathOperator*{\argmax}{arg\,max}

\mco[i]{\wi}{\widehat{w}_{#1}}
\mc{\wk}{\widehat{w}_k}

\newcommand{\tsum}{{\textstyle \sum}}

\DeclareMathOperator{\Cov}{Cov}
\DeclareMathOperator{\Var}{Var}

\newcommand{\deltat}{\ensuremath{\hspace{0.05em}\delta\hspace{-.06em}t\hspace{0.05em}}}

\newcommand{\E}{\mathbb{E}} 

\newcommand{\KL}[2]{\mathrm{KL}\!\left(#1 \,|\hspace{-.15ex}|\, #2\right)}
\DeclareMathOperator{\Ent}{Ent}

\newtheorem{thm}{Theorem}
\newtheorem{prop}[thm]{Proposition}
\newtheorem{lem}[thm]{Lemma}
\newtheorem{defi}[thm]{Definition}
\newtheorem{cor}[thm]{Corollary}

\newcommand{\notecolored}[3][]{\markupdraft{display}{{\color{#2}\noindent[Note (#1): #3]}}}
\newcommand{\newcolored}[3][]{{\markupdraft{color}{#2}#3}
    \ifthenelse{\equal{#1}{}}{}{\markupdraft{display}{{\color{yellow!70!black}[#1]}}}} 

\newcommand{\del}[2][]{{\markupdraft{display}{{\color{yellow!80!black}[removed: "#2"[#1]]}}}} 
\renewcommand{\del}[2][]{}
\newcommand{\new}[2][]{\newcolored[#1]{blue!80!black}{#2}}
\newcommand{\note}[2][]{\notecolored[#1]{green!75!black}{#2}}    
    
\newcommand{\NOTE}[2][]{\notecolored[#1]{magenta}{#2}}    
\newcommand{\TODO}[1][]{\markupdraft{todo}{{\color{red}\noindent==TODO: #1==}}}


\newcommand{\NDY}[1]{\NOTE[Yann]{#1}}

\newcommand{\niko}[1]{\notecolored[Niko]{green!50!blue}{#1}}


\jmlrheading{18}{2017}{1-65}{11/14; Revised 10/16}{4/17}{14-467}{Yann Ollivier, Ludovic Arnold, Anne Auger and Nikolaus Hansen}


\ShortHeadings{Information-Geometric Optimization}{Ollivier, Arnold, Auger and Hansen}
\firstpageno{1}

\begin{document}

\title{Information-Geometric Optimization Algorithms: A
Unifying Picture via Invariance Principles}

\author{\name Yann Ollivier \email yann.ollivier@lri.fr \\
       \addr CNRS \& LRI (UMR 8623), Universit{\'e} Paris-Saclay \\
       91405 Orsay, France
       \AND
       \name Ludovic Arnold \email arnold@lri.fr \\
       \addr Univ.\ Paris-Sud, LRI \\
       91405 Orsay, France
       \AND
       \name Anne Auger \email anne.auger@inria.fr\\
           \name Nikolaus Hansen \email nikolaus.hansen@inria.fr\\
       \addr Inria \& CMAP, Ecole polytechnique\\
       91128 Palaiseau, France
      }

\editor{Una-May O'Reilly}

\maketitle

\begin{abstract}
We present a canonical way to turn any smooth parametric family of
probability distributions on an arbitrary search space $X$ into a continuous-time black-box
optimization method on $X$, the \emph{information-geometric optimization} (IGO) method. Invariance as a major design principle
keeps the number of arbitrary choices to a minimum.
The resulting \emph{IGO flow} is the flow of an ordinary
differential equation
conducting the natural gradient
ascent of an adaptive, time-dependent transformation of the objective
function. It makes no particular assumptions on the objective function
to be optimized. 

The IGO method produces explicit IGO algorithms through time
discretization. It naturally recovers versions of known algorithms
and offers a systematic way to derive new ones. In
continuous search spaces, IGO algorithms take a form related to natural evolution strategies (NES). The cross-entropy method is recovered in a particular case with a large time step, and can be extended into a smoothed,
parametrization-independent maximum likelihood update (IGO-ML). 
When applied to the family of Gaussian distributions on
$\R^d$, the IGO framework recovers a version of the well-known CMA-ES algorithm and of xNES. For
the family of Bernoulli distributions on $\{0,1\}^d$, we recover the
seminal PBIL algorithm \new{and cGA}.  For the distributions of restricted Boltzmann
machines, we naturally obtain a novel algorithm for discrete optimization
on $\{0,1\}^d$. All these algorithms are natural instances of, and unified under, the single information-geometric optimization framework.

The IGO method achieves, thanks to its intrinsic formulation,
maximal invariance properties: invariance under reparametrization of the
search space $X$, under a change of parameters of the probability
distribution, and under increasing transformation of the function to be
optimized. The latter is achieved through an adaptive, quantile-based 
formulation of the objective.

Theoretical considerations strongly suggest that IGO algorithms are
essentially characterized by a minimal change of the distribution over
time. Therefore they have minimal loss
in diversity through the course of optimization, provided the initial diversity is high.
First experiments using restricted
Boltzmann machines confirm this insight. As a simple consequence, IGO seems to provide, from information theory, an
elegant way to \new{simultaneously}\del{spontaneously}
explore several valleys of a fitness landscape in a single run.
\end{abstract}

\begin{keywords} black-box optimization, stochastic optimization, randomized optimization, natural gradient, invariance, evolution strategy, information-geometric optimization

\end{keywords}

\section{Introduction}

%
Optimization problems are at the core of many disciplines. Given an objective
function $f: X\to\R$, to be optimized on some space $X$, the
goal of black-box optimization is to find solutions $x\in X$ with
small (in the case of minimization) value $f(x)$, using the least
number of calls to the function $f$. In a \emph{black-box} scenario,
 knowledge about the function $f$ is restricted to the
handling of a device (e.g., a simulation code) that delivers the value $f(x)$ for any input $x\in X$.
The search space $X$ may be finite, discrete infinite, or continuous.
However, optimization algorithms are often designed for a specific
type of search space, exploiting its specific structure.

One major design principle in general and in optimization in particular
is related to \emph{invariance}. Invariance extends performance
observed on a single function to an entire associated invariance class,
that is, it generalizes from a single problem to a class of problems. Thus it hopefully provides better robustness w.r.t.\ changes in
the presentation of a problem.
For continuous search spaces, invariance under
translation of the coordinate system is standard in optimization.
Invariance under general affine-linear changes of the coordinates 
has been---we believe---one of the keys to the success of 
the Nelder-Mead \emph{Downhill Simplex method} \citep{NelderMead:65}, 
of quasi-Newton methods \citep{deuflhard2011newton} and of the
\emph{covariance matrix adaptation evolution strategy}, CMA-ES
\citep{hansen2001,hansen2014principled}. 
While these relate to transformations in the search
space, another important invariance concerns the application of
monotonically increasing transformations to $f$, so that it is indifferent whether the function $f$, $f^{3}$ or $f\times |f|^{-2/3}$ is minimized. This way some
non-convex or non-smooth functions can be as ``easily'' optimised as convex ones. Invariance under $f$-transformation is not uncommon, 
e.g., for evolution strategies \citep{schwefel1995evolution} 
or pattern search methods \citep{Hooke:Jeeves:61,Torczon:97,NelderMead:65}; however
it has not always been recognized as an attractive feature.

Many stochastic optimization methods have been proposed to tackle
black-box optimization. The underlying (often hidden) principle of these
stochastic
methods is to iteratively update a probability distribution $P_\theta$
defined on $X$, parametrized by a set of parameters $\theta$. At a given
iteration, the distribution $P_{\theta}$ represents, loosely speaking,
the current belief about where solutions with the smallest values of the
function $f$ may lie. Over time, $P_{\theta}$ is expected to concentrate
around the minima of $f$. The update of the distribution involves
querying the function with points sampled from the current
probability distribution $P_{\theta}$. Although implicit in the
presentation of many stochastic optimization algorithms, this is the
explicit setting for  the wide family of 
\emph{estimation of distribution algorithms} (EDA)
\citep{larranaga2002estimation,Baluja:ICML1995,pelikan2002survey}. Updates
of the probability distribution often rely on heuristics
(nevertheless in \citealt{Toussaint2004} the possible interest of
information geometry to exploit the structure of probability
distributions for designing better grounded heuristics is pointed out). In addition, in the
EDA setting we can distinguish two theoretically founded approaches to
update $P_{\theta}$. 

First, the \emph{cross-entropy} method 
consists in  taking $\theta$ minimizing the Kullback--Leibler divergence
between $P_{\theta}$ and the indicator of the best points according to
$f$ \citep{CEMtutorial}.

Second, one can
transfer the objective function $f$ to the space of parameters $\theta$
by taking the average of $f$ under $P_{\theta}$, seen as a function of
$\theta$. This average is a
new function from an Euclidian space to $\R$ and is minimal when
$P_\theta$ is concentrated on minima of $f$. Consequently, $\theta$ can
be updated  by following a gradient
descent of this function with respect to $\theta$.
This has been done in various situations
such as $X=\{0,1\}^{d}$ and the family of Bernoulli measures
\citep{Berny:2000a} or of Boltzmann machines \citep{Berny2002}, or on
$X=\R^{d}$ for the family of Gaussian distributions \citep{Berny:Gaussian,Gallagher:2005}
or the exponential family also using second order information \citep{zhou2014gradient,zhou2017gradient}.

However, taking the ordinary gradient with respect to $\theta$
depends on the precise way a parameter $\theta$ is chosen to
represent the distribution $P_\theta$, and does not
take advantage of the Riemannian metric structure of families of
probability distributions. In the context of machine learning, Amari
noted the shortcomings of the ordinary gradient for families of probability
distributions \citep{Amari:1998} and proposed instead to use the natural
gradient with respect to the Fisher metric \citep{Rao1945,
Jeffreys1946,Amari2000book}. In the context of optimization, this natural
gradient has been used for exponential
families on $X=\{0,1\}^{d}$
\citep{MalagoGECCO2008InformationGeometryPerspectiveOfEDAs,
malago2011geometry} and for the family of Gaussian distributions on $X =
\R^{d}$ with so-called natural evolution strategies (NES,
\citealt{Wierstra2008, Sun:2009:ENE:1569901.1569976,Glasmachers2010,wierstra2014natural}). 

However, none of the previous attempts using gradient updates captures
the invariance under increasing transformations of the objective
function, which is instead, in some cases, enforced \emph{a posteriori} with heuristic arguments.

Building on these ideas, this paper overcomes the invariance problem of
previous attempts and provides a consistent, unified picture of optimization on arbitrary search spaces via invariance principles. More specifically, we
consider an arbitrary search space $X$, either discrete or continuous,
and a black-box optimization problem on $X$, that is, a function $f:X\to\R$ to be minimized. We assume that a
family of probability distributions $P_\theta$ on $X$ depending on a
continuous multicomponent parameter $\theta\in \Theta$ has been chosen. A
classical example is to take $X=\R^d$ and to consider the family of all
Gaussian distributions $P_\theta$ on $\R^d$, with $\theta=(m,C)$ the mean
and covariance matrix.  Another simple example is $X=\{0,1\}^d$ equipped
with the family of Bernoulli measures: $\theta=(\theta_i)_{1\leq i
\leq d}$ and $P_\theta(x)=\prod \theta_i^{x_i}(1-\theta_i)^{1-x_i}$ for
$x=(x_i)\in X$.

From this setting, \emph{information-geometric optimization} (IGO) can be defined in a natural way. At each
(continuous) time
$t$, we maintain a value $\theta^t$ of the parameter of the distribution. 
The function $f$ to be optimized is transferred to the parameter space
$\Theta$ by means of a suitable time-dependent transformation based on the
\emph{$P_{\theta^t}$-levels} of $f$ (Definition~\ref{def:quantiles}). The
\emph{IGO flow}, introduced in Definition~\ref{def:IGOflow}, follows the natural gradient of the expected value of this function of
$\theta^t$ in the parameter space $\Theta$, where
the natural gradient derives from the Fisher information metric.
The IGO flow is thus the flow of an ordinary differential equation in space
$\Theta$.
This continuous-time gradient flow is turned into a family of explicit
\emph{IGO algorithms} by
taking an Euler time discretization of the differential equation
and approximating the distribution $P_{\theta^t}$ by using samples.
From the start, the IGO flow is invariant under strictly increasing
transformations of $f$ (Proposition~\ref{prop:finv}); we also
prove that the sampling procedure is
consistent (Theorem~\ref{thm:consistency}). IGO algorithms share
their final algebraic form with the \emph{natural evolution strategies} (NES)
introduced in the Gaussian setting \citep{Wierstra2008,
Sun:2009:ENE:1569901.1569976,Glasmachers2010,wierstra2014natural}; the latter are thus recovered in the IGO framework
as an Euler approximation to a well-defined flow, without heuristic arguments.

The IGO method also has an equivalent description as an
\emph{infinitesimal maximum likelihood update} (Theorem~\ref{thm:IGOML});
this reveals a new property of the natural gradient and does not
require a smooth parametrization by $\theta$ anymore. This also
establishes a specific link (Theorem~\ref{thm:IGOCEM}) between IGO, the
natural gradient, and the
\emph{cross-entropy method} \citep{CEMtutorial}.

When we instantiate IGO
using the family of Gaussian
distributions on $\R^d$, we naturally obtain versions of the well-known
\emph{covariance matrix adaptation evolution strategy} (CMA-ES,
\citealt{hansen2001, hansen2004, jastrebski2006improving}) and of
\emph{natural evolution strategies}. With Bernoulli measures on the
discrete cube $\{0,1\}^d$, we recover (Proposition~\ref{prop:PBIL}) the well-known \emph{population-based incremental learning} (PBIL, \citealt{Baluja:ICML1995,Baluja:PBIL:1994})
and the \emph{compact genetic algorithm} (cGA, \citealt{harik1999compact}); 
this derivation of PBIL 
or cGA as
a natural gradient ascent appears to be new, and emphasizes
the common ground between continuous and discrete optimization.

From the IGO framework, it is (theoretically) immediate to build new optimization
algorithms using more complex families of distributions than Gaussian or
Bernoulli. As an illustration, distributions associated with restricted Boltzmann machines
(RBMs) provide a new but natural algorithm for discrete optimization on
$\{0,1\}^d$ which is able to handle dependencies between the bits (see also
\citealt{Berny2002}).  The probability distributions associated with RBMs
are multimodal; combined with the specific information-theoretic properties
of IGO that guarantee minimal change in diversity over time, this allows
IGO to reach multiple optima at once in a natural way, at least in a simple
experimental setup (Appendix~\ref{sec:RBM}). 

The IGO framework is built to achieve maximal \emph{invariance
properties}. First, the IGO flow is invariant under reparametrization of the
family of distributions
$P_\theta$, that is, it only depends on $P_\theta$ and not on the way we
write the parameter $\theta$ (Proposition~\ref{prop:thetainv}). This invariance under $\theta$-reparametrization is the main idea behind
\emph{information geometry} \citep{Amari2000book}. For instance, for Gaussian measures it
should not matter whether we use the covariance
matrix or its inverse or a Cholesky factor as
the parameter. This
limits the influence of encoding choices on the behavior of the
algorithm. Second, the IGO flow is invariant under a change of
coordinates in the search space $X$, provided that this change of coordinates globally
preserves the family of distributions $P_\theta$
(Proposition~\ref{prop:Xinv}).
For instance, for Gaussian
distributions on $\R^d$, this includes all affine changes of
coordinates. This means that the algorithm, apart from
initialization, does not depend on the
precise way the data is presented. Last, the IGO flow and IGO algorithms are invariant
under applying a strictly increasing function to $f$ (Proposition~\ref{prop:finv}).
Contrary to previous formulations using natural gradients
\citep{Wierstra2008,Glasmachers2010,Akimoto:ppsn2010}, this invariance is achieved from
the start. Such invariance
properties mean that we deal with \emph{intrinsic} properties of the
objects themselves, and not with the way we encode them as collections of
numbers in $\R^d$. It also means, most importantly, that we make a
minimal number of arbitrary choices.

\bigskip

In Section~\ref{sec:descr}, we define the IGO flow and the IGO algorithm.
We begin with standard facts about the definition and basic properties of
the natural gradient, and its connection with Kullback--Leibler
divergence and diversity. We then proceed to the detailed description of
the algorithm.

In Section~\ref{sec:properties}, we state some first mathematical
properties of IGO. These include monotone improvement of the objective
function, invariance properties, and the form of IGO for exponential families
of probability distributions.\del{, and the case of noisy objective
functions.}

In Section~\ref{sec:IGOML} we explain the theoretical relationships
between IGO, maximum likelihood estimates and the cross-entropy method.
In particular, IGO is
uniquely characterized by a weighted log-likelihood maximization
property (Theorem~\ref{thm:IGOML}).

In Section~\ref{sec:CMA-PBIL-IGO}, we apply the IGO framework to 
explicit families of probability distributions. Several 
well-known optimization algorithms are recovered this way. These include PBIL
(Sec.~\ref{sec:PBIL}) using Bernoulli distributions, and
versions of CMA-ES and other evolutionary algorithms such as
EMNA and xNES (Sec.~\ref{sec:gaussian}) using Gaussian 
distributions.

\new{Appendix~\ref{sec:discussion} discusses further aspects, perspectives and implementation matters of the IGO framework.}
In Appendix~\ref{sec:RBM}, we illustrate how IGO can be used to design new
optimization algorithms, by deriving the IGO
algorithm associated with restricted Boltzmann machines. We illustrate
experimentally, on a simple bimodal example, 
the specific influence of the Fisher information
matrix on the optimization
trajectories, and in particular on the diversity of the optima
obtained.
\new{Appendix~\ref{sec:moremath} details}\del{
The Appendix also contains further discussion, details for} a number of
further mathematical properties of IGO (such as its invariance properties
or the case of noisy objective functions). \new{Appendix~\ref{sec:proofs} contains the previously omitted}\del{, as well as the} proofs of the
mathematical statements.

\section{Algorithm Description}
\label{sec:descr}

We now present the outline of the algorithm. Each step is described in
more detail in the sections below.

The IGO flow can be seen as an \emph{estimation of
distribution algorithm}: at each time $t$, we maintain a probability
distribution $P_{\theta^t}$ on the search space $X$, where $\theta^t\in
\Theta$. The value of $\theta^t$ will evolve so that, over time,
$P_{\theta^t}$ gives more weight to points $x$ with better values of the
function $f(x)$ to optimize.

A straightforward way to proceed is to transfer $f$ from $x$-space to $\theta$-space: define a function $F(\theta)$ as the
$P_\theta$-average of $f$ and then do a gradient descent for
$F(\theta)$ in space $\Theta$ \citep{Berny:2000a,Berny2002,Berny:Gaussian,Gallagher:2005}.
This way, $\theta$ will converge to a point such that $P_\theta$ yields a
good average value of $f$. We depart from this approach in two ways:
\begin{itemize}
\item At each time, we replace $f$ with an adaptive transformation of $f$
representing how good or bad observed values of $f$ are \emph{relative to other
observations}. This provides invariance under all monotone transformations of
$f$.
\item Instead of the vanilla gradient for $\theta$, we use the so-called
\emph{natural gradient} given by the Fisher information matrix.
This reflects
the intrinsic geometry of the space of probability distributions,
as introduced by Rao and Jeffreys \citep{Rao1945, Jeffreys1946} and later
elaborated upon by Amari and others \citep{Amari2000book}. 
This provides
invariance under reparametrization of $\theta$ and, importantly,
minimizes the change of diversity of $P_\theta$.
\end{itemize}

The algorithm is constructed in two steps: we first give an ``ideal'' version, namely, a version in
which time $t$ is continuous so that the evolution of $\theta^t$ is given
by an ordinary differential equation in $\Theta$. Second, the actual algorithm
is a time discretization using a finite time step and Monte Carlo
sampling instead of exact $P_\theta$-averages.

\subsection{The Natural Gradient on Parameter Space}
\label{sec:grad}
\new{We recall suitable definitions of the vanilla and the natural gradient
and motivate using the natural gradient in the context of optimization. }

\subsubsection{About Gradients and the Shortest Path Uphill} Let $g$ be a smooth function from $\R^d$ to $\R$,
to be maximized. We first recall the interpretation of gradient ascent as
``the shortest path uphill''.

Let $y\in \R^d$. Define the vector $z$ by
\begin{equation}\label{eq:grad-def-const-norm}
z=\lim_{\epsilon\to 0} \argmax_{z,\,\norm{z}\leq
1} g(y+\eps z)\enspace.
\end{equation}
Then one can check that $z$ is the normalized gradient of $g$ at $y$:
$z_i=\frac{\partial g/\partial y_i}{\norm{\partial g/\partial y}}$.
(This holds only at points $y$ where the gradient of $g$ does not vanish.)


This shows that, for small $\deltat$, the well-known gradient ascent of $g$ given by
\[
y^{t+\deltat}_i=y^t_i+\deltat\,\tfrac{\partial g}{\partial y_i}
\]
realizes the largest increase of
the value of $g$, for a given step size $\norm{y^{t+\deltat}-y^t}$.


The relation \eqref{eq:grad-def-const-norm} depends on the choice of a
norm $\norm{\!\cdot\!}$ (the gradient of $g$ is given by $\partial g/\partial y_i$
only in an orthonormal basis).
If we use, instead of the
standard metric $\norm{y-y'}=\sqrt{\sum (y_i-y'_i)^2}$ on $\R^d$, a
metric $\norm{y-y'}_A=\sqrt{\sum A_{ij} (y_i-y'_i)(y_j-y'_j)}$ defined by
a positive definite matrix $A_{ij}$, then the gradient of $g$ with
respect to this metric is given by $\sum_j A^{-1}_{ij}
\frac{\partial g}{\partial y_i}$. This follows from the textbook
definition of 
gradients by $g(y+\eps
z)=g(y)+\eps\langle \nabla g,z\rangle_A +O(\eps^2)$ with
$\langle\cdot,\cdot\rangle_A$ the
scalar product associated with the matrix $A_{ij}$ \citep{SchwartzAnalyseII}.


It is possible to write the analogue of \eqref{eq:grad-def-const-norm}
using the
$A$-norm.
We then find that the gradient ascent associated with metric $A$ is given by
\[
y^{t+\deltat}=y^t+\deltat\,A^{-1}\,\tfrac{\partial g}{\partial y}
\]
for small $\deltat$ and
maximizes the increment of $g$ for a given $A$-distance
$\norm{y^{t+\deltat}-y^t}_A$---it realizes the steepest $A$-ascent.
Maybe this viewpoint clarifies the relationship between gradient and
metric: this steepest ascent property can actually be used as a
definition of gradients.

In our setting we want to use a gradient ascent in the parameter space
$\Theta$ of our distributions $P_\theta$. The ``vanilla'' gradient
$\frac{\partial }{\partial \theta_i}$ is associated with the metric
$\norm{\theta-\theta'}=\sqrt{\sum (\theta_i-\theta'_i)^2}$ and clearly
depends on the choice of parametrization $\theta$. Thus this metric,
and the
direction pointed by this gradient, are not
intrinsic, in the sense that they do not depend only on the
\emph{distribution} $P_\theta$. A metric depending on $\theta$ only
through the distributions $P_\theta$ can be defined as follows.

\subsubsection{Fisher Information and the Natural Gradient on Parameter Space}
Let $\theta,\theta'\in\Theta$ be two values of the distribution
parameter. A widely used way to define a ``distance'' between two
generic distributions\footnote{Throughout the text we \new{do not distinguish}\del{will conflate} a
probability distribution $P$, seen as a measure, and its density
with respect to some unspecified reference measure $\d x$, and so will
write indifferently $P(\d x)$ or $P(x)\d x$. The
measure-theoretic viewpoint allows for a unified treatment of the
discrete and continuous case.} $P_\theta$ and $P_{\theta'}$ is the
\emph{Kullback--Leibler divergence} from information theory, defined \citep{Kullback} as 
\[
\KL{P_{\theta'}}{P_\theta}=\int_x
\ln\frac{P_{\theta'}(\d x)}{P_\theta(\d x)}\,P_{\theta'}(\d x)\enspace.
\]
When $\theta'=\theta+\delta\theta$ is close to $\theta$, under mild
smoothness assumptions we can expand the
Kullback--Leibler divergence at second order in $\delta\theta$. This
expansion defines the
Fisher information matrix $I$ at $\theta$ \citep{Kullback}:
\begin{equation}
\label{eq:IKL}
\KL{P_{\theta+\delta\theta}}{P_\theta}=
\frac12 \sum I_{ij}(\theta)\,\delta\theta_i \delta\theta_j
+ O(\delta\theta^3)\enspace.
\end{equation}
An equivalent definition of the
Fisher information matrix is by the usual
formulas \citep{CoverThomas}
\[
I_{ij}(\theta)=\int_x \frac{\partial \ln P_\theta(x)}{\partial
\theta_i}\frac{\partial \ln P_\theta(x)}{\partial \theta_j}\,P_\theta(\d x)
=-\int_x \frac{\partial^2 \ln P_\theta(x)}{\partial\theta_i \, \partial \theta_j} \,
P_\theta(\d x)
\enspace.
\]

The Fisher information matrix defines a (Riemannian) metric on $\Theta$:
the distance, in this metric, between two very
close values of $\theta$ is given by the square root of twice the Kullback--Leibler divergence.
Since the Kullback--Leibler divergence depends only on $P_\theta$ and not
on the parametrization of $\theta$, this metric is intrinsic.

If $g:\Theta\to \R$ is a smooth function on the parameter
space, its \emph{natural gradient} 
\citep{Amari:1998} at $\theta$ is defined in accordance with the Fisher metric as
\[
(\widetilde\nabla_{\!\theta} \, g)_i=\sum_j I^{-1}_{ij}(\theta)\,\frac{\partial g(\theta)}{\partial
\theta_j}
\]
or more synthetically
\[
\widetilde\nabla_{\!\theta}\, g=I^{-1} \, \frac{\partial g}{\partial \theta}\enspace.
\]
\mco[]{\vanilla}{\frac{\partial#1}{\partial \theta}}%
From now on, we will use $\widetilde\nabla_{\!\theta}$ to denote the natural gradient
and $\frac{\partial}{\partial \theta}$ to denote the vanilla gradient.

By construction, the natural gradient descent is
intrinsic: it does not depend on the chosen parametrization $\theta$ of
$P_\theta$, so that it makes sense to speak of the natural gradient
ascent of a function $g(P_\theta)$.
The Fisher metric is essentially the only way to obtain this
property \cite[Section 2.4]{Amari2000book}.

Given that the Fisher metric comes from the Kullback--Leibler
divergence, the ``shortest path uphill'' property of gradients mentioned
above translates as follows (see also \citealt[Theorem~1]{Amari:1998}):
\begin{prop}
\label{prop:KLuphill}
The natural gradient ascent points in the direction $\delta\theta$
achieving the largest change of the objective function, for a given
distance between $P_\theta$ and $P_{\theta+\delta\theta}$ in
Kullback--Leibler
divergence. More precisely,
let $g$ be a smooth function on the parameter space $\Theta$. Let
$\theta\in\Theta$ be a point where $\widetilde\nabla g(\theta)$ does not
vanish. Then, if
\[
\delta\theta=
\frac{\widetilde\nabla
g(\theta)}{\rule{0cm}{2ex}\norm{\widetilde\nabla g(\theta)}}
\]
is the direction of the natural gradient of $g$ (with $\norm{\cdot}$ the
Fisher norm), we have
\[
\delta\theta=\lim_{\eps\to 0} \frac1\eps
\argmax_{
\begin{subarray}{c}
\delta\theta'\text{such that}
\\
\KL{P_{\theta+\delta\theta'}}{P_{\theta}}\leq \eps^2/2
\end{subarray}
}
g(\theta+\delta\theta').
\]
\end{prop}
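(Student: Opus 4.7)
The plan is to reduce the constrained maximization to the steepest-$A$-ascent framework already described in the excerpt, taking $A=I(\theta)$ to be the Fisher information matrix. The key observation is that, by the very definition of the Fisher metric, the Kullback--Leibler ball $\{\delta\theta:\KL{P_{\theta+\delta\theta}}{P_\theta}\leq \eps^2/2\}$ is, to leading order in $\eps$, an $I(\theta)$-ball of radius $\eps$; once this is established, the previous discussion shows that the optimal displacement is aligned with the $I$-gradient, i.e.\ the natural gradient.

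Concretely, I would first Taylor-expand the constraint,
\[
\KL{P_{\theta+\delta\theta}}{P_\theta}=\tfrac12\,\delta\theta^\top I(\theta)\,\delta\theta+O(\norm{\delta\theta}^3),
\]
so the constraint becomes $\delta\theta^\top I \delta\theta\leq \eps^2$ up to higher-order corrections. Then I would Taylor-expand the objective, $g(\theta+\delta\theta)-g(\theta)=\langle\partial g/\partial\theta,\delta\theta\rangle+O(\norm{\delta\theta}^2)$, and rewrite the linear term as an inner product in the Fisher metric: $\langle\partial g/\partial\theta,\delta\theta\rangle=\langle I^{-1}\partial g/\partial\theta,\delta\theta\rangle_I=\langle\widetilde\nabla g,\delta\theta\rangle_I$. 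Cauchy--Schwarz in the Fisher metric then yields $\langle\widetilde\nabla g,\delta\theta\rangle_I\leq \norm{\widetilde\nabla g}_I\cdot\norm{\delta\theta}_I$, with equality iff $\delta\theta$ is a positive multiple of $\widetilde\nabla g$. Saturating the constraint at $\norm{\delta\theta}_I=\eps$, the leading-order maximizer is $\delta\theta^\star=\eps\,\widetilde\nabla g/\norm{\widetilde\nabla g}_I$; dividing by $\eps$ and letting $\eps\to 0$ gives the stated formula.

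The main subtlety I expect is justifying that the cubic remainder in the expansion of KL and the quadratic remainder in the expansion of $g$ do not perturb the limiting direction. Since $\widetilde\nabla g(\theta)\neq 0$, for $\eps$ small enough the linearized problem has a unique maximizer on the Fisher ellipsoid and its Hessian on the boundary is nondegenerate transversally to the boundary. A standard perturbation/implicit-function argument then shows that the true maximizer $\delta\theta^\star_\eps$ differs from $\eps\,\widetilde\nabla g/\norm{\widetilde\nabla g}_I$ by a quantity $o(\eps)$, so the normalized limit $\delta\theta^\star_\eps/\eps$ converges as claimed. One must also check smoothness of $P_\theta$ in $\theta$ and non-degeneracy of $I(\theta)$ at the point considered, which the statement tacitly assumes through its ``mild smoothness assumptions''.
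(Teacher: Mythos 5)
Your proof is correct and follows essentially the same route as the paper, which gives no separate formal proof but derives the proposition from the steepest-$A$-ascent discussion with $A$ taken to be the Fisher matrix, using the second-order expansion $\KL{P_{\theta+\delta\theta}}{P_\theta}=\frac12\sum I_{ij}\,\delta\theta_i\delta\theta_j+O(\delta\theta^3)$ to identify the KL ball with the Fisher ball. Your additional care with the remainder terms (Cauchy--Schwarz in the Fisher metric plus the perturbation argument showing the true maximizer is within $o(\eps)$ of the linearized one) only makes explicit what the paper leaves implicit.
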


Here we have implicitly assumed that the parameter space $\Theta$ is
such that no two points $\theta\in\Theta$ define the same
probability distribution, and the mapping $P_\theta\mapsto\theta$
is smooth.

\subsubsection{Why Use the Fisher Metric Gradient for Optimization?
Relationship to Diversity}
The first reason
for using the natural gradient is its reparametrization invariance, which
makes it the only gradient available in a general abstract setting
\citep{Amari2000book}.
Practically, this invariance also limits the influence of encoding
choices on the behavior of the algorithm (Appendix~\ref{sec:invariance}).
\del{Finally t}\new{T}he Fisher matrix can be also seen as an \emph{adaptive
learning rate} for different components of the parameter vector
$\theta_i$: components $i$ with a high impact on $P_\theta$ will be
updated more cautiously.

Another advantage comes from the relationship with Kullback--Leibler
distance in view of the ``shortest path uphill'' (see also
\citealt{Amari:1998}).
To
minimize the value of some function $g(\theta)$ defined on the parameter
space $\Theta$, the naive approach follows a gradient descent for
$g$ using the ``vanilla'' gradient
\[
\theta_i^{t+\deltat}=\theta_i^t+\deltat \tfrac{\partial g}{\partial
\theta_i}
\]
and, as explained above, this maximizes the increment of $g$ for a given
increment $\norm{\theta^{t+\deltat}-\theta^t}$. On the other hand, the Fisher
gradient
\[
\theta_i^{t+\deltat}=\theta_i^t+\deltat I^{-1} \tfrac{\partial
g}{\partial \theta_i}
\]
maximizes the increment of $g$ for a given Kullback--Leibler distance
$\KL{P_{\theta^{t+\deltat}}}{P_{\theta^t}}$.

In particular, if we choose an initial value $\theta^0$ such that
$P_{\theta^0}$ covers the whole space $X$ uniformly (or a wide portion,
in case $X$ is unbounded), the
Kullback--Leibler divergence between $P_{\theta^t}$ and $P_{\theta^0}$ is
the Shannon entropy of the uniform distribution minus the Shannon entropy
of $P_{\theta^t}$, and so this divergence measures the loss of diversity
of $P_{\theta^t}$ with respect to the uniform distribution.

\begin{prop}
\label{prop:maxent}
Let $g:\Theta\to\R$ be a regular function of $\theta$ and let $\theta^0$
such that $P_{\theta^0}$ is the uniform distribution on a finite space
$X$. Let $(\theta^t)_{t\geq 0}$ be the trajectory of the gradient ascent
of $g$ using the natural gradient. Then for small $t$ we have
\begin{equation}
\theta^t=\argmax_\theta \left\{
t\cdot g(\theta)+\Ent(P_\theta)
\right\}+o(t)
\end{equation}
where $\Ent$ is the Shannon entropy.
\end{prop}

(We have stated the proposition over a finite space to have a well-defined
uniform distribution. A short proof, together with the regularity
conditions on $g$, is given in Appendix~\ref{sec:proofs}.)

So following the natural gradient of a function $g$, starting at or close
to the uniform distribution, amounts to optimizing the function $g$ with \emph{minimal loss of
diversity, provided the initial diversity is large}. (This is valid, of
course, only at the beginning; once one gets too far from uniform, a
better interpretation is minimal \emph{change} of diversity.) On the other hand,
the vanilla gradient
descent does not satisfy Proposition~\ref{prop:maxent}: it optimizes $g$ with minimal change in the numerical values of the
parameter $\theta$, which is of little interest.

So arguably this method realizes the best trade-off between optimization
and loss of diversity. (Though, as can be seen from the detailed
algorithm description below, maximization of diversity occurs only
greedily at each step, and so there is no guarantee that after a given
time, IGO will provide the highest possible diversity for a given
objective function value.)

An experimental confirmation of the positive influence of the Fisher
matrix on diversity is given in Appendix~\ref{sec:RBM} below.

\subsection{IGO: Information-Geometric Optimization}
\label{sec:igo}
\new{We now introduce a quantile-based rewriting of the objective function. From applying the natural gradient on the rewritten objective we derive \emph{information-geometric optimization}. }

\subsubsection{Quantile Rewriting of $f$} Our original problem is to
minimize a function $f:X\to \R$. A simple way to turn $f$ into a function
on $\Theta$ is to use the expected value $-\E_{P_\theta} f$
\citep{Berny:2000a,Wierstra2008}, but expected values can be unduly
influenced by extreme
values and using them can be rather unstable \citep{whitley1989genitor};
moreover $-\E_{P_\theta} f$ is not invariant under increasing transformation
of $f$ (this invariance implies we can only \emph{compare} $f$-values, not sum them up).

Instead, we take an adaptive, quantile-based approach by first
replacing the function $f$ with
a monotone rewriting $W_{\theta^t}^f$, depending on the current parameter
value $\theta^t$, and then following the gradient of
$\E_{P_\theta} W_{\theta^t}^f$, seen as a function of $\theta$. A due
choice of $W_{\theta^t}^f$ allows to control
the range of the resulting values and achieves the desired invariance.
Because the rewriting $W_{\theta^t}^f$ depends on $\theta^t$, it might be viewed as an \emph{adaptive}
$f$-transformation.

The monotone rewriting entails that
if $f(x)$ is ``small'' then $W_\theta^f(x)\in\R$ is
``large'' and vice versa.  The quantitative meaning of ``small'' or ``large''
depends on $\theta\in \Theta$. To obtain the value of $W_\theta^ f(x)$ we compare 
$f(x)$ to the quantiles
of $f$ under the current distribution, as
measured by the
$P_\theta$-level fraction in which the value of $f(x)$ lies.

\begin{defi}
\label{def:quantiles}
The lower and upper $P_\theta$-$f$-levels of $x\in X$ are defined as
\begin{equation}
\label{eq:quantiles}
\begin{aligned}
q_\theta^<(x) &= \Pr\nolimits_{x'\sim P_\theta}(f(x')<f(x))
\\
q_\theta^\leq(x) &= \Pr\nolimits_{x'\sim P_\theta}(f(x')\leq f(x))\enspace.
\end{aligned}
\end{equation}

Let $w:[0;1]\to \R$ be a non-increasing function, the \emph{selection
scheme}.

The transform $W_\theta^f(x)$ of an objective function $f:X\to \R$ is defined as 
a function of the $P_\theta$-$f$-level of $x$ as
\begin{equation}\label{eq:f-replacement}
W_\theta^f(x) =
\begin{cases}
w(q_\theta^\leq(x))
&\text{if }
q_\theta^\leq(x)=q_\theta^<(x),
\\
\frac{1}{q_\theta^\leq(x)-q_\theta^<(x)}
\int_
{q=q_\theta^<(x)}
^
{q=q_\theta^\leq(x)} w(q)\,\d q
&\text{otherwise.}
\end{cases}
\end{equation}
\end{defi}

The \del{distribution}\new{level} functions \del{$q:\R\to[0,
1]$}\new{$q:X\to [0,1]$} reflect the probability to sample a better value than $f(x)$. They are monotone in $f$ (if $f(x_{1})
\leq f(x_{2})$ then $q_{\theta}^<(x_{1}) \leq q_{\theta}^<(x_{2})$, and
likewise for $q^\leq$) and invariant under
strictly increasing transformations of $f$.

A typical choice for $w$ is $w(q)=\mathbbm{1}_{q\leq q_0}$ for some fixed
value $q_0$, the \emph{selection quantile}. In what follows, we suppose
that a selection scheme \new{(weighting scheme) $w$} has been chosen once and for all.

As desired, the definition of $W_{\theta}^f$ is invariant under a
strictly increasing transformation of $f$.
For instance, the $P_\theta$-median of $f$ gets remapped to $w(\frac12)$.

Note that $\E_{x\sim P_\theta} W_\theta^f(x)$ is always equal to
$\int_0^1 w$, independently of $f$ and
$\theta$: indeed, by definition,
the $P_\theta$ inverse-quantile of a random point under $P_\theta$ is uniformly distributed
in $[0;1]$. In the following, our objective will be to maximize the
expected value of $W_\thetat^f$ over $\theta$, that is, to maximize  
\begin{equation}\label{eq:objective}
\E_{P_\theta} W_\thetat^f = \int W_{\theta^t}^f(x)\, P_{\theta}(\d x)
\enspace
\end{equation}
over $\theta$,
where $\thetat$ is fixed at a given step but will adapt over time.

Importantly, $W_\theta^f(x)$ can be estimated in practice: indeed, the 
$P_\theta$-$f$-levels $\Pr_{x'\sim P_\theta}(f(x')<f(x))$ can be estimated by taking
samples of $P_\theta$ and ordering the samples according to the value of
$f$ (see below). The estimate remains invariant under strictly increasing
$f$-transformations.

\subsubsection{The IGO Gradient Flow} At the most abstract level,
IGO is a continuous-time gradient flow in the parameter space $\Theta$,
which we define now. In practice, discrete time steps (a.k.a.\ iterations) are
used, and $P_\theta$-integrals are approximated through sampling, as 
described in the next section.


Let $\theta^t$ be the current value of the parameter at time $t$, and let
$\deltat\ll 1$. We define $\theta^{t+\deltat}$ in such a way as to
increase the $P_\theta$-weight of points where $f$ is small, while not
going too far from $P_{\theta^t}$ in Kullback--Leibler divergence. We use
the adaptive weights
$W_{\theta^t}^f$ as a way to measure which points have large or small
values. In accordance with \eqref{eq:objective}, this suggests taking the gradient ascent
\begin{equation}\label{eq:IGOexactupdate}
\theta^{t+\deltat}=\theta^t+\deltat \, \widetilde\nabla_{\!\theta} \int
W_{\theta^t}^f(x)\, P_{\theta}(\d x)
\end{equation}
where the natural gradient is suggested by Proposition~\ref{prop:KLuphill}.

Note again that we use $W_{\theta^t}^f$ and not $W_{\theta}^f$ in the
integral. So the gradient $\widetilde\nabla_{\!\theta}$ does not act on
the adaptive objective $W_{\theta^t}^f$. If we used
$W_{\theta}^f$ instead, we would face a paradox: right after a move,
previously good points do not seem so good any more since the
distribution has improved.  More precisely, $\int
W_{\theta}^f(x)\, P_{\theta}(\d x)$ is constant and always equal to the
average weight $\int_0^1 w$, and so the gradient would always vanish.

Using the log-likelihood trick $\widetilde\nabla P_\theta=P_\theta\,
\widetilde\nabla \!\ln P_\theta$ (assuming $P_\theta$ is smooth), we get an
equivalent expression of the update above as an integral under the
current distribution $P_{\theta^t}$; this is important for practical
implementation.
This leads to the following definition.

\begin{defi}[IGO flow]
\label{def:IGOflow}
The IGO flow is the set of continuous-time trajectories in space
$\Theta$, defined by the ordinary differential equation
\begin{align}\label{eq:IGOflow}
\frac{\d \theta^t}{\d t}
&=\widetilde\nabla_{\!\theta} \int W_{\theta^t}^f(x)\, P_{\theta}(x) \d x
\\\nonumber&=
\int
W_{\theta^t}^f(x)\,
\,\frac{\widetilde\nabla_{\!\theta} P_\theta(x)}{P_{\theta^t}(x)} P_{\theta^t}(x) \d x
\\&=
\label{eq:IGOsemiexplicit}
\int
W_{\theta^t}^f(x)\,
\widetilde\nabla_{\!\theta} \ln P_\theta(x) 
\,P_{\theta^t}(\d x)
\\&=\label{eq:IGOexplicit}
I^{-1}(\theta^t)\,
\int W_{\theta^t}^f(x)\,\frac{\partial \ln P_\theta(x)}{\partial
\theta} 
  \,P_{\theta^t}(\d x)\enspace.
\end{align}
where the gradients are taken at point $\theta=\theta^t$, and $I$ is the
Fisher information matrix.
\end{defi}

Natural evolution
strategies (NES, \citealt{Wierstra2008,Glasmachers2010,Sun:2009:ENE:1569901.1569976,wierstra2014natural}) feature
a related gradient \emph{descent} with $f(x)$ instead of $W_{\theta^t}^f(x)$.
The associated flow would read
\begin{equation}\label{eq:IGOflowf}
\frac{\d \theta^t}{\d t}=-\widetilde\nabla_{\!\theta} 
\int f(x)\, P_{\theta}(\d x) \enspace,
\end{equation}
where the gradient is taken at $\theta^{t}$ (in the sequel when not
explicitly stated, gradients in $\theta$ are taken at $\theta=\theta^{t}$).
However, in the end NESs always implement algorithms using
sample quantiles (via ``nonlinear \textit{fitness shaping}''), 
as if derived from the gradient ascent of $W_{\theta^t}^f(x)$.

The update
\eqref{eq:IGOsemiexplicit} is a weighted average of
``intrinsic moves'' increasing the log-likelihood of some points. We can
slightly rearrange the update as
\begin{align}
\frac{\d\theta^t}{\d t}
&=
\int \overbrace{W_{\theta^t}^f(x)}^{\back\text{preference
weight~~~}\back}\, \underbrace{\gradt \ln
P_\theta(x)}_{\back\text{intrinsic move to reinforce $x$
}\back} 
\,\overbrace{P_{\theta^t} (\d x)}^{\back\text{~~~current sample distribution}\back}\\
&=
\gradt  \int \underbrace{W_{\theta^t}^f(x)\ln
P_\theta(x)}_{\back\text{weighted log-likelihood}\back} 
\,P_{\theta^t} (\d x)\enspace,
\end{align}
which provides an interpretation for the IGO gradient
flow as a gradient ascent optimization of the weighted log-likelihood of the ``good
points'' of the current distribution. In the sense of Theorem~\ref{thm:IGOML} below,
IGO is in fact the ``best'' way to increase this log-likelihood.


For exponential families of probability distributions, we will see later
that the IGO flow rewrites as a nice derivative-free expression
\eqref{eq:expIGO}.

\subsubsection{IGO Algorithms: Time Discretization and Sampling} The
above is a mathematically well-defined continuous-time flow in
parameter space.  Its practical implementation involves three
approximations depending on two parameters $N$ and $\deltat$:
\begin{itemize}
\item the integral under $P_{\theta^t}$ is approximated using $N$ samples
taken from $P_{\theta^t}$; 
\item the value $W_{\theta^t}^f$ is approximated for each sample taken from $P_{\theta^t}$;
\item the time derivative $\frac{\d \theta^t}{\d t}$ is approximated
by a $\deltat$ time increment\new{: instead of the continuous-time IGO flow
\eqref{eq:IGOflow} we use its Euler approximation scheme
$\theta^{t+\deltat}\approx \theta^t+\deltat \frac{\d \theta^t}{\d t}$, so
that
the time $t$ of the flow is discretized with a step size
$\deltat$, which thus becomes the learning rate of the algorithm. (See Corollary~\ref{cor:KLspeed} for an interpretation of
$\deltat$ as a number of bits of information introduced in the
distribution $P_{\theta^t}$ at each step.})
\end{itemize}

We also assume that the Fisher information matrix $I(\theta)$ and
$\frac{\partial \ln P_\theta(x)}{\partial \theta}$ can be computed (see
discussion below if $I(\theta)$ is unknown).

\newcommand{\rk}{\mathrm{rk}}

At each step, we draw $N$ samples $x_1,\ldots,x_N$ under $P_{\theta^t}$.
To approximate $W_{\theta^t}^f$, we rank the samples
according to the value of $f$.
Define $\rk (x_i)= \#\{j\,| \, f(x_j)< f(x_i)\}$ and let the estimated
weight of sample $x_i$ be
\begin{equation}\label{eq:wi}
   \wi=\frac1N\,w\left(\frac{\rk (x_i) + 1/2 }{N}\right),
\end{equation}
using the non-increasing selection scheme function $w$
introduced in Definition~\ref{def:quantiles} above.
(This is assuming there are no ties in our sample; in case several sample
points have the same value of $f$, we define $\wi$ by averaging the
above
over all possible rankings of the
ties\footnote{A mathematically neater but less intuitive version would be
\begin{equation}
\label{eq:tie-break}
\wi=\frac1{\rk^\leq
(x_i)-\rk^<(x_i)}\int_{u=\rk^<(x_i)/N}^{u=\rk^\leq(x_i)/N} w(u)\d u
\end{equation}
with $\rk^<(x_i)=\#\{j \,| \, f(x_j)< f(x_i)\}$ and $\rk^\leq(x_i)=\#\{j
\,|\,
f(x_j)\leq f(x_i)\}$.
}.)
%

Then we can approximate the IGO flow as follows.

\begin{defi}[IGO algorithms]
\label{def:IGOalgo}
The \emph{IGO algorithm} associated with parametrization $\theta$, sample size $N$ and step size $\deltat$
is the following update rule for the parameter $\theta^t$.
At each step, $N$ sample points $x_1,\ldots,x_N$ are drawn according to
the distribution $P_{\theta^t}$. The parameter is updated according to
\begin{align}
\theta^{t+\deltat}
&=
\theta^t+\deltat \sum_{i=1}^{N} \wi \, \left.\widetilde\nabla_\theta \ln P_\theta(x_i)\right|_{\theta=\theta^t}
\label{eq:intrinsicIGOupdate}
\\&=
\theta^t+\deltat \, I^{-1}(\theta^t)\,
\sum_{i=1}^{N} \wi \, \left.\frac{\partial \ln P_\theta(x_i)}{\partial
\theta}\right|_{\theta=\theta^t}
\label{eq:IGOupdate}
\end{align}
where $\wi$ is the weight \eqref{eq:wi} obtained from the ranked values of the
objective function $f$.
\end{defi}

Equivalently one can fix the weights
$w_i=\frac1N\,w\left(\frac{i-1/2}{N}\right)$ once and for all and rewrite
the update as
\begin{equation}\label{eq:IGOupdatebis}
\theta^{t+\deltat}=\theta^t+\deltat \, I^{-1}(\theta^t)\,
\sum_{i=1}^{N} w_i \, \left.\frac{\partial \ln P_\theta(x_{i:N})}{\partial
\theta}\right|_{\theta=\theta^t}
\end{equation}
where $x_{i:N}$ denotes the sampled point ranked \new{$i^\text{th}$} according
to $f$, i.e. $f(x_{1:N}) < \ldots < f(x_{N:N})$ (assuming again
there are no ties). Note that $\{x_{i:N}\} = \{x_i\}$ and $\{w_i\} = \{\wi\}$.

As will be discussed in Section~\ref{sec:CMA-PBIL-IGO}, this update
applied to multivariate normal distributions or Bernoulli measures allows
us
to neatly recover versions of some well-established algorithms, in
particular CMA-ES and PBIL. Actually, in the Gaussian context 
updates of the form \eqref{eq:IGOupdate} have already been introduced
\citep{Glasmachers2010,Akimoto:ppsn2010}, though not formally derived from
a continuous-time flow with inverse quantiles.
 
When $N\to\infty$, the IGO algorithm using samples approximates
the continuous-time IGO gradient flow, see Theorem~\ref{thm:consistency}
below. Indeed, the IGO algorithm, with
$N=\infty$, is simply the
Euler approximation scheme for the ordinary differential equation
defining the IGO flow \eqref{eq:IGOflow}. The latter result thus provides a sound mathematical
basis for currently used rank-based updates.

\subsubsection{IGO Flow versus IGO Algorithms} 
The IGO \emph{flow}
\eqref{eq:IGOflow} is a well-defined continuous-time set of
trajectories in the space
of probability distributions $P_\theta$, depending only on the objective
function $f$ and the chosen family of distributions. It
does not depend on the
chosen parametrization for $\theta$ (Proposition~\ref{prop:thetainv}).

On the other hand, there are several IGO \emph{algorithms} associated
with this flow. Each IGO algorithm approximates the IGO
flow in a slightly different way. An IGO algorithm depends on three further choices:
a sample size $N$, a time discretization step size $\deltat$, and a
choice of parametrization for $\theta$ in which to
implement~\eqref{eq:IGOupdate}. 

If $\deltat$ is small enough, and $N$ large enough, the influence of the
parametrization $\theta$ disappears and all IGO algorithms are
approximations of the ``ideal'' IGO flow trajectory. 
However, the larger $\deltat$, the poorer the approximation gets.

So for large
$\deltat$, different IGO algorithms for the same IGO flow may exhibit
different behaviors. We will see an instance of this phenomenon for
Gaussian distributions: both CMA-ES and the maximum likelihood
update (EMNA) can be seen as IGO algorithms, but the latter with
$\deltat=1$ is known to exhibit premature loss of diversity
(Section~\ref{sec:gaussian}).

Still, if $\deltat$ is sufficiently small, 
two IGO algorithms for the same IGO flow will differ less from
each other than from a non-IGO algorithm: at each step the difference is
only
$O(\deltat^2)$ (Appendix~\ref{sec:invariance}). On the other hand, for instance, the difference between an IGO algorithm and
the vanilla gradient ascent is, generally, not smaller than $O(\deltat)$ at each step, i.e., it can be roughly as big as the
steps themselves.

\subsubsection{Unknown Fisher Matrix} The \label{unknownfisher} algorithm presented so far assumes
that the Fisher matrix $I(\theta)$ is known as a function of $\theta$.
This is the case for Gaussian \new{or}\del{distributions in CMA-ES and for} Bernoulli
distributions. However, for restricted Boltzmann machines as considered
below, no analytical form is known. Yet, provided the quantity
$\frac{\partial}{\partial \theta} \ln P_\theta(x)$ can be computed or
approximated, it is possible to approximate the integral
\[
I_{ij}(\theta)=\int_x \frac{\partial \ln P_\theta(x)}{\partial
\theta_i}\frac{\partial \ln P_\theta(x)}{\partial \theta_j}\,
P_\theta(\d x)
\]
using $P_\theta$-Monte Carlo samples for $x$. These samples may or may not be the
same as those used in the IGO update \eqref{eq:IGOupdate}: in particular,
it is possible to use as many Monte Carlo samples as necessary to
approximate $I_{ij}$, at no additional cost in terms of the
number of calls to the black-box function $f$ to optimize.

Note that each Monte Carlo sample $x$ will contribute
$\frac{\partial \ln P_\theta(x)}{\partial
\theta_i}\frac{\partial \ln P_\theta(x)}{\partial \theta_j}$ to the
Fisher matrix approximation. This is a rank-$1$ non-negative
matrix\footnote{The alternative, equivalent formula
$I_{ij}(\theta)=-\int_x \frac{\partial^2
\ln P_\theta(x)}{\partial\theta_i \, \partial \theta_j} \,
P_\theta(\d x)$ for the Fisher matrix will not necessarily yield
non-negative matrices through Monte Carlo sampling.}. So, for the
approximated Fisher matrix to be invertible, the number of
(distinct) samples $x$ needs to be at least equal to, and ideally
much larger than, the number of
components of the parameter $\theta$: $N_\text{Fisher}\geq \dim \Theta$.

For exponential families of distributions, the IGO update has a
particular form \eqref{eq:expIGO} which simplifies this matter somewhat
(Section~\ref{sec:exp}).
In Appendix~\ref{sec:RBM} this is examplified using restricted
Boltzmann machines.

\section{First Properties of IGO}
\label{sec:properties}
\new{In this section we derive some basic properties of IGO and present the IGO flow for exponential families. }
\subsection{Consistency of Sampling}

The first property to check is that
when $N\to\infty$, the update rule using $N$ samples converges to the IGO
update rule. This is \emph{not} a straightforward application of the law
of large numbers, because the estimated weights $\wi$ depend
(non-continuously) on the \emph{whole} sample $x_1,\ldots,x_N$, and not only on
$x_i$.

\begin{thm}[Consistency]
\label{thm:consistency}
When $N\to\infty$, the $N$-sample IGO update rule \eqref{eq:IGOupdate}:
\[
\theta^{t+\deltat}=\theta^t+\deltat \, I^{-1}(\theta^t)\,
\sum_{i=1}^{N} \wi \, \left.\frac{\partial \ln
P_\theta(x_i)}{\partial
\theta}\right|_{\theta=\theta^t}
\]
converges with probability $1$ to the update rule
\eqref{eq:IGOexactupdate}:
\[
\theta^{t+\deltat}=\theta^t+\deltat \, \widetilde\nabla_{\!\theta} \int
W_{\theta^t}^f(x)\, P_{\theta}(\d x)\enspace.
\]
\end{thm}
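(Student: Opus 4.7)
The plan is to couple the sample-based weighted sum with an idealized version whose weights depend on each $x_i$ individually. Writing $T(x) := \left.\frac{\partial \ln P_\theta(x)}{\partial \theta}\right|_{\theta=\thetat}$, I would decompose
\[
\sum_{i=1}^N \wi\, T(x_i) \;=\; \underbrace{\frac{1}{N}\sum_{i=1}^N W_{\thetat}^f(x_i)\, T(x_i)}_{\widetilde S_N} \;+\; \underbrace{\sum_{i=1}^N \Bigl(\wi - \tfrac{1}{N}W_{\thetat}^f(x_i)\Bigr)\, T(x_i)}_{R_N}.
\]
The first term $\widetilde S_N$ is a standard i.i.d.\ average of $W_{\thetat}^f(x_i)\, T(x_i)$, with $W_{\thetat}^f$ bounded by $\|w\|_\infty$ and $T$ assumed $P_{\thetat}$-integrable (the usual regularity needed to make the Fisher matrix and the IGO flow well-defined). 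The strong law of large numbers thus gives $\widetilde S_N \to \int W_{\thetat}^f(x)\, T(x)\, P_{\thetat}(\d x)$ almost surely, and by \eqref{eq:IGOexplicit} this integral, left-multiplied by the (sample-independent) constant $\deltat\, I^{-1}(\thetat)$, is exactly the right-hand side of \eqref{eq:IGOexactupdate}.

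The real work is showing $R_N \to 0$ almost surely. The key input is Glivenko--Cantelli applied to the pushforward $f_\ast P_{\thetat}$: the empirical CDF $F_N(y) = \tfrac{1}{N}\#\{j : f(x_j) \leq y\}$ converges uniformly to $F(y) = \Pr_{x\sim P_{\thetat}}(f(x)\leq y)$. Since $\rk(x_i)/N = F_N(f(x_i)^-)$ while $q_{\thetat}^-(x_i) = F(f(x_i)^-)$, one obtains the uniform control
\[
\sup_{1\leq i\leq N}\Bigl|\tfrac{\rk(x_i)+1/2}{N} - q_{\thetat}^-(x_i)\Bigr| \;\longrightarrow\; 0 \quad \text{a.s.}
\]
When $f_\ast P_{\thetat}$ is atomless, $q_{\thetat}^-(x_i) = q_{\thetat}^+(x_i)$ almost surely, so $W_{\thetat}^f(x_i) = w(q_{\thetat}^-(x_i))$; at every continuity point of $w$ this forces $N\wi \to W_{\thetat}^f(x_i)$ pointwise.

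The delicate point, and the main obstacle, is that natural choices of $w$---notably the truncation weight $w(q)=\1_{q\leq q_0}$---are discontinuous. I would handle this by splitting the sample into two pieces using a small $\eps>0$: on the complement of the band $B_\eps := \{x : q_{\thetat}^-(x) \in [q_0-\eps,\,q_0+\eps]\}$ (or unions of such bands around all jumps of $w$), the uniform rank bound combined with the continuity of $w$ off its jump set gives $|N\wi - W_{\thetat}^f(x_i)| \to 0$ uniformly, so the contribution to $R_N$ is $o(1)$ after multiplying by the average of $\|T(x_i)\|$, which is bounded a.s.\ by SLLN. On the band $B_\eps$, the fraction of sampled $x_i$ is at most $2\eps + o(1)$ by Glivenko--Cantelli, each contributing at most $\|w\|_\infty\|T(x_i)\|/N$, so the total is bounded by approximately $2\|w\|_\infty\,\eps\, \E_{P_{\thetat}}\|T\| + o(1)$. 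Sending $N\to\infty$ first and then $\eps\to 0$ yields $R_N \to 0$ a.s. Ties in the $f(x_i)$ occur with probability zero under atomless $f_\ast P_{\thetat}$, so the averaging convention in the definition of $\wi$ does not affect the limit, and multiplying by $\deltat\, I^{-1}(\thetat)$ concludes.
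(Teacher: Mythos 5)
Your top-level strategy is the same as the paper's: split $\sum_i \wi T(x_i)$ into the i.i.d.\ average $\frac1N\sum_i W_{\thetat}^f(x_i)T(x_i)$, handled by the strong law of large numbers, plus a remainder controlled via the Glivenko--Cantelli uniform convergence of empirical ranks to the quantiles $q_{\thetat}^\pm$. The difference, and the genuine gap, is in how you kill the remainder. Your argument explicitly assumes that $f_\ast P_{\thetat}$ is atomless: you use it twice, once to identify $W_{\thetat}^f(x_i)=w(q_{\thetat}^-(x_i))$ (i.e.\ $q^-=q^+$ a.s.) and once to dismiss ties among the $f(x_i)$. But the theorem makes no such assumption, and it must not: the central applications of the statement in this paper are discrete search spaces --- Bernoulli measures on $\{0,1\}^d$ (PBIL) and restricted Boltzmann machines --- where $f$ takes finitely many values, ties occur with probability tending to one, and $q_{\thetat}^-\neq q_{\thetat}^+$ on sets of positive probability. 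In that regime your pointwise statement ``$N\wi\to W_{\thetat}^f(x_i)$'' is not even the right target without invoking the tie-averaging in the definition of $\wi$ and the interval-averaged definition \eqref{eq:f-replacement} of $W_{\thetat}^f$, and your band-splitting argument does not address it. The paper's proof handles exactly this case: after the same SLLN split, it bounds $\frac1N\sum_i(\widehat W^f(x_i)-W_{\thetat}^f(x_i))^2$ via Cauchy--Schwarz (using $\E\abs{\partial_\theta\ln P_\theta}^2<\infty$) and then compares, term by term, the tie-averaged empirical weights with the interval averages of $w$ over $[q_i^-,q_i^+]$, using only monotonicity and boundedness of $w$ together with the uniform Glivenko--Cantelli error $\eps$; this yields the bound $(2\eps+3/N)B$ with no continuity of $w$ and no atomlessness of $f_\ast P_{\thetat}$.

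A secondary weakness: your treatment of the jump set of $w$ (``unions of such bands around all jumps'') is only sketched. A bounded monotone $w$ can have countably many, possibly dense, discontinuities, so you would need to isolate the finitely many jumps of size exceeding a threshold and budget the rest by the total variation of $w$; this is fixable, but the paper's monotone Riemann-sum comparison sidesteps it entirely. As written, your proof establishes the theorem only under the extra hypotheses that $f_\ast P_{\thetat}$ is atomless and (effectively) that $w$ has finitely many jumps, which is strictly weaker than the stated result.
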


The proof is given in Appendix~\ref{sec:proofs}, under mild regularity assumptions. In
particular we do not require that $w$ is continuous.  Unfortunately, the
proof does not provide an explicit sample size above which the IGO
algorithm would be guaranteed to stay close to the IGO flow with high
probability; presumably such a size would be larger than the typical
sample sizes used in practice.

This theorem may clarify previous claims
\citep{Wierstra2008,Akimoto:ppsn2010} where rank-based updates similar to
\eqref{eq:IGOexactupdate}, such as in NES or CMA-ES, were derived from
optimizing the expected value $-\E_{P_\theta} f$. The rank-based weights
\wi\ were then introduced \emph{after} the derivation as a 
useful heuristic to improve stability.
Theorem~\ref{thm:consistency} shows that, for large $N$, CMA-ES and NES
actually follow the gradient flow of the quantity
$\E_{P_\theta}W_\thetat^f$: the update can be rigorously derived from
optimizing the expected value of the inverse-quantile-rewriting $W_\thetat^f$.

\subsection{Monotonicity: Quantile Improvement}

Gradient descents come with a guarantee that the fitness value decreases
over time. Here, since we work with probability distributions on $X$, we
need to define a ``fitness'' of the distribution $P_{\theta^t}$. An obvious
choice is the expectation $\E_{P_{\theta^t}}f$, but it is not
invariant under $f$-transformation and moreover may be sensitive to
extreme values.

It turns out that the monotonicity properties of the IGO gradient flow
depend on the choice of the selection scheme $w$. For instance, if
$w(u)=\mathbbm{1}_{u\leq 1/2}$, then the median of $f$ under
$P_{\theta^t}$ improves over
time.

\begin{prop}[Quantile improvement]
\label{prop:qimprove}
Consider the IGO flow~\eqref{eq:IGOflow}, with
the weight $w(u)=\mathbbm{1}_{u\leq q}$ where $0<
q< 1$ is fixed.
Then the value of the $q$-quantile of $f$ improves over time: if
$t_1\leq t_2$ then $Q^q_{P_{\theta^{t_2}}}(f) \leq
Q^q_{P_{\theta^{t_1}}}(f)$. 
Here the $q$-quantile value $Q^q_P(f)$ of $f$ under a
probability distribution $P$ is defined as the largest number $m$ such that
$\Pr_{x\sim P} (f(x)\!\geq\! m)\geq
1-q$.

Assume moreover that the objective function $f$ has no plateau, i.e.\ for any
$v\in \R$ and any $\theta\in \Theta$ we have $\Pr_{x\sim
P_\theta}(f(x)\!=\!v)=0$. Then  for $t_1<t_2$ either
$\theta^{t_1}=\theta^{t_2}$ or
$Q^q_{P_{\theta^{t_2}}}(f) <
Q^q_{P_{\theta^{t_1}}}(f)$.
\end{prop}

The proof is given in Appendix~\ref{sec:proofs}, together with the necessary
regularity assumptions. Note that on a discrete search space, the
objective function has only plateaus, and the $q$-quantile will
evolve by successive jumps even as $\theta$ evolves continuously.

This property is proved here only for the IGO gradient
flow~\eqref{eq:IGOflow} with
$N=\infty$ and $\deltat\to 0$. For an IGO algorithm with finite $N$, the dynamics is random
and one cannot expect monotonicity. Still,
Theorem~\ref{thm:consistency} ensures that, with high probability,
trajectories of a large enough finite population stay close to the
infinite-population limit trajectory.

In \citet{quantile_igo} this result was extended to\del{ actual} finite time
steps instead of infinitesimal $\deltat$, using the IGO-ML framework from
Section~\ref{sec:IGOML} below.

\subsection{The IGO Flow for Exponential Families}
\label{sec:exp}

The expressions for the
IGO update
simplify somewhat if the family $P_\theta$ happens to be an exponential family of
probability distributions (see also
\citealt{MalagoGECCO2008InformationGeometryPerspectiveOfEDAs,
malago2011geometry} for optimization using the natural gradient for
exponential families). This covers, for instance, Gaussian or
Bernoulli distributions.

Suppose that
$P_\theta$ can be written as \[
P_\theta(\d x)=\frac1{Z(\theta)}\exp\left(\sum \theta_i
T_i(x)\right)\,H(\d x)
\] where $T_1,\ldots,T_k$
is a finite family of functions
on $X$, $H(\d x)$ is an arbitrary reference measure on $X$, and
$Z(\theta)$ is the normalization constant. It is
well-known \cite[(2.33)]{Amari2000book} that
\begin{equation}
\label{eq:gradexp}
\frac{\partial\ln P_\theta(x)}{\partial \theta_i}=T_i(x)-\E_{P_\theta}
T_i
\end{equation}
so that \cite[(3.59)]{Amari2000book} 
\begin{equation}
\label{eq:fishexp}
I_{ij}(\theta)=\Cov_{P_\theta}(T_i,T_j)\enspace.
\end{equation}

By plugging this into the definition of the IGO
flow~\eqref{eq:IGOexplicit} we find:

\begin{prop}
\label{prop:expIGO}
Let $P_\theta$ be an exponential family parametrized by the natural
parameters $\theta$ as above.
Then the IGO flow is given by
\begin{equation}
\label{eq:expIGO}
\frac{\d \theta}{\d t}=\Cov_{P_\theta}(T,T)^{-1}
\Cov_{P_\theta}(T,W_\theta^f)
\end{equation}
where $\Cov_{P_\theta}(T,W_\theta^f)$ denotes the vector
$(\Cov_{P_\theta}(T_i,W_\theta^f))_i$, and $\Cov_{P_\theta}(T,T)$ the
matrix $(\Cov_{P_\theta}(T_i,T_j))_{ij}$.
\end{prop}

Note that the right-hand side does not involve derivatives w.r.t.\
$\theta$ any more.
This result makes it easy to simulate the IGO flow using, e.g., a Gibbs
sampler for $P_\theta$: both covariances in~\eqref{eq:expIGO} may be
approximated by sampling, so that neither the Fisher matrix nor the
gradient term need to be known in advance, and no derivatives are
involved. 


The values of the variables $\bar T_i=\E T_i$, namely the expected value of $T_i$
under the current distribution, can often be used as an alternative
parametrization for an exponential family (e.g.\ for a
one-dimensional Gaussian, these are the mean $\mu$ and the second moment
$\mu^2+\sigma^2$). The IGO flow \eqref{eq:IGOsemiexplicit} may be
rewritten using these variables,
using the relation
$\widetilde \nabla_{\theta_i}=\frac{\partial}{\rule{0cm}{1.8ex}\partial
\bar T_i}$ for the
natural gradient of exponential families (Proposition~\ref{prop:dualgrad}
in Appendix~\ref{sec:proofs}). One finds: 

\begin{prop}
With the same setting as in Proposition~\ref{prop:expIGO},
the expectation variables $\bar T_i=\E_{P_\theta} T_i$ 
satisfy the following evolution equation under the IGO flow
\begin{equation}
\label{eq:IGOEPflow}
\frac{\d \bar T_i}{\d t}=\Cov(T_i, W_\theta^f)
=\E (T_i \,W_\theta^f)-\bar T_i \,\E W_\theta^f\enspace.
\end{equation}
\end{prop}

The proof is given in Appendix~\ref{sec:proofs}, in the proof of
Theorem~\ref{thm:IGOCEM}. We shall further exploit this fact in
Section~\ref{sec:IGOML}.

\subsubsection{Exponential Families with Latent Variables.}
Similar formulas hold when the distribution $P_\theta(x)$ is the marginal
of an exponential distribution $P_\theta(x,h)$ over a ``hidden'' or
``latent'' variable
$h$, such as the restricted Boltzmann machines of Appendix~\ref{sec:RBM}.

Namely, with
$P_\theta(x)=\frac1{Z(\theta)} \sum_h \exp(\sum_i \theta_i
T_i(x,h))\,H(\d x,\d h)$ we have
\begin{equation}
\label{eq:gradhexp}
\frac{\partial \ln P_\theta(x)}{\partial \theta_i}
=U_i(x)-\E_{P_\theta} U_i
\end{equation}
where
\begin{equation*}
U_i(x)=\E_{P_\theta} (T_i(x,h)|x)
\end{equation*}
is the expectation of
$T_i(x,h)$ knowing $x$.
Then
the Fisher matrix is
\begin{equation}
\label{eq:fishhexp}
I_{ij}(\theta)=\Cov_{P_\theta}(U_i,U_j)
\end{equation}
and consequently,
the IGO flow takes the form
\begin{equation}
\label{eq:hexpIGO}
\frac{\d \theta}{\d t}=\Cov_{P_\theta}(U,U)^{-1}
\Cov_{P_\theta}(U,W_\theta^f)\enspace.
\end{equation}

\subsection{Further Mathematical Properties of IGO}

IGO enjoys a number of other mathematical properties that are expanded
upon in Appendix~\ref{sec:moremath}.

\begin{itemize}
\item By its very construction, the IGO flow is invariant under a number
of transformations of the original problem. First, replacing the
objective function $f$ with a strictly increasing function of $f$ does
not change the IGO flow (Proposition~\ref{prop:finv} in
Appendix~\ref{sec:invariance}).

Second, changing the parameterization
$\theta$ used for the family $P_\theta$ (e.g., letting $\theta$ be a
variance or a standard deviation) results
in unchanged trajectories for the distributions $P_\theta$ under the IGO
flow (Proposition~\ref{prop:thetainv} in Appendix~\ref{sec:invariance}). 

Finally, the IGO flow is insensitive to transformations of the original problem by
a change of variable in the search space $X$ itself, provided this
transformation can be reflected in the family of distributions $P_\theta$
(Proposition~\ref{prop:Xinv} in Appendix~\ref{sec:invariance}); this
covers, for instance, optimizing $f(Ax)$ instead of $f(x)$ in $\R^d$
using Gaussian distributions,
where $A$ is any invertible matrix.

These latter two invariances are specifically due to the natural
gradient and are not satisfied by a vanilla
gradient descent. $f$-invariance and $X$-invariance are directly
inherited from the IGO flow by IGO algorithms, but this is only approximately true of
$\theta$-invariance, as discussed in Appendix~\ref{sec:invariance}.

\item The speed of the IGO flow is bounded by the variance of the weight
function $w$ on $[0;1]$ (Appendix~\ref{sec:speed},
Proposition~\ref{prop:speed}). This implies that the parameter $\deltat$
in the IGO flow is not a meaningless variable but is related to the
maximal number of bits introduced in $P_\theta$ at each step
(Appendix~\ref{sec:speed}, Corollary~\ref{cor:KLspeed}).

\item IGO algorithms still make sense when the objective function $f$ is
noisy, that is, when each call to $f$ returns a non-deterministic value.
This can be accounted for without changing the framework: we prove in
Proposition~\ref{prop:noisyIGO} (Appendix~\ref{sec:noisy}) that IGO for
noisy $f$ is equivalent to IGO for a non-noisy $\tilde f$ defined on a
larger space $X\times \Omega$ with distributions $\tilde P_\theta$ that
are uniform over
$\Omega$. Consequently, theorems such as consistency of sampling
immediately transfer to the noisy case.

\item The IGO flow can be computed explicitly in the simple case of linear
functions on $\R^d$ using Gaussian distributions, and its
convergence can be proven for linear functions on  $\{0,1\}^d$ with Bernoulli distributions 
(Appendix~\ref{sec:examples}).

\end{itemize}

\subsection{Implementation Remarks}
\label{sec:impl}
\new{We conclude this section by a few practical remarks when implementing IGO algorithms.}

\subsubsection{Influence of the Selection Scheme $w$}
The selection scheme $w$ directly affects
the update rule~\eqref{eq:IGOupdatebis}.

A natural choice is $w(u)=\mathbbm{1}_{u\leq q}$. This results, as 
shown in Proposition~\ref{prop:qimprove}, in an
improvement of the $q$-quantile over the course of optimization.  Taking $q=1/2$
springs to mind \new{(giving positive weights to the better half of the samples)};
however, this is often not selective enough, and both
theory and experiments confirm that for the Gaussian case,
efficient
optimization requires $q<1/2$ (see Section~\ref{sec:gaussian}).
According to \citet{HGBeyer01},
on the sphere function $f(x) = \sum_i x_i^2$, the optimal $q$ is about $0.27$ if sample size
$N$ is not larger than the search
space dimension $d$, and even smaller otherwise \citep{jebaliaaugerppsn2010}.

Second,
replacing $w$ with $w+c$ for
some constant $c$ clearly has no influence on the IGO continuous-time
flow~\eqref{eq:IGOexactupdate}, since the gradient will cancel out the
constant. However, this is not the case for the update
rule~\eqref{eq:IGOupdatebis} with a finite sample of size $N$. 

Indeed, adding a constant
$c$ to $w$ adds a quantity $c\frac1N \sum \widetilde\nabla_\theta\ln
P_\theta(x_i)$ to the update. In expectation, this quantity vanishes
because
the $P_\theta$-expected value of
$\widetilde\nabla_\theta\ln P_\theta$ is $0$ (because $\int (\widetilde\nabla_\theta\ln
P_\theta)\,P_\theta=\int \widetilde\nabla P_\theta=\widetilde \nabla
1=0$). So adding a constant
to $w$ does not
change the expected value of the update, but it may change, e.g., its
variance. The empirical average of
$\widetilde\nabla_\theta\ln
P_\theta(x_i)$ in the sample will be $O(1/\sqrt{N})$. So translating the
weights results in a $O(1/\sqrt{N})$ change in the update. See also
Section~4 in \citet{Sun:2009:ENE:1569901.1569976}.

Thus, one may be tempted to introduce a well chosen value of $c$ so as to
reduce the variance of the update. However, determining an optimal value
for $c$ is difficult: the optimal value minimizing the variance actually
depends on possible correlations between $\widetilde\nabla_\theta\ln
P_\theta$ and the function $f$.  The only general result is that one
should shift $w$ such that $0$ lies within its range. Assuming
independence, or dependence with enough symmetry, the optimal shift is
when the weights average to $0$.

\subsubsection{Complexity}
The complexity of the IGO algorithm depends much on the computational
cost model. In optimization, it is fairly common to assume
that the objective function $f$ is very costly compared to any other
calculations performed by the algorithm \citep{more1981testing,dolan2002benchmarking}. 
Then the cost of IGO in terms of
number of $f$-calls is $N$ per iteration, and the cost of using
inverse quantiles and computing the natural gradient is negligible.

Setting the cost of $f$ aside, the complexity of the IGO algorithm depends
mainly on
the computation of the (inverse) Fisher matrix. Assume an analytical
expression for this matrix is known.  Then, with $p=\dim \Theta$ the
number of parameters, the cost of storage of the Fisher matrix is
$O(p^2)$ per iteration, and its inversion typically costs $O(p^3)$ per
iteration. However, depending on the situation and on possible algebraic
simplifications, strategies exist to reduce this cost (e.g.,
\citealt{LeRoux2007topmoumoute} in a learning context). For instance, for
CMA-ES the cost is $O(Np)$ \citep{suttorp2009efficient}. More generally,
parametrization by expectation parameters (see above), when available, may
reduce the cost to $O(Np)$ as well.

If no analytical form of the Fisher matrix is known and Monte Carlo
estimation is required, then complexity depends on the particular situation
at hand and is related to the best sampling strategies available for a
particular family of distributions. For Boltzmann machines, for instance,
a host of such strategies are available \citep{Ackley1985,Salakhutdinov2008, Salakhutdinov2009a,Desjardins2010}. Still, in such a
situation, IGO may be competitive if the objective function $f$ is costly.

\subsubsection{Recycling Old Samples} To compute the ranks of samples in
\eqref{eq:wi}, it might be advisable to re-use samples from previous
iterations, so that a smaller number of samples is necessary, see e.g.\ \citet{Sun:2009:ENE:1569901.1569976}. For $N=1$, this
is indispensable. 
In order to preserve sampling consistency
(Theorem~\ref{thm:consistency}) the old samples need to be reweighted
using the ratio of their likelihood under the current versus old
distribution, as in importance sampling.

\new{
In evolutionary computation,
elitist selection (also called plus-selection) is a common
approach where the all-time best samples are taken into account in each
iteration.  Elitist selection can be modelled in the IGO framework by using
the current all-time best samples in addition to samples from $P_\theta$. 
Specifically, in the ($\mu+\lambda$)-selection scheme, we set
$N=\mu+\lambda$ and let $x_1,\dots, x_\mu$ be the
current all-time $\mu$ best points. 
Then we sample $\lambda$ new points, $x_{\mu+1},\dots, x_N$, from
the current distribution $P_\theta$ and apply \eqref{eq:intrinsicIGOupdate} with $w(q) =
(N/\mu)\1_{q\le\mu/N}$.}

\subsubsection{Initialization}
As with other distribution-based optimization algorithms,
it is usually a good idea to
initialize in such a way as to cover a wide portion of
the search space, i.e.\ $\theta^0$ should be chosen so that
$P_{\theta^0}$ has large diversity. 
For IGO algorithms this is particularly effective, since, as explained
above, the natural gradient provides minimal change of diversity
(greedily at each step) for a given change in the objective function.

\section{IGO, Maximum Likelihood, and the Cross-Entropy Method}
\label{sec:IGOML}
\new{In this section we generalize the IGO update for settings where the natural gradient may not exist. 
This generalization reveals a unique IGO algorithm for finite step-sizes \deltat\ and a natural link to the cross-entropy method. }

\subsection{IGO as a Smooth-time Maximum Likelihood Estimate}
The IGO flow turns out to be
the only way to maximize a \emph{weighted} log-likelihood, where
points of the current distribution are slightly reweighted according to
$f$-preferences.

This relies on the following interpretation of the natural gradient as a
weighted maximum likelihood update with infinitesimal learning rate.
This result singles out, in yet another way, the \emph{natural} gradient among all
possible gradients.
The proof is given in Appendix~\ref{sec:proofs}.

\begin{thm}[Natural gradient as ML with infinitesimal weights]
\label{thm:IGOML}
Let $\eps>0$ and $\theta_0\in \Theta$. Let $W(x)$ be a function of $x$
and let $\theta$ be the solution of
\begin{equation}
\theta=\argmax_\theta\Bigg\{
(1-\eps) {\underbrace{{\int \ln P_\theta(x) \, P_{\theta_0}(\d
x)}}_\text{\back
  $= \mathrm{const} - \mathrm{KL}(P_{\theta_0}\|P_\theta)$, maximal for $\theta=\theta_0$\back\back}}
+
\eps \int \ln P_\theta(x) \, \overbrace{W(x)}^\text{\back\back\back~~~ preference weight biasing $P_{\theta_0}$\back\back} \,P_{\theta_0}(\d x)
\Bigg\}
\enspace.
\end{equation}
Then, when $\eps\to 0$ we have
\begin{equation}
\theta=\theta_0+\eps 
\int \gradt \ln
P_\theta(x) \,\, W(x) \,P_{\theta_0} (\d x) \, + O(\eps^2)
\enspace.
\end{equation}
Likewise for discrete samples: with $x_1,\ldots,x_N\in X$, let $\theta$ be the solution of
\begin{equation}
\theta=\argmax_\theta\left\{
(1-\eps) \int \ln P_\theta(x) \, P_{\theta_0}(\d x)
+\eps \sum_i W(x_i) \,\ln P_\theta(x_i)
\right\}\enspace.
\end{equation}
Then when $\eps\to 0$ we have
\begin{equation}
\theta=\theta_0+\eps \sum_i W(x_i)\,\, \gradt \ln
P_\theta(x_i) +O(\eps^2)
\enspace.
\end{equation}
\end{thm}

So if $W(x)=W_{\theta_0}^f(x)$ is the weight of the points according to
quantilized $f$-preferences, the weighted maximum log-likelihood
necessarily is the IGO flow~\eqref{eq:IGOsemiexplicit} using the
natural gradient---or the IGO update~\eqref{eq:IGOupdate} when using
samples. 

Thus the IGO flow is the unique flow that, continuously
in time, slightly changes the distribution to maximize the log-likelihood
of points with good values of $f$.
(In addition, IGO continuously updates
the weight
$W_{\theta^t}^f(x)$ depending on $f$ and on the current distribution, so
that we keep optimizing.)

This theorem suggests a way to approximate the IGO flow by enforcing this
interpretation for a given non-infinitesimal step size $\deltat$, as
follows.

\begin{defi}[IGO-ML algorithm]
\label{def:IGOML}
The \emph{IGO-ML algorithm} with step size $\deltat$ updates the value of
the parameter $\theta^t$ according to
\begin{equation}
\label{eq:IGOML}
\theta^{t+\deltat}=\argmax_\theta
\Bigg\{
{\textstyle(1-\deltat\sum\limits_i \wi)}\int \ln P_{\theta}(x)\,P_{\theta^t}(\d x)
\,+\,\deltat
\sum_i \wi \ln P_{\theta}(x_i)
\Bigg\}
\end{equation}
where $x_1,\ldots,x_N$ are sample points drawn according to the distribution
$P_{\theta^t}$, and
$\wi$ is the weight \eqref{eq:wi} obtained from the ranked values of the
objective function $f$.
\end{defi}

The IGO-ML algorithm is obviously independent of the parametrization
$\theta$: indeed it only depends on $P_\theta$ itself. Furthermore, the IGO-ML
update \eqref{eq:IGOML} does not even require a smooth parametrization of the
distribution anymore (though in this case, a small $\deltat$ will
likely result in stalling: $\theta^{t+\deltat}=\theta^t$ if the set of
possible values for $\theta$ is discrete).

Like the cross-entropy method below, the IGO-ML algorithm 
can be applied only when the argmax can be computed.

It turns out that for exponential families, IGO-ML is just the IGO
algorithm in a particular parametrization (see Theorem~\ref{thm:IGOCEM}).

\subsection{The Cross-Entropy Method}
Taking $\deltat=1$ in \eqref{eq:IGOML} above corresponds to a full maximum likelihood
update; when using the truncation selection scheme $w$, this is the
\emph{cross-entropy method} (CEM).
The cross-entropy method can be defined in an optimization setting as
follows \citep{CEMtutorial}. Like IGO, it depends on a family of
probability distributions $P_\theta$ parametrized by $\theta\in \Theta$,
and a number of samples $N$ at each iteration. Let also $N_e=\lceil q N\rceil$
($0< q< 1$) be a number of \emph{elite} samples.

At each step, the cross-entropy method for optimization samples $N$
points $x_1,\ldots,x_N$ from the current distribution $P_{\theta^t}$.
Let $\wi$ be $1/N_e$ if $x_i$ belongs to the $N_e$ samples with the best
value of the objective function $f$, and $\wi=0$ otherwise.
Then the \emph{cross-entropy method} or
\emph{maximum likelihoood} update (CEM/ML) for optimization is
\citep[Algorithm 3.1]{CEMtutorial}
\begin{equation}
\label{eq:CEM}
\theta^{t+1}=\argmax_\theta \sum \wi \ln P_\theta (x_i)
\end{equation}
(assuming the argmax is tractable). This corresponds to $\deltat=1$ in
\eqref{eq:IGOML}.

A commonly used version of CEM with a smoother update depends on a step size parameter $0<\alpha\leq 1$
and is given \citep{CEMtutorial} by
\begin{equation}
\label{eq:CEMsmooth}
\theta^{t+1}=(1-\alpha)\theta^t+\alpha \argmax_\theta \sum
\wi \ln P_\theta (x_i).
\end{equation}
The standard CEM/ML update is $\alpha=1$.
For $\alpha=1$, the standard cross-entropy method is independent of the
parametrization $\theta$, whereas for $\alpha<1$ this is not the case.

Note the difference between the IGO-ML algorithm \eqref{eq:IGOML}
and the smoothed CEM update \eqref{eq:CEMsmooth} with step size $\alpha=\deltat$:
the smoothed CEM update performs a weighted average of the parameter value
\emph{after} taking the maximum likelihood estimate, whereas IGO-ML
uses a weighted average of current and previous likelihoods, \emph{then} takes a maximum
likelihood estimate.
In general, these two rules can greatly differ, as they do for Gaussian
distributions (Section~\ref{sec:gaussian}).

This swapping of averaging makes 
IGO-ML parametrization-independent
whereas the smoothed CEM update is not.

Yet, for exponential families of probability distributions, there exists
one particular parametrization
$\theta$ in which the IGO algorithm 
and the smoothed CEM update coincide. We now proceed to this construction.

\subsection{IGO for Expectation Parameters and Maximum Likelihood}
The particular form of IGO for exponential families has an interesting
consequence if the parametrization chosen for the exponential family is
the set of \emph{expectation parameters}.
Let $
P_\theta(x)=\frac1{Z(\theta)}\exp\left(\sum \theta_j
T_j(x)\right)\,H(\d x)
$
be an exponential family as above. The \emph{expectation parameters} are
$\bar T_j=\bar T_j(\theta)=\E_{P_\theta} T_j$,
(denoted $\eta_j$ in
\citealt[Eq.\ 3.56]{Amari2000book}). The notation $\bar T$ will denote the
collection $(\bar T_j)$. We shall use the notation $P_{\bar T}$ to
denote the probability distribution $P$ parametrized by the expectation
parameters.

It is well-known that, in this parametrization, the maximum likelihood
estimate for a sample of points $x_1,\ldots,x_N$ is just the empirical
average of the expectation parameters over that sample:
\begin{equation}
\label{eq:TML}
\argmax_{\bar T} \frac1N \sum_{i=1}^N \ln P_{\bar T} (x_i)=\frac1N
\sum_{i=1}^N T(x_i)
\enspace.
\end{equation}

In the discussion above, one main difference between IGO and smoothed CEM was
whether we took averages before or after taking the maximum
log-likelihood estimate. For the expectation parameters
$\bar T_i$, we see that these operations commute. (One can say that these
expectation parameters ``linearize maximum likelihood estimates''.) After some work we get the following result.

\begin{thm}[IGO, CEM and maximum likelihood]
\label{thm:IGOCEM}
Let \[
P_\theta(\d x)=\frac1{Z(\theta)}\exp\left(\sum \theta_j
T_j(x)\right)\,H(\d x)
\]
be an exponential family of probability
distributions, where the $T_j$ are functions of $x$ and $H$ is some
reference measure. Let us parametrize this family by the expected values 
$\bar{T_j}=\E T_j$.

Let us assume the chosen weights $\wi$ sum to $1$.
For a sample $x_1,\ldots,x_N$, let
\[
T_j^\ast=\sum_i \wi \,T_j(x_i).
\]
Then the IGO update \eqref{eq:IGOupdate} in this parametrization reads
\begin{equation}
\label{eq:IGOEP}
\bar T_j^{t+\deltat}=(1-\deltat)\, \bar T_j^{t}+\deltat\, T_j^\ast.
\end{equation}
Moreover these three algorithms coincide:
\begin{itemize}
\item The IGO-ML algorithm \eqref{eq:IGOML}.
\item The IGO algorithm \eqref{eq:IGOupdate} written in the
parametrization $\bar{T_j}$ \eqref{eq:IGOEP}.
\item The smoothed CEM algorithm \eqref{eq:CEMsmooth} written in the parametrization
$\bar{T_j}$, with $\alpha=\deltat$.
\end{itemize}
\end{thm}

\begin{cor}
\label{cor:CEMIGO}
For exponential families, the standard CEM/ML update \eqref{eq:CEM} coincides with the IGO algorithm
in parametrization $\bar T_j$ with $\deltat=1$.
\end{cor}

Beware that the expectation parameters $\bar T_j$ are not always the most
obvious parameters \cite[Section 3.5]{Amari2000book}. For example, for $1$-dimensional Gaussian
distributions, the expectation parameters are the mean $\mu$ and the
second moment $\mu^2+\sigma^2$, not the mean and variance. 
When expressed back in terms of mean and
variance, the update \eqref{eq:IGOEP} boils down to
$\mu \gets (1-\deltat)\mu+\deltat \mu^*$ and 
$\sigma^2\gets (1-\deltat)\sigma^2+\deltat (\sigma^*)^2+
\deltat(1-\deltat)(\mu^*-\mu)^2$, where $\mu^\ast$ and $\sigma^*$ denote
the mean and standard deviation of the samples $x_i$.

On the other hand, when using smoothed
CEM with mean and variance as parameters, the new variance is 
$(1-\deltat)\sigma^2+\deltat
(\sigma^*)^2$, which can be significantly smaller for $\deltat\in (0,1)$.
This proves, in passing, that the smoothed CEM update
in other parametrizations is generally \emph{not} an IGO algorithm
(because it can differ at first order in $\deltat$).

The case of Gaussian distributions is further exemplified in
Section~\ref{sec:gaussian} below: in particular, smoothed CEM in the
$(\mu,\sigma)$ parametrization almost invariably exhibits
a reduction of variance, often leading to premature convergence.

For these reasons we think that the IGO-ML algorithm is the sensible way
to define an interpolated ML estimate for $\deltat<1$ in a
parametrization-independent way (see however the analysis of a critical
$\deltat$ in Section~\ref{sec:gaussian}).\niko{The problem: for
$\deltat=1$ this algorithm doesn't work on a linear function, hence it is
not likely to be the method of choice for any value close to one. }  In
Appendix~\ref{sec:discussion} we further discuss IGO and CEM and sum up
the differences and relative advantages.

Taking $\deltat=1$ is a bold approximation choice:
the ``ideal'' continuous-time IGO flow itself, after time $1$, does not coincide with
the maximum likelihood update of the best points in the sample. Since
the maximum likelihood algorithm is known to converge prematurely in some
instances
(Section~\ref{sec:gaussian}), using the parametrization by expectation
parameters with large $\deltat$ may not be desirable.

The considerable simplification of the IGO update in these coordinates
reflects the duality of coordinates $\bar T_i$ and $\theta_i$. More
precisely, the natural gradient ascent w.r.t.\ the parameters
$\bar T_i$ is given by the vanilla gradient w.r.t.\ the
parameters $\theta_i$: 
\[
\widetilde{\nabla}_{\bar
T_i}=\frac{\partial}{\partial \theta_i}
\]
(Proposition~\ref{prop:dualgrad} in Appendix~\ref{sec:proofs}).

\section{CMA-ES, NES, EDAs and PBIL from the IGO Framework}\label{sec:CMA-PBIL-IGO}

In this section we investigate the IGO algorithms for Bernoulli measures
and for multivariate normal distributions, and show the correspondence to
well-known algorithms. Restricted Boltzmann machines are given as a
third, novel example. In addition, we discuss the influence of the parametrization of the distributions. 

\subsection{PBIL and cGA as IGO Algorithm\new{s} for Bernoulli Measures}
\label{sec:PBIL}

Let us consider on $X = \{ 0,1\}^{d}$ a family of Bernoulli measures
$P_\theta(x)= p_{\theta_{1}}(x_{1}) \times \ldots \times
p_{\theta_{d}}(x_{d}) $ with $p_{\theta_{i}}(x_{i})=\theta_{i}^{x_{i}}(1
- \theta_{i})^{1-x_{i}}$, with each $\theta_i\in[0;1]$. As this family is a product of probability
measures $p_{\theta_{i}}(x_{i})$, the different components of a random
vector $y$ following $P_{\theta}$ are independent and all off-diagonal
terms of the Fisher information matrix are zero. Diagonal terms are given
by $\frac{1}{\theta_{i}(1-\theta_{i})}$. Therefore the inverse of the
Fisher matrix is a diagonal matrix with diagonal entries equal to $
\theta_{i}(1 - \theta_{i})$. In addition, the partial derivative of $\ln
P_{\theta}(x)$ w.r.t.\ $\theta_{i}$ is computed in a straightforward manner resulting in
$$
\frac{\partial \ln P_{\theta}(x)}{\partial \theta_{i}} = \frac{x_{i}}{\theta_{i}} - \frac{1-x_{i}}{1 - \theta_{i}} \enspace .
$$ 

Let $x_{1}, \ldots, x_{N}$ be $N$ samples at step $t$ with distribution
$P_{\theta^{t}}$ and let $x_{1:N}, \ldots, x_{N:N}$ be the samples ranked
according to $f$ value.
The natural gradient update \eqref{eq:IGOupdatebis} with Bernoulli measures is then
\begin{equation}\label{eq:PBILIGO}
\theta^{t+\deltat}_{i} = \theta^{t}_{i} + \deltat \, \theta_{i}^{t} (1-\theta_{i}^{t})  \sum_{j=1}^{N} w_j \left(  \frac{[x_{j:N}]_{i}}{\theta_{i}^{t}} - \frac{1-[x_{j:N}]_{i}}{1 - \theta_{i}^{t}} \right)
\end{equation}
where $w_j = w({(j - 1/2)}/{N})/N $ and $[y]_{i}$ denotes the
$i^\text{th}$ coordinate of $y \in X$. The previous equation simplifies to
\begin{equation}
\label{eq:PBILIGOsimple}
\theta^{t+\deltat}_{i} = \theta^{t}_{i} + \deltat \, \sum_{j=1}^{N} w_j \left([x_{j:N}]_{i} - \theta_{i}^{t} \right) \enspace,
\end{equation}
or, denoting $\bar{w}$ the sum of the weights $\sum_{j=1}^{N} w_j$,
\begin{equation}
\theta^{t+\deltat}_{i} = (1 - \bar{w} \deltat)\, \theta^{t}_{i} + \deltat
\, \sum_{j=1}^{N} w_j \, [x_{j:N}]_{i} \enspace.
\end{equation}

\newcommand{\LR}{\ensuremath{{\text{\scshape lr}}}}
\newcommand{\MUTPROB}{\ensuremath{{\text{\scshape mut\_probability}}}}

The algorithm so obtained coincides with the so-called
\emph{population-based incremental learning} algorithm (PBIL,
\citealt{Baluja:ICML1995}) 
for $N=\texttt{NUMBER\_SAMPLES}$ and the appropriate (usually non-negative) weights $w_j$, 
as well as with the \emph{compact genetic algorithm} (cGA, \citealt{harik1999compact}) for $N=2$ and $w_1=-w_2$. Different variants of PBIL correspond to
different choices of the selection scheme $w$.
\new{In cGA, components for which both samples have the same value are unchanged, because $w_1+w_2=0$.}\del{when both samples have the same value, the parameter is
unchanged as $w_1+w_2=0$.} 
We have thus proved the following.

\begin{prop}
\label{prop:PBIL}
The IGO algorithm on $\{0,1\}^d$ using Bernoulli measures parametrized
by $\theta$ as above, coincides with the \emph{compact Genetic Algorithm} (cGA) 
when $N=2$ and $w_1=-w_2$.

Moreover, it coincides with \emph{Population-Based Incremental Learning} (PBIL) with the following correspondence of parameters.
The PBIL algorithm using the $\mu$ best solutions, see
\citet[Figure~4]{Baluja:ICML1995}, is recovered\,\footnote{Note that the pseudocode
for
the algorithm in \citet[Figure~4]{Baluja:ICML1995} is slightly erroneous since it
gives smaller weights to better
individuals. The error can be fixed by updating the probability in
reversed order, looping from {\tt\footnotesize
NUMBER\_OF\_VECTORS\_TO\_UPDATE\_FROM} to 1. This was confirmed
by S.~Baluja in personal communication. We consider here the corrected
version of the algorithm.} using $\deltat=\LR$, 
$w_j = (1-\LR)^{j-1}$ for $j=1,\ldots,\mu$, and $w_j=0$ for
$j=\mu+1,\ldots,N$.

If the selection scheme of IGO is chosen as $w_1 = 1$, $w_j=0$ for
$j=2,\ldots,N$, IGO recovers the PBIL/EGA algorithm with update rule
towards the best solution \cite[Figure~4]{Baluja:PBIL:1994}, with $\deltat = \LR$ (the learning rate
of PBIL) and $\MUTPROB=0$ (no random mutation of $\theta$).
\end{prop}

%

Interestingly, the parameters $\theta_i$ are the expectation parameters described
in Section~\ref{sec:IGOML}: indeed, the expectation of $x_i$ is
$\theta_i$. So the formulas above are particular
cases of \eqref{eq:IGOEP}. Thus, by Theorem~\ref{thm:IGOCEM}, PBIL
is both a smoothed CEM in these parameters and an IGO-ML algorithm.

\newcommand{\thetap}{\vphantom{\tilde{\tilde\theta}}\tilde{\theta}}

Let us now consider another, so-called ``logit'' representation, given by the logistic function
$P(x_i=1) = \frac{1}{1 + \exp(-\thetap_i)}$. This $\thetap$ is the
exponential parametrization of Section~\ref{sec:exp}. We find that
\begin{equation}
\frac{\partial \ln P_{\thetap}(x)}{\partial \thetap_{i}} = (x_{i} -1) +
\frac{\exp(-\thetap_{i})}{1+\exp(-\thetap_{i})}=x_i-\E x_i
\end{equation}
(cf.~Eq.\;\ref{eq:gradexp}) and that the diagonal elements of the Fisher information matrix are given
by $\exp(-\thetap_{i})/(1+\exp(-\thetap_{i}))^{2}=\Var x_i$
(as per Eq.\;\ref{eq:fishexp}).  So the natural gradient update
\eqref{eq:IGOupdatebis} with Bernoulli measures in parametrization
$\tilde \theta$ reads
\begin{equation}
\thetap^{t+\deltat}_{i} = \thetap^{t}_{i} + \deltat (1 + \exp(\thetap_{i}^{t})) \left( - \bar{w} +  (1+\exp(-\thetap_{i}^{t}))  \sum_{j=1}^{N} w_j [x_{j:N}]_{i} \right) \enspace.
\end{equation}

To better compare the update with the previous representation, note
that $\theta_{i} = \frac{1}{1 + \exp(-\thetap_i)} $ and thus we can
rewrite
\begin{equation}
\thetap^{t+\deltat}_{i} = \thetap^{t}_{i} +
\frac{\deltat}{\theta_{i}^{t}(1-\theta_{i}^{t})} 
\sum_{j=1}^{N} w_j \left(
[x_{j:N}]_{i}-\theta_i^t
\right)
\enspace.
\end{equation}
%

So the direction of the update is the same as before and is given by the
proportion of bits set to 0 or 1 in the best samples, compared to its expected
value
under the current distribution. The
magnitude of the update is different since the parameter $\thetap$ ranges
from $-\infty$ to $+\infty$ instead of from $0$ to $1$.
We did not find this algorithm in the literature.

These updates also illustrate the influence of setting the sum of weights to
$0$ or not (Section~\ref{sec:impl}). If, at some time, the first
bit is equal to 1 both for a majority of good points and for a majority of
bad points, then the original PBIL will increase the probability of
setting the first bit to $1$, which is counterintuitive. If the weights
$w_i$ are chosen to sum to $0$ this noise effect
disappears; otherwise, it disappears only on average.

\subsection{Multivariate Normal Distributions (Gaussians)}
\label{sec:gaussian}

Evolution strategies \citep{rechenberg1973, schwefel1995evolution,
beyer2002evolution} are black-box optimization algorithms for the
continuous search domain, $X\subseteq\R^\n$ (for simplicity we
assume $X=\R^\n$ in the following), which use multivariate normal
distributions to sample new solutions.
In the context of continuous black-box
optimization, \emph{Natural Evolution Strategies} (NES) introduced the
idea of using a natural gradient update of the distribution parameters
\citep{Wierstra2008, Sun:2009:ENE:1569901.1569976, Glasmachers2010,wierstra2014natural}.
Surprisingly, the well-known \emph{Covariance Matrix Adaption
Evolution Strategy} (CMA-ES, \citealt{hansen1996, hansen2001,
hansen2003reducing, hansen2004, jastrebski2006improving}) also turns out to
conduct a natural gradient update of distribution parameters
\citep{Akimoto:ppsn2010, Glasmachers2010}. 

Let $x\in\R^d$. As the most prominent example, we use mean vector $\m=\E
x$ and covariance matrix $\Cm=\E (x-\m)(x-\m)^\T=\E(xx^\T)-\m\m^\T$ to
parametrize a normal distribution via $\theta=(\m, C)$. 
The IGO update in \eqref{eq:IGOupdate} or \eqref{eq:IGOupdatebis} in this parametrization can now be entirely formulated without the (inverse) Fisher matrix, similarly
to \eqref{eq:IGOEP} or \eqref{eq:IGOEPflow}.
The complexity of the update is linear in the number of parameters (size of $\theta=(m,C)$,
where $(d^2-d)/2$ parameters are redundant).

Let us discuss known algorithms that implement updates of this kind.

\subsubsection{CMA-ES.} 
The rank-$\mu$-update CMA-ES implements the equations\footnote{The CMA-ES implements these
equations given the parameter setting $c_1=0$ and $c_\sigma=0$ (or
$d_\sigma=\infty$, see e.g.\ \citealt{hansen2009benchmarking}) that
disengages the effect of the rank-one update and of step size control and therefore of both so-called evolution paths.}
\mc{\mt}{m^t}
\mc{\mtt}{m^{t+1}}
\mc{\Ct}{C^t}
\mc{\Ctt}{C^{t+1}}
\begin{align}
   \label{eq:cmamean}
   \mtt &= \mt + \etam \sum_{i=1}^N \wi (\x_i - \mt)
   \\ 
    \label{eq:cmacov}
   \Ctt &= \Ct + \etac \sum_{i=1}^N \wi((\x_i-\mt) (\x_i - \mt)^\T - \Ct)
\end{align}
where $\wi$ are the weights based on ranked $f$-values, see \refeq{wi}
and \eqigo.

\begin{prop}
The IGO update \eqref{eq:IGOupdate} for Gaussian distributions in the
parametrization by mean and covariance matrix $(m,C)$, coincides with 
the CMA-ES update equations \eqref{eq:cmamean} and \eqref{eq:cmacov} with
$\etac=\etam$.
\end{prop}
This result is essentially due to 
\citet{Akimoto:ppsn2010} and
\citet{Glasmachers2010}, who showed that the CMA-ES
update with $\etac=\etam$ is a natural gradient update\footnote{ In these articles the result has been derived for
$\theta\gets\theta + \eta\,\gradt \E_{\Pt} f$, see \eqref{eq:IGOflowf},
leading to $f(x_i)$ in place of \wi. No assumptions on $f$ have been used
besides that it does not depend on $\theta$. Consequently, by replacing $f$ with \Wf,
where $\thetat$ is fixed, the derivation holds equally well for
$\theta\gets\theta + \eta\,\gradt\thegoal$.}.

However, 
in deviation from the IGO algorithm, the learning rates \etam\ and
\etac\ are assigned
different values if $N\ll\dim \Theta$ in CMA-ES\footnote{%
Specifically, let $\sum |\wi| = 1$, then the settings are $\etam=1$ and $\etac\approx 1/(\n^2\sum \wi^2)$ \citep{hansen2006eda}. }. 
Note that the Fisher
information matrix is block-diagonal in $m$ and $C$
\citep{Akimoto:ppsn2010}, so
that application of the different learning rates and of the inverse
Fisher matrix commute.
Moreover, CMA-ES uses a \emph{path cumulation} method to adjust the
step sizes, which is not covered by the IGO framework (see
Appendix~\ref{sec:discussion}).

\subsubsection{Convenient Reparametrizations Over Time.} For practical
purposes, at each step it is convenient to work in a representation of
$\theta$ in which the diagonal Fisher matrix $I(\theta^t)$ has a simple
form, e.g., diagonal with simple diagonal entries. It is generally not
possible to obtain such a representation for all $\theta$ simultaneously.
Still it is always possible to find a transformation achieving a diagonal
Fisher matrix at a single parameter $\theta^t$, in multiple ways (it
amounts to choosing a basis of parameter space which is
orthogonal in the Fisher metric). Such a representation is never unique
and not intrinsic,
yet it still provides a convenient way to write the algorithms.

For CMA-ES, one such representation can be found by sending the current
covariant matrix $C^t$ to the identity, e.g., by representing the mean and
covariance matrix by $((C^t)^{-1/2}m, (C^t)^{-1/2}C(C^t)^{-1/2})$ instead
of $(m,C)$. Then the Fisher matrix $I(\theta^t)$ at $(m^t,C^t)$ becomes
diagonal.\NDY{with entries $1$ for $m$, $1/4$ for the diagonal entries of
$C$ and $1/2$ for the non-diag entries of $C$ I believe, but I'd have to
check carefully.}
The next algorithm we discuss, xNES
\citep{Glasmachers2010},
exploits this possibility in a logarithmic representation of the covariance
matrix.

\subsubsection{Natural Evolution Strategies.} 
Natural evolution strategies (NES, \citealt{Wierstra2008,
Sun:2009:ENE:1569901.1569976}) implement \eqref{eq:cmamean} as well, while using
a Cholesky decomposition of $C$ as the parametrization for the update of the
variance parameters. The resulting update that replaces \eqref{eq:cmacov}
is neither particularly elegant nor numerically efficient. The more recent xNES \citep{Glasmachers2010} chooses an ``exponential'' parametrization that naturally depends on the current parameters. This leads to an elegant formulation where the additive update in exponential parametrization becomes a multiplicative update for $C$. With $C=AA^\T$, the matrix update reads
\begin{equation}\label{eq:glasmachers}
    A \gets A \times \exp\left(\frac{\etac}{2} \sum_{i=1}^N \wi\times(z_i z_i^\T - \I)\right)
\end{equation}
where $z_i = A^{-1}(\x_i-\m)$ and $\I$ is the identity matrix. From
\eqref{eq:glasmachers} the updated covariance matrix is
$C\gets A\times \exp\left(\etac \sum_{i=1}^N \wi\times(z_i z_i^\T - \I)\right) \times A^\T$.

Compared to \eqref{eq:cmacov}, the update has the advantage
that also negative weights, $\wi<0$, always lead to a feasible
covariance matrix. By default, xNES sets $\etam\not=\etac$ in the same circumstances
as in CMA-ES, but contrary to CMA-ES the past evolution path is not taken into account \citep{Glasmachers2010}.

When $\etac=\etam$, xNES is consistent with the IGO flow
\eqref{eq:IGOflow}, and implements an IGO algorithm
\eqref{eq:IGOupdate} slightly generalized in that it uses a
$\theta^t$-dependent parametrization, which represents the current
covariance matrix by $0$. Namely, we have:

\begin{prop}[exponential IGO update of Gaussians]
Let $(m^t,C^t)$ be the current mean and covariance matrix. Let
$C^t=AA^\T$. Let $\theta$
be the time-dependent parametrization of the space of Gaussian
distributions, which parametrizes the Gaussian distribution $(m,C)$ by
\begin{equation*}
\theta=(m,R),\qquad R=\ln (A^{-1} C (A^\T)^{-1})
\end{equation*}
where $\ln$ is the logarithm of positive matrices.
(Note that the current value $C^t$ of $C$ is represented as
$R=0$.)

Then the IGO update~\eqref{eq:IGOupdate} in the parametrization $\theta$ is as
follows: the mean $m$ is updated as in CMA-ES~\eqref{eq:cmamean}, and the
parameter $R$ is updated as
\begin{equation}
R \gets \deltat \sum_{i=1}^N \wi\times(A^{-1} (x_i-m)
(x_i-m)^\T (A^\T)^{-1} - \I)
\end{equation}
thus resulting in the same update as~\eqref{eq:glasmachers} (with
$\etac=\deltat$) for the covariance
matrix: $C\gets A \exp(R) A^\T$.
\end{prop}

\begin{proof}
Indeed, by basic differential geometry, if parametrization $\theta'={\phi}(\theta)$ is
used, the IGO update for $\theta'$ is $D{\phi}(\theta^t)$ applied to the IGO
update for $\theta$, where $D{\phi}$ is the differential of
${\phi}$. Here, given the update~\eqref{eq:cmacov} for $C$, to find
the update for $R$
we have to compute the differential of the map $C\mapsto \ln(A^{-1}C
(A^\T)^{-1})$ taken at $C=AA^\T$: for any matrix $M$ we have
$\ln(A^{-1}(AA^\T+\epsilon
M)(A^\T)^{-1})=\epsilon\,A^{-1}M(A^\T)^{-1}+O(\eps^2)$. So to find the
update for the variable $R$ we have to apply $A^{-1}\ldots (A^\T)^{-1}$
to the update~\eqref{eq:cmacov} for $C$.
\end{proof}




\subsubsection{Cross-Entropy Method and EMNA} 
\emph{Estimation of distribution algorithms} (EDA) and the \emph{cross-entropy method} (CEM, \citealt{rubinstein1999cross,rubinstein2004cross}) estimate a new distribution from a censored sample.
Generally, the new parameter value
can be written as 
\mc{\thetaLL}{\theta_\mathrm{maxLL}}
\mc{\Pw}{P_w}%
\begin{align}
    \label{eq:minEMNA}
    \thetaLL 
      &= \arg\max_\theta \sum_{i=1}^N \wi \ln \Pt(x_i)
    \\\nonumber
      &\longrightarrow_{N\to\infty} \arg\max_\theta \E_{\Ptt} \Wf \ln\Pt 
\end{align}
by Theorem~\ref{thm:consistency}.
Here, the weights $\hat{w}_i$ are equal to $1/\mu$ for the $\mu$ best
points (censored or elitist sample) and $0$ otherwise. 
This \thetaLL\ maximizes the weighted log-likelihood of $x_1,\dots, x_N$;
equivalently, it minimizes the cross-entropy and the Kullback--Leibler
divergence to the distribution of the best $\mu$ samples%
\footnote{Let \Pw\ denote the distribution of the weighted
samples: $\mathrm{Pr}(x=x_i) = \wi$ and $\sum_i\wi=1$. Then the
cross-entropy between \Pw\ and \Pt\ reads $\sum_i \Pw(x_i) \ln1/\Pt(x_i)$
and the KL divergence reads $\KL{\Pw}{\Pt} = \sum_i \Pw(x_i)
\ln1/\Pt(x_i) - \sum_i \Pw(x_i) \ln 1/\Pw(x_i)$. Minimization of both
terms in $\theta$ result in \thetaLL. }.

For Gaussian distributions, Equation \eqref{eq:minEMNA} can be explicitly written in the form 
\mc{\Chat}{\widehat{C}}
\begin{align}
\label{eq:EMNAmean}
\mtt &= m^\ast=\sum_{i=1}^N \wi x_i \\
\label{eq:EMNAcov}
\Ctt &= C^\ast=
    \sum_{i=1}^N \wi (x_i - m^\ast)(x_i - m^\ast)^\T
\end{align}
the empirical mean and variance of the elite sample.
%
%

Equations \refeq{EMNAmean} and \refeq{EMNAcov} also define the
simplest continuous domain EDA, the \emph{estimation of multivariate
normal algorithm} (EMNA$_\mathrm{global}$,
\citealt{larranaga2002estimation}). Interestingly,
\refeq{EMNAmean} and \refeq{EMNAcov} only differ from \eqref{eq:cmamean}
and \eqref{eq:cmacov} (with $\etam=\etac=1$) in that the new mean \mtt\ is used instead of \mt{} in the covariance matrix update \citep{hansen2006eda}.

The smoothed CEM \eqref{eq:CEMsmooth} in this parametrization thus writes
\begin{equation}m^{t+\deltat}=(1-\deltat) m^t+\deltat
\label{eq:smoothCEMmean}
m^\ast\end{equation}
\begin{equation}
\label{eq:smoothCEMcov}
C^{t+\deltat}=(1-\deltat) C^t+\deltat C^\ast \enspace.
\end{equation}
Note that \emph{this is
not an IGO algorithm} (i.e., there is no parametrization of the family of
Gaussian distributions in which the IGO algorithm coincides with
update Eq.\;\ref{eq:smoothCEMcov}): 
indeed, all IGO algorithms coincide at first order in $\deltat$
when $\deltat\to 0$ (because they recover the IGO flow), while this update for $C^{t+\deltat}$ does not
coincide with~\eqref{eq:cmacov} in this limit, due to the use of
$m^\ast$ instead of $m^t$.
This does not
contradict Theorem~\ref{thm:IGOCEM}: smoothed CEM is an IGO algorithm
only if smoothed CEM is written in the expectation parametrization, which $(m,C)$ is not.

\subsubsection{CMA-ES, Smoothed CEM, and IGO-ML}
Let us compare IGO-ML \eqref{eq:IGOML}, rank-$\mu$ CMA
\eqref{eq:cmamean}--\eqref{eq:cmacov}, and smoothed CEM
\eqref{eq:smoothCEMmean}--\eqref{eq:smoothCEMcov}
 in the parametrization with
mean and covariance matrix. 
These algorithms all update the
distribution mean in the same way, while the
update of the covariance matrix depends on the algorithm. With
learning rate $\deltat$, these
updates are computed to be
\begin{equation}
\label{eq:covgeneral}
\begin{aligned}
\mtt &= (1-\deltat)\, \mt + \deltat\, m^*
\\
\Ctt &= (1-\deltat)\, \Ct + \deltat\, C^\ast + \deltat (1-\deltat)^j\,
(m^*-\mt)(m^*-\mt)^\T
\enspace,
\end{aligned}
\end{equation}
for different values of $j$,
where $m^\ast$ and $C^\ast$ are the mean and covariance matrix computed
over the elite sample (with positive weights $\wi$ summing to one) as above. 
The rightmost term of \refeq{covgeneral} is reminiscent of the so-called rank-one
update in CMA-ES (not included in Eq.\;\ref{eq:cmacov}).

For $j=0$ we recover the rank-$\mu$ CMA-ES update \eqref{eq:cmacov}, for $j=1$ we recover IGO-ML, and for $j=\infty$ we recover smoothed CEM (the
rightmost term is absent). The case $j=2$ corresponds to an update 
that uses $m^{t+1}$  instead of $m^t$ in
\eqref{eq:cmacov} (with $\etam=\etac=\deltat$).
For $0<\deltat<1$, the larger $j$, the smaller \Ctt.
For $\deltat=1$, IGO-ML and
smoothed CEM/EMNA
realize \thetaLL\ from \refeq{minEMNA}--\eqref{eq:EMNAcov}.

For $\deltat\to0$, the update is independent of $j$ at first order in
$\deltat$ if $j<\infty$: this reflects compatibility with the IGO flow of
CMA-ES and of IGO-ML, but not of smoothed CEM.

\mc{\muw}{\widehat\mu}
In the default (full) CMA-ES (as opposed to rank-$\mu$ CMA-ES), the coefficient preceeding $(m^*-\mt)(m^*-\mt)^\T$ in 
\eqref{eq:covgeneral} reads approximately $3\deltat$, where the
additional $2\deltat$ originate from the so-called rank-one update and
are moreover modulated by a ``cumulated path'' up to a factor of about $\sqrt{d}$ \citep{hansen2014principled}.

\subsubsection{Critical \deltat}
Let us assume that $\mu<N$ weights are set to
$\wi = 1/\mu$ and the remaining weights to zero, so that the selection ratio is $q=\mu/N$.

Then there is a critical value of $\deltat$ depending on this 
ratio $q$, such
that above this critical \deltat\ the algorithms given by IGO-ML and
smoothed CEM are prone to premature
convergence. Indeed, let $f$ be a linear function on $\R^d$, and consider
the variance in the direction of the gradient of $f$. 
Assuming further $N\to\infty$ and $q\le1/2$, then the variance $C^\ast$
of the elite sample is smaller than the current variance $C^t$, by a
constant factor.
Depending on the precise update for $C^{t+1}$, if $\deltat$ is too
large, the variance $C^{t+1}$ is going to be smaller than $C^t$ by a
constant factor as well.
This implies that the algorithm is going to stall, i.e., the variance
will go to $0$ before the optimum is reached. (On the other hand,
the continuous-time IGO flow corresponding to $\deltat\to 0$ does not
stall, see Appendix~\ref{sec:examples}.)

We now study the critical $\deltat$ (in the limit $N\to\infty$) \new{below}\del{under}
which the algorithm does not stall (this is done similarly
to the analysis in Appendix~\ref{sec:examples} for linear functions).
For IGO-ML, 
($j=1$ in Eq.\;\ref{eq:covgeneral}, or equivalently for the smoothed CEM in the
expectation parameters $(m,C+mm^\T)$, see
Section~\ref{sec:IGOML}),
the variance increases if and only if $\deltat$ is smaller than the
critical value
$\deltat_\text{crit}=qb\sqrt{2\pi}e^{b^2/2}$
where $b$ is the percentile function of $q$, 
i.e.\ $b$ is such that $q=\int_b^\infty
e^{-x^2/2}/\sqrt{2\pi}$. This
value $\deltat_\text{crit}$ is plotted as a solid line in Fig.~\ref{fig:criticaldt}.  For $j=2$, $\deltat_\text{crit}$ is
smaller, related to the above by
$\deltat_\text{crit}\leftarrow\sqrt{1+\deltat_\text{crit}}-1$ and plotted
as a dashed line in Fig.~\ref{fig:criticaldt}. For CEM ($j=\infty$), the
critical \deltat\ is zero (reflecting the non-IGO behavior of CEM in this
parametrization). For CMA-ES ($j=0$), the
critical \deltat\ is infinite for $q<1/2$. When the selection ratio
$q$ is above $1/2$, for all algorithms the critical \deltat\ becomes zero. 

\begin{figure}
\centering\includegraphics[width=0.49\textwidth]{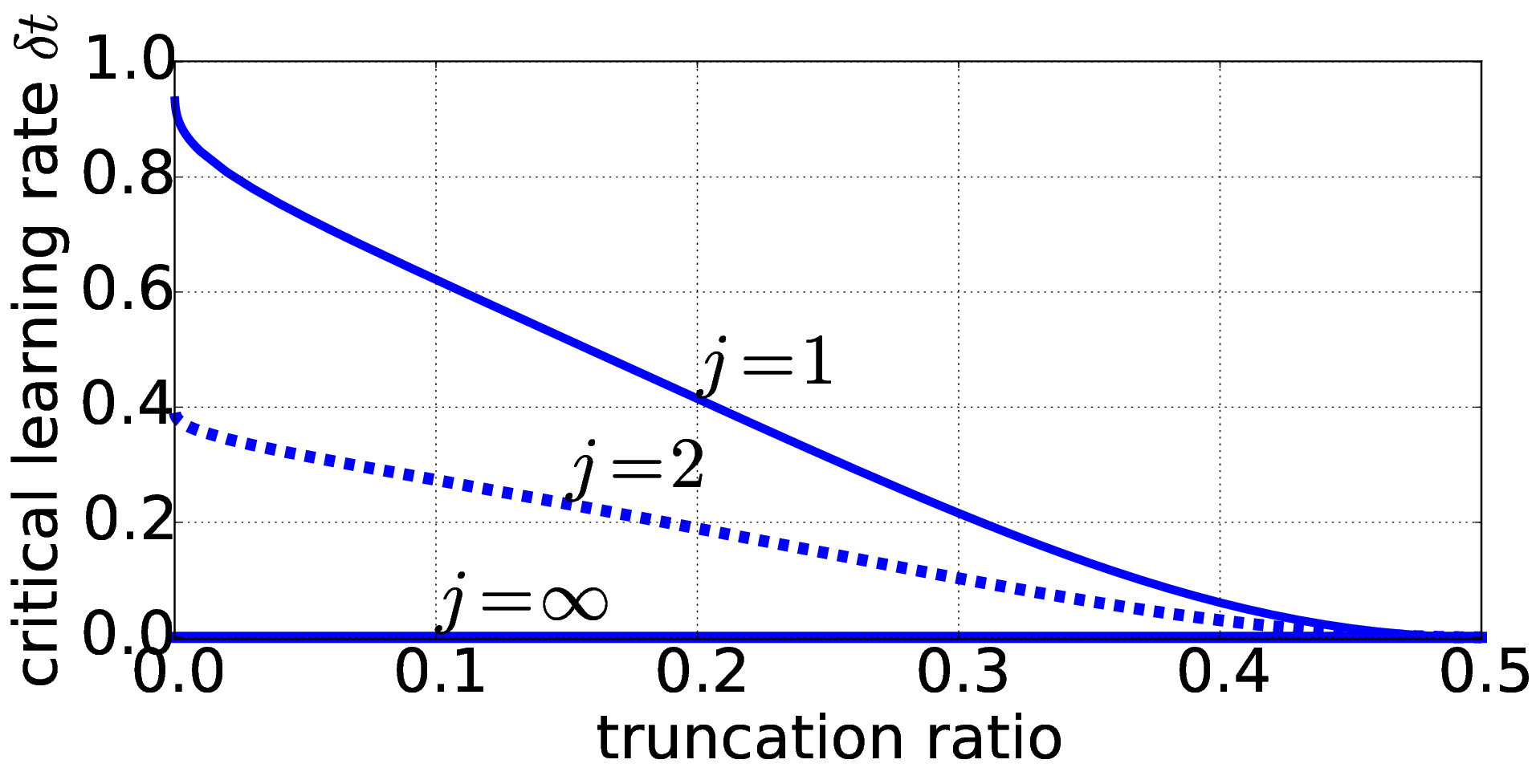}
\caption{\label{fig:criticaldt}Critical \deltat\ versus selection truncation
ratio $q$ for three values of $j$ in \refeq{covgeneral}. With \deltat\ above the critical \deltat, the variance decreases systematically when optimizing a linear function, indicating failure of the algorithm. For CMA-ES/NES \new{where $j=0$}, the critical \deltat\ for $q<0.5$ is infinite. }
\end{figure}

\del{We conclude that, despite the principled approach of ascending the
natural gradient, the choice of the selection function $w$, the choice of
\deltat, and possible choices in the update for $\deltat>0$\TODO[I don't
really understand what is meant here---Yann], 
need to be taken with care in
relation to the choice of parametrization.}

\subsubsection{Gaussian Distributions with Restricted Parametrization}

When considering a restricted parametrization of multivariate normal
distributions, IGO recovers other known algorithms. In particular for
sep-CMA-ES \citep{ros2008simple} and SNES \citep{schaul2011high}, the
update has been restricted to the diagonal of the covariance matrix.   

\del{\subsubsection{Elitist selection.} In evolution strategies,
elitist selection (also called plus-selection) is another common
approach in which the all-time best point(s) are taken into account in each
iteration.  Plus-selection can be modelled in the IGO framework by using
the current all-time best samples in addition to samples from $P_\theta$. 
Specifically, in the ($\mu+\lambda$)-selection scheme, we set
$N=\mu+\lambda$ and let $x_1,\dots, x_\mu$ be the
current all-time $\mu$ best points. 
Then we sample $\lambda$ new points, $x_{\mu+1},\dots, x_N$, from
the current distribution $P_\theta$ and just apply \eqref{eq:intrinsicIGOupdate} with $w(q) =
(N/\mu)\1_{q\le\mu/N} $.}

\subsection{IGO for Restricted Boltzmann Machines,
and Multimodal Optimization}

As a third example after Bernoulli and Gaussian distributions, we have
applied IGO to restricted Boltzmann machines (RBMs,
\citealt{Smolensky1986,Ackley1985}), which are families of 
distributions on the search space $X=\{0,1\}^d$ extending Bernoulli
distributions to allow for dependencies between the bits (see, e.g.,
\citealt{Berny2002} for Boltzmann machines used in an optimization
setting). The details are given in Appendix~\ref{sec:RBM}.
We chose this example for two reasons. 

First, it illustrates how to obtain a novel algorithm from a family of
probability distributions in the IGO framework, in a case when there is
no fully explicit expression for the Fisher information matrix.

Second, it illustrates the influence of the natural gradient on diversity
and its relevance to multimodal optimization. The RBM probability
distributions on $\{0,1\}^d$ are multimodal, contrary to Bernoulli
distributions. Thus, in principle, IGO on such distributions could reach several optima
simultaneously. This allows for a simple testing of the
entropy-maximizing property of the IGO flow
(Proposition~\ref{prop:maxent}). Accordingly, for a given improvement on
the objective function, we would expect IGO to favor preserving the
diversity of $P_\theta$.
Indeed on a simple objective function
with two optima, we find that the IGO flow converges to a distribution
putting weight on both optima, while a descent using the vanilla gradient
always only concentrates around one optimum. This might be relevant for
multimodal optimization, where simultaneous handling of multiple
hypotheses is seen as useful for optimization 
\citep{SareniK_multimodal98,Suganthan_multimodal2011}, or in situations
in which several valleys appear equally good at first but only one of
them contains the true optimum.

We refer to Appendix~\ref{sec:RBM} for a reminder about Boltzmann
machines, for how IGO rolls out in this case, and for the detailed
experimental setup for studying the influence of IGO in a multimodal
situation.

\section{Summary and Conclusion}

We sum up:

\begin{itemize}
\item The information-geometric optimization (IGO) framework derives from
invariance principles the uniquely defined IGO flow
(Definition~\ref{def:IGOflow}), and allows,
via discretization in time and space, to build optimization algorithms from any family of distributions on any
search space. In some instances (Gaussian distributions on $\R^d$ or
Bernoulli distributions on $\{0,1\}^d$) it
recovers versions of known algorithms (PBIL, CMA-ES, cGA, NES); in other
instances (restricted Boltzmann machine distributions) it produces new,
hopefully efficient optimization algorithms.

\item The use of a quantile-based, time-dependent transform of the
objective function, Equation~\refeq{f-replacement}, provides a rigorous
derivation of rank-based update rules, frequently used in current optimization
algorithms. Theorem~\ref{thm:consistency} uniquely identifies the
infinite-population limit of these update rules.

\item The IGO flow is singled out by its equivalent description as an
infinitesimal weighted maximum log-likelihood update (Theorem~\ref{thm:IGOML}). 
In a particular
parametrization and with a step size of $1$, IGO recovers the
cross-entropy method (Corollary~\ref{cor:CEMIGO}). This reveals a
connection between CEM and the natural gradient, and allows to
define a new, fully parametrization-invariant smoothed maximum likelihood
update, the IGO-ML.

\item Theoretical arguments suggest that the IGO flow minimizes the
change of diversity in the course of optimization. 
\new{As diversity usually decreases in the course of optimization, 
IGO algorithms tend to exhibit minimal diversity loss for the observed improvement.}
In particular,
starting with high diversity and using multimodal distributions may allow
simultaneous exploration of multiple optima of the objective function.
Preliminary experiments with restricted Boltzmann machines confirm this
effect in a simple situation.

\end{itemize}

Thus, the IGO framework provides sound theoretical
foundations to optimization algorithms based on
probability distributions. 
In particular, this viewpoint helps to bridge the gap between continuous
and discrete optimization.

The invariance properties, which reduce the number of arbitrary choices,
together with the relationship between natural gradient and diversity,
may contribute to a theoretical explanation of the good practical
performance of those currently used algorithms, such as CMA-ES, which can be interpreted as
instantiations of IGO.

We hope that invariance properties
will acquire in computer science the importance they have in mathematics,
where intrinsic thinking is the first step for abstract linear algebra or differential
geometry, and in modern physics, where the notions of invariance w.r.t.\
the coordinate system and so-called gauge invariance play a central role.

\acks{The authors would like to thank Michèle Sebag for the acronym and for
helpful comments. We also thank Youhei Akimoto for helpful
feedback and inspiration.
Y.\,O.\ would like to thank Cédric Villani and Bruno Sévennec for helpful
discussions on the Fisher metric. A.\,A. and N.\,H. would like to acknowledge the Dagstuhl Seminar No~10361 on the Theory of Evolutionary Computation 
(\url{http://www.dagstuhl.de/10361}) for crucially inspiring this paper, their work on natural
gradients and beyond. This work was partially supported by the ANR-2010-COSI-002
grant (SIMINOLE) of the French National Research Agency.}

\vfill\eject
\appendix

\section{Further Discussion and Perspectives}
\label{sec:discussion}
This appendix touches upon further aspects and perspectives of the IGO framework and its implementations. 

\subsection{A Single Framework for Optimization on Arbitrary Spaces}
A strength of the IGO viewpoint is to automatically provide a distinct
and arguably \new{in some sense} optimal 
optimization algorithm from
any family of probability distributions on any given
space, discrete or continuous.
IGO algorithms feature desired
invariance properties and thereby make fewer arbitrary choices.

In particular, IGO describes several well-known optimization
algorithms within a single framework. For Bernoulli distributions, to the best of our knowledge, PBIL or cGA have never been identified as a natural gradient
ascent in the literature\footnote{Thanks to Jonathan Shapiro for giving an early
argument confirming this property (personal communication).}.
For Gaussian distributions, algorithms of the same form \eqref{eq:IGOupdate}
had been developed previously \citep{hansen2001,Wierstra2008} and their
close relationship with a natural gradient ascent had been recognized
\citep{Akimoto:ppsn2010, Glasmachers2010}. These works, however, also strongly suggest 
that IGO algorithms may need to be complemented with further heuristics to 
achieve efficient optimization algorithms. In particular, learning rate
settings and diversity control (step-size) deviate from the original
framework. These deviations mostly stem from 
time discretization and 
the choice of a finite sample size, both necessary to derive an IGO \emph{algorithm} from the 
IGO 
\emph{flow}. Further work is needed to fully understand this from a
theoretical viewpoint.

The wide applicability of natural gradient approaches seems not to be
widely known in the optimization community (though see
\citealt{MalagoGECCO2008InformationGeometryPerspectiveOfEDAs}).

\subsection{About Invariance} 
The role of invariance is two-fold. First,
invariance breaks otherwise arbitrary choices in the algorithm design.
Second, and more importantly, invariance implies generalization of
behavior from single problem instances to entire problem classes (with
the caveat to choose the initial state of the algorithm accordingly), thereby making the outcome of optimization more predictable. 

Optimization problems faced in reality---and algorithms used in
practice---are often far too complex to be amenable to a rigorous
mathematical analysis. Consequently, the judgement of algorithms in
practice is to a large extent based in empirical observations, either on
artificial benchmarks or on the experience with real-world problems.
Invariance, as a guarantee of generalization, has an immediate impact on the
relevance of any such empirical observations and, in the same line of
reasoning, can be interpreted as a notion of robustness. 

\subsection{About Quantiles} The IGO flow in \eqref{eq:IGOflow} has, to
the best of our knowledge, never been defined before.
The introduction of the quantile-rewriting \refeq{f-replacement} of the
objective function provides the first rigorous derivation of quantile- or
rank- or comparison-based optimization from a gradient ascent in $\theta$-space.

NES and CMA-ES have been claimed to maximize $-\E_\Ptheta f$ via
natural gradient ascent \citep{Wierstra2008,Akimoto:ppsn2010}.  However,
we have proved that the NES and CMA-ES updates actually converge to the IGO flow,
not to the similar flow with the gradient of $\E_\Ptheta f$
(Theorem~\ref{thm:consistency}).  So we find that in reality these
algorithms maximize $\E_{\Pt} \Wf$, 
where \Wf\ is a decreasing transformation of the $f$-quantiles under the current sample
distribution.

Moreover, in practice, maximizing $-\E_\Ptheta f$ tends to be a rather
unstable procedure and has been discouraged, see for example \citet{whitley1989genitor} 
and \citet{Sun:2009:ENE:1569901.1569976}.

\subsection{About Choice of $P_\theta$: Learning a Model of Good Points}
The choice of the family of probability distributions $P_\theta$ plays a
double role.

First, it is analogous to choosing the variation operators (namely \textit{mutation} or \textit{recombination}) as
seen in evolutionary algorithms: indeed, $P_\theta$ encodes possible moves according to which
new sample points are explored.

Second, optimization algorithms using distributions can be interpreted as
learning a probabilistic model of where the points with good values lie
in the search space. With this point of view, $P_\theta$ describes
\emph{richness of this model}: for instance, restricted Boltzmann
machines with $h$ hidden units can describe distributions with up to
$2^h$ modes, whereas the Bernoulli distribution is unimodal.
This influences, for instance, the ability to explore several valleys and
optimize multimodal functions in a single run.

More generally, the IGO framework makes it tempting to use more
complex models of where good points lie, inspired,  e.g., from machine learning,
and adapt them for optimization. The restricted Boltzmann machines of
Appendix~\ref{sec:RBM} are a first step in this direction. The initial
idea behind these machines is that each hidden unit controls a block of
coordinates of the search space (a block of features), so that the
optimization algorithm hopefully builds a good model of which features
must be activated or de-activated together to obtain good values of $f$.
This is somewhat reminiscent of a crossover operator: if observation of
good points shows that a block of features go together, this information
is stored in the RBM structure and this block may be later activated as a
whole, thus effectively transferring blocks of features from one good
solution to another.  Inspired by models of deep learning
\citep{Bengio2012}, one
might be tempted to stack such models on top of each other, so that
optimization would operate on a more and more abstract representation of
the problem.
IGO and the natural gradient might help in exploiting the added expressivity that comes with
richer models: in our simple experiment, the vanilla gradient ignores the
additional expressivity of RBMs with respect to Bernoulli distributions
(Appendix~\ref{sec:RBM}).
The downside of a richer model is that either the sample size $N$ must be
increased or the learning rate $\deltat$ must be decreased to obtain a
stable algorithm (a situation analogous to overfitting in machine
learning: richer models will be quicker to concentrate too much
around a few observed good samples).\niko{My interpretation of the failure mode I
have observed in rich models is somewhat different though: I thought they
fail due to random drift of parameters. Contemplating over this, I would
not even be 100\% sure anymore that richer models \emph{always} need a
larger $N/\deltat$. }\NDY{True. That's actually another way in which
overfitting manifests itself in practice...}

\subsection{Natural Gradient and Parametrization Invariance}
Central to IGO is the use of the natural gradient, which follows from
$\theta$-invariance (parametrization invariance) and makes sense on any search space,
discrete or continuous. 

While the IGO flow is exactly $\theta$-invariant, for any practical
implementation of an IGO algorithm, a parametrization choice has to
be made. Different parametrizations lead to different algorithms and larger values of \deltat\ are likely to result in more differing algorithms. 
Still, since all IGO algorithms approximate the IGO flow,
two parametrizations in combination with IGO will
differ less than the same two parametrizations 
in combination with another algorithm (such as the
vanilla gradient or the smoothed CEM method)---at
least if the learning rate $\deltat$ is not too large. 
The chosen parametrization becomes more relevant as
the step size $\deltat$ increases.

On the other hand, natural evolution strategies have not emphasized
$\theta$-invariance:  the chosen parametrization
(Cholesky, exponential) has been considered as a defining feature. We
believe the term ``natural evolution strategy'' should rather be used
independently of the chosen parameterization, thereby referring to
the usage of the natural gradient as the main principle for the update of distribution parameters.

\subsection{IGO, Maximum Likelihood and Cross-Entropy} The cross-entropy
method (CEM, \citealt{CEMtutorial}) can be used to produce
optimization algorithms given a family of probability distributions on an
arbitrary space, by performing a jump to a maximum likelihood
estimate of the parameters.

We have seen (Corollary~\ref{cor:CEMIGO}) that the standard CEM is an IGO
algorithm \emph{in a particular parametrization}, with a learning rate $\deltat$
equal to $1$. However, it is well-known, both theoretically and
experimentally \citep{branke2007addressing,hansen2006eda,WAGNER:2004:INRIA-00070802:1}, that standard CEM
loses diversity too fast in many situations. The usual solution
\citep{CEMtutorial} is to
reduce the learning rate (smoothed CEM, Equation\;\ref{eq:CEMsmooth}), but this
breaks the reparametrization invariance of non-smoothed CEM.

On the other hand, the IGO flow can be seen as a \emph{maximum likelihood
update with infinitesimal learning rate} (Theorem~\ref{thm:IGOML}). This
interpretation allows to define a particular IGO algorithm, the IGO-ML
(Definition~\ref{def:IGOML}): it performs a maximum likelihood update
with an arbitrary learning rate, and keeps the reparametrization
invariance. It coincides with CEM when the learning rate is set to $1$,
but it differs from smoothed CEM by the exchange of the order of
argmax and averaging (compare Equations\;\ref{eq:IGOML} and
\ref{eq:CEMsmooth}),
and coincides with the IGO flow for small learning rates.
We argue that this new, fully invariant algorithm is the conceptually
better way to reduce the learning rate and
achieve smoothing in CEM.

Standard CEM with rate $1$ can lose diversity, yet is a particular case of an
IGO algorithm: this illustrates the fact that reasonable values of the learning
rate $\deltat$ depend on the parametrization. We have studied this
phenomenon in detail for various Gaussian IGO algorithms
(Section~\ref{sec:gaussian}).

Why would a smaller learning rate perform better than a large one in
an optimization setting? It might seem more efficient to jump directly to
the maximum likelihood estimate of currently known good points, instead
of performing an iterated gradient ascent towards this maximum.
Due to having only a limited number of samples, however, optimization faces a ``moving target'', contrary to a learning
setting in which the example distribution is often stationary. Currently
known good points heavily depend on the current distribution and
are likely not to indicate the \emph{position} at which the optimum lies,
but, rather, the broad \emph{direction} in which the optimum is to be found.
After each update, the next elite sample points are going to be located
somewhere new.  
\del{So the goal is certainly not to settle down around these
currently known points, as a maximum likelihood update does: by design,
CEM only tries to reflect status-quo\niko{For infinite sample size picking the best sample seems to be the optimal strategy, so CEM seems optimal. I can see that for a fixed $q$ the current distribution remains present, while "tries to reflect status-quo" seems still strange, as it is rather a bug (wrong parameter $q$) than a feature in this case. For infinite sample size we will also never see premature convergence, independently of $q$ and the natural gradient will not perform better. }, whereas IGO
tries to move somewhere.  When the target moves over time, a progressive
gradient ascent is more reasonable than an immediate jump to a temporary
optimum, and realizes a kind of time smoothing.\niko{I think that 
\emph{all} this is just a question of the number of samples. Jumping right to the best samples is perfectly fine, or even optimal, if and only if the sample size is large enough, that is, if and only if, the best samples \emph{are} located in some way "over the" optimum. I find it super-important to emphasize that the current distribution heavily biases what we observe (as has been done). The reflecting status-quo part seems however a bit misleading, as the problem appears in particular with a large step-size (and only with a large step-size) also for IGO-ML, which is the opposite of status-quo. I could try to reformulate...}

\NDY{Actually I may even like it better if this and the next paragraph
were simply removed: I do not think we make any clear point in the end!
The whole should-we-lose-more-diversity-or-preserve-diversity argument is
just an equilibrium for the best $\deltat$ and truncation quantile $q_0$
given the current sample size, whether the algorithm is IGO-ML or CEM...
I suggest to simply stop this whole section at ``the conceptually better way to
reduce de learning rate and achieve smoothing in CEM''.}%
\niko{For the moment I cut quite some, but left the part where the point is made that everything we see hinges on the current distribution (unless $N\to\infty$). I think this is a very important point to make and indeed a crucial reason why we need to iterate. }

This phenomenon is most clear when the number of sample points is small.
Then, a full maximum likelihood update risks losing a lot of diversity;
it may even produce a degenerate distribution if the number of sample
points is smaller than the number of parameters of the distribution. On
the other hand, for smaller $\deltat$, the IGO algorithms do, by design,
try to maintain diversity by moving as little as possible from the
current distribution $P_\theta$ in Kullback--Leibler divergence. A full
ML update disregards the current distribution and tries to move as close
as possible to the elite sample in Kullback--Leibler divergence
\citep{CEMtutorial}, thus realizing maximal diversity loss. This makes
sense in a non-iterated scenario such as batch learning but 
is in general unsuited for optimization.}

\subsection{Diversity and Multiple Optima}
The IGO framework emphasizes the relation of natural gradient and diversity:
we argued that IGO provides minimal diversity change for a given
objective function improvement. In particular, provided the initial
diversity is large, diversity is kept at a maximum after the upate. This theoretical
relationship can be observed
experimentally for restricted Boltzmann
machines (Appendix~\ref{sec:RBM}).

On the other hand, using the vanilla gradient does not lead to a balanced
distribution between the two optima in our experiments.
Using the vanilla gradient introduces hidden arbitrary choices between
those points (more exactly between moves in $\Theta$-space). This results
in unnecessary and unwelcome
loss of diversity, and might also be detrimental at later stages in
the optimization.
This may reflect the fact that the Euclidean metric on the
space of parameters, implicitly used in the vanilla gradient, becomes
less and less meaningful for gradient descent on complex distributions.

IGO and the natural gradient are certainly relevant to the
well-known problem of exploration-exploitation balance: as we have seen,
arguably the natural gradient realizes the largest improvement in the objective with the least possible change of diversity in the
distribution.

More generally, like other distribution-based optimization algorithms,
IGO tries to learn a model of where the good
points are. This is typical of machine learning, one of the
contexts for which the natural gradient was studied. The conceptual
relationship of IGO and IGO-like optimization algorithms with machine
learning is still to be explored and exploited.  

\del{Learning a model of where the good points are. Cf discussion the other day:
if exponential number of near-minima with only one true minimum, have a
model of where the near-minima are without
re-running. The example was something like
$f=0.001\sum x_i + (\sum_{i=0}^{N/30-1}x_i)(N/30-\sum_{i=0}^{N/30-1}
x_i)+(\sum_{i=N/30}^{2N/30-1}x_i)(N/30-\sum_{i=N/30}^{2N/30-1} x_i)+\ldots$
which has $2^{30}$ near-minima but only one true minimum. In the best of cases, an RBM with more than 30
hidden units may be able to learn that.

Diversity, balance exploration-exploitation: we have the following things which are equivalent: 
\begin{itemize}
\item minimal KL-divergence between $\theta^t$ and $\theta^{t+\delta t}$
\item minimal diversity change
\end{itemize} 
both for a given change in objective function. This implies minimal
diversity loss, given initially large diversity. }

\del{\subsubsection{Further Related work.}}
\del{somewhere probably not here, relate to work by Berny or Gallagher
taking the vanilla gradient of the KL divergence.\NDY{We concluded it's really
not the same and that explaining the lack of relationship would be rather
confusing, I think.}}

\bigskip

We now present some ideas which we believe would be worth exploring.

\subsection{Adaptive Learning Rate}
Comparing consecutive updates
to evaluate a learning rate or step size is an effective measure. For
example, in back-propagation, the update sign has been used to adapt the
learning rate of each single weight in an artificial neural network \citep{silva1990acceleration}.
In CMA-ES, a step size is adapted
depending on whether recent steps tended to move in a consistent
direction or to backtrack. This is measured by considering the changes
of the mean $m$ of the Gaussian distribution.

For a probability
distribution $P_\theta$ on an arbitrary search space, in general no
notion of mean may be defined. However, it is still possible to define
``backtracking'' in the evolution of $\theta$ as follows.

Consider two successive updates $\delta\theta^t=\theta^t-\theta^{t-\deltat}$ and
$\delta\theta^{t+\deltat}=\theta^{t+\deltat}-\theta^t$. Their
scalar product in the Fisher metric $I(\theta^t)$ is
\[
\langle \delta\theta^t, \delta\theta^{t+\deltat}\rangle=\sum_{ij}
I_{ij}(\theta^t) \, \delta\theta^t_i \, \delta\theta^{t+\deltat}_j 
\enspace.
\]
Dividing by the associated norms will yield the cosine
$\cos \alpha$ of
the angle between $\delta\theta^t$ and $\delta\theta^{t+\deltat}$.

If this cosine is positive, the learning rate $\deltat$ may be increased. If
the cosine is negative, the learning rate probably needs to be
decreased. Various schemes for the change of $\deltat$ can be devised; for instance,
inspired by step size updates commonly used in evolution
strategies,
one can multiply $\deltat$ by $\exp(\beta(\cos\alpha))$ or
$\exp(\beta\sign(\cos\alpha))$,\niko{more general: 
$\exp^\beta(\sign(\cos\alpha)\;|\cos\alpha|^\gamma)$}
where $\beta\approx\min(N/\dim\Theta, 1/2)$. 
As this cosine can be quite noisy, cumulation over several time steps might be advisable.

As before, this scheme is constructed to be robust
w.r.t.\ reparametrization of $\theta$, thanks to the use of the Fisher
metric. However, for large learning rates \deltat, in practice the parametrization might well become relevant.

\del{A consistent direction of the updates does not necessarily
mean that the algorithm is performing well: for instance, when CEM/EMNA
exhibits premature convergence (see above), the parameters consistently
move towards a zero covariance matrix and the cosines above are positive.
This indicates too small steps, as the desired target value for the cosine is zero. }

\subsection{Geodesic Parametrization} While the IGO flow is fully
invariant under $\theta$-reparametrization, an IGO algorithm does depend
on the choice of parametrization for $\theta$, even if for small
$\deltat$ the difference between two IGO algorithms is $O(\deltat^2)$, one order of
magnitude smaller than between IGO and vanilla gradient in general.

So one can wonder how to discretize time in the IGO flow in a fully
intrinsic way, not depending at all on a parametrization for $\theta$. A
first possibility is given by the IGO-ML algorithm
(Definition~\ref{def:IGOML})---this means, for exponential families, that
we can decide to single out the parametrization by expectation
parameters.

Another, more geometric solution is to use \emph{geodesics} on the
statistical manifold. This means we approximate the trajectories of the
IGO flow by successive geodesic segments of length $\deltat$ in the
Fisher metric, where the initial direction of each segment is given by
the direction of the IGO flow \eqref{eq:intrinsicIGOupdate}.
\del{

More precisely, if
\begin{equation*}
Y
=
\sum_{i=1}^{N} \wi \, \left.\widetilde\nabla_\theta \ln
P_\theta(x_i)\right|_{\theta=\theta^t}
=
I^{-1}(\theta^t)\,
\sum_{i=1}^{N} \wi \, \left.\frac{\partial \ln P_\theta(x_i)}{\partial
\theta}\right|_{\theta=\theta^t}
\end{equation*}
is the direction of the IGO update \eqref{eq:intrinsicIGOupdate} at $\theta^t$, one can define
\[
\theta^{t+\deltat}=\exp_{\theta^t} (\deltat.Y)
\]
where $\exp$ is the exponential map of the Riemannian manifold $\Theta$
equipped with the Fisher information metric.

}
This defines an approximation to the IGO flow that depends on the step
size $\deltat$ and sample size $N$, but \emph{not} on any choice of
parametrization. This approach is fully developed
in~\citet{BensadonGIGO}.

\del{
\NDY{The formal definition was a bit much and pretentious given we don't really have
anything to say about it...}

\begin{defi}[Geodesic IGO update]
\label{def:geoIGO}
The \emph{geodesic IGO update} with sample size $N$ and step size $\deltat$
is the following update rule for the parameter $\theta^t$.
At each step, $N$ sample points $x_1,\ldots,x_N$ are drawn according to
the distribution $P_{\theta^t}$. Let $Y$ be the vector in
$\theta$-space giving the direction of the IGO update at $\theta^t$ i.e.
\[
Y
=
\sum_{i=1}^{N} \wi \, \left.\widetilde\nabla_\theta \ln P_\theta(x_i)\right|_{\theta=\theta^t}
=
I^{-1}(\theta^t)\,
\sum_{i=1}^{N} \wi \, \left.\frac{\partial \ln P_\theta(x_i)}{\partial
\theta}\right|_{\theta=\theta^t}
\]
Then the new value $\theta^{t+\deltat}$ of the parameter $\theta$ is
obtained by following the geodesic (in the Fisher metric) whose initial
point is $\theta$ and whose initial velocity is the tangent vector
$Y$,
i.e.
\[
\theta^{t+\deltat}=\exp_{\theta^t} (\deltat.Y)
\]
where $\exp$ is the exponential map of the Riemannian manifold $\Theta$
equipped with the Fisher information metric.
\end{defi}

Then the geodesic IGO update does not depend on a choice of
parametrization for $\theta$.
}

\del{
Practical implementation will depend on being able to compute the geodesics
of the Fisher metric. The equation of geodesics may be computed explicitly in
some particular cases \citep{Burbea1986}, \cite[Chapter
5]{Amarietal1987}, such as Bernoulli distributions or Gaussian
distributions with a fixed mean or with a fixed covariance matrix.
Interestingly, for Gaussian distributions with a fixed mean, the geodesic
update resembles the one in xNES.

When no closed formula for geodesics is available, $\theta^{t+\deltat}$
can always be found by numerically integrating the geodesic equation
starting at
$\theta^t$ with initial speed $Y$. This is, of course,
an added computational cost, but it does not require any calls to the
objective function $f$.
}

\subsection{Finite Sample Size and Noisy IGO Flow} The IGO flow is an
ideal model of the IGO algorithms. But the IGO
flow is deterministic while IGO algorithms are stochastic, depending on a
finite number $N$ of random samples. This might result in important
differences in their behavior and one can wonder if there is a way to
reflect stochasticity directly in the definition of the IGO flow.

The IGO update~\eqref{eq:intrinsicIGOupdate} is a stochastic update
\[
\theta^{t+\deltat}
=
\theta^t+\deltat \sum_{i=1}^{N} \wi \, \left.\widetilde\nabla_\theta \ln P_\theta(x_i)\right|_{\theta=\theta^t}
\]
because the term $\sum_{i=1}^{N} \wi \, \left.\widetilde\nabla_\theta \ln
P_\theta(x_i)\right|_{\theta=\theta^t}$ involves a random sample. As
such, this term has an expectation and a variance.
So for a fixed $N$ and $\deltat$, this random update is a weak
approximation with step size $\deltat$ \cite[Chapter 9.7]{KloedenPlaten92} of a stochastic differential equation on
$\theta$, whose drift is the expectation of the IGO update (which tends
to the IGO flow when $N\to\infty$), and whose
noise term is $\sqrt{\deltat}$ times the square root of the covariance
matrix of the update applied to a normal random vector.

Such a stochastic differential equation, defining a \emph{noisy IGO
flow}, might be a better theoretical object with which to compare the
actual behavior of IGO algorithms, than the ideal noiseless IGO flow.

For instance, this strongly suggests that if we have $\deltat\to 0$ while
$N$ is kept fixed in an IGO algorithm, noise will disappear (compare
Remark~2 in \citealt{Akimoto2012ppsn}).

Second, for large $N$, one expects the variance of the IGO update to
scale like $1/N$, so that the noise term will scale like
$\sqrt{\deltat/N}$.
This formally suggests that, within reasonable bounds,
multiplying or dividing both $N$ and $\deltat$ by the same factor should
result in similar behavior of the algorithm, so that for instance it
should be reasonable to reset
$\deltat$ to $10\deltat/N$ and $N$ to $10$.
(Note that the cost in terms of $f$-calls of these two algorithms is
similar.)

This dependency is reflected in evolution strategies in several
ways, provided $N$ is smaller than the search space dimension.
First,\del{ asymptotic} theoretical results for large search space
dimension on the sphere function, $f(x)=\|x\|^2$, indicate that the
optimal step size \deltat\ for the mean vector is proportional to $N$,
provided the weighting function $w$ is either truncation selection with a fixed truncation ratio \citep{HGBeyer01} or optimal weights \citep{arnold2006weighted}. Second, the learning rate \deltat\ of the covariance matrix in CMA-ES is chosen proportional to $\left(\sum_{i=1}^{N} \wi\right)^2 / \sum_{i=1}^{N} \wi^2 $ which is again proportional to $N$ \citep{hansen2004}. For small enough $N$, the progress per $f$-call is then in both cases rather independent of the choice of $N$. 

These results suggest that in implementations, $N$ can be chosen rather freely, whereas $\deltat$ will be set to $\mathrm{const}\cdot N$. The constant is chosen small enough to ensure stability, 
but as large as possible to maximize speed;
still, $\deltat\le1$ is another constraint for (very) large $N$.

\subsection{Influence of the Fisher Geometry of the Statistical Manifold}
The global Riemannian geometry of the statistical manifold $P_\theta$
might have a bearing on the behavior of stochastic algorithms exploring
this manifold. For instance, the Fisher metric identifies the set of
$1$-dimensional normal distributions $\mathcal{N}(m,\sigma^2)$ with the
two-dimensional hyperbolic plane. The latter has negative curvature. The
sign of curvature has a strong influence on the behavior of random walks
in a Riemannian manifold: in particular, in negative curvature,
successive random errors tend to not compensate as much as in the
Euclidean case (because geodesics diverge more quickly); this might be
relevant to the settings of a stochastic optimization algorithm,
suggesting to use larger sample size or smaller steps when curvature is
negative; \citet{BensadonGIGO} provides first observations in this
direction, associated with negative curvature in the space of Gaussians. 
This is speculative and remains to be explored.

\del{
\begin{itemize}
\item understanding diagonal FIM, intrinsic bias of a parametrization
(=spontaneous drift of the stochastic IGO update on a constant function,
cf. \texttt{diagfisher.tex}), etc.
\item Resampling for Fisher matrix evaluation
\item More RBM experiments with more modes and hidden units, on more
complex functions.
\end{itemize}
}

\section{Implementing an IGO Algorithm with a New Family of Probability
Distributions: Restricted Boltzmann Machines}
\label{sec:RBM}

In this \new{appendix}\del{section} we show how to apply IGO to 
restricted Boltzmann machines (RBMs,
\citealt{Smolensky1986,Ackley1985}), a family of probability distributions
on the discrete hypercube that extends Bernoulli distributions and can
represent correlations between variables.

This first illustrates how to set up an IGO algorithm from a family of
probability distributions for which no fully explicit expression for the
Fisher information matrix is available.

Second, we test the relationship between IGO and diversity of $p_\theta$,
in view of the entropy-maximizing property of the IGO flow
(Proposition~\ref{prop:maxent}). We consider a simple function with two
optima, and compare the behavior of the IGO flow with that of a vanilla
gradient flow. The latter always converges to a single optimum, even
though RBMs allow for multimodal distributions. On the other hand, IGO
seems to always converge to both optima at once, taking advantage of RBM
multimodality.

Finally we interpret this observations by pointing out a non-obvious
breach of symmetry between $0$ and $1$ on $\{0,1\}^d$ for the vanilla
gradient of RBMs; the natural gradient automatically compensates for this.

\subsection{Restricted Boltzmann Machines.} RBMs first define a joint
distribution on $\mathbf{x}\in \{0,1\}^d$ together with a \emph{hidden} or
\emph{latent} variable
$\mathbf{h}\in\{0,1\}^{d_h}$ \citep{Ghahramani2004}. Summation over $\mathbf{h}$ provides the distribution over
$\mathbf{x}$.
The probability associated with an observation $\mathbf{x}=(x_i)\in
\{0,1\}^d$ and latent
variable $\mathbf{h}=(h_j)\in \{0,1\}^{d_h}$ is
\begin{equation}
P_\theta
(\mathbf{x},\mathbf{h})=\frac{e^{-E(\mathbf{x},\mathbf{h})}}{\sum_{\mathbf{x'},\mathbf{h'}}e^{-E(\mathbf{x'},\mathbf{h'})}}
\,,
\qquad P_\theta(\mathbf{x})=\sum_{\mathbf{h}} P_\theta
(\mathbf{x},\mathbf{h})
,\label{eq:rbm-proba}
\end{equation}
where the \emph{energy function $E$} is
\begin{equation}
E(\mathbf{x},\mathbf{h})=-\sum_{i}a_{i}x_{i}-\sum_{j}b_{j}h_{j}-\sum_{i,j}w_{ij}x_{i}h_{j}\;\label{eq:rbm-energy}
\end{equation}
(compare Section \ref{sec:exp}).
The distribution is parametrized by
$\theta = (\mathbf{a}, \mathbf{b}, \mathbf{w})$.
The ``biases'' $\mathbf{a}$ and $\mathbf{b}$ can be subsumed into the
weights $\mathbf{w}$ by introducing variables $x_0$ and $h_0$ always
equal to one, which we implicitly do from now on.

The hidden variable $\mathbf{h}$ introduces correlations between the
components of $\mathbf{x}$ (see Figure~\ref{fig:rbm}). For instance, with
$d_h=1$, the distribution on $\mathbf{x}$ is the sum of two
Bernoulli distributions with $\mathbf{h}$ acting as a ``switch''.

\begin{figure}
\begin{centering}
\includegraphics[scale=.7]{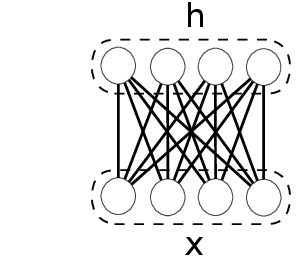}
\par\end{centering}
\caption{The RBM architecture with the observed ($\mathbf{x}$) and latent
($\mathbf{h}$)
variables. In our experiments, a single hidden unit was used. }
\label{fig:rbm}
\end{figure}

\subsection{IGO for RBMs.} RBMs constitute an exponential family with
latent variables (with the statistics $T(\mathbf{x},\mathbf{h})$ being
all the $x_ih_j$). So the IGO flow \eqref{eq:IGOexplicit} is given by
\eqref{eq:hexpIGO} where the first contribution is
the Fisher matrix
\eqref{eq:fishhexp} and the second contribution is the log-probability
derivatives weighted by the objective function values. However, these expressions are not explicit due to the
expectations under $P_\theta$. Still, these expectations can be replaced
with Monte Carlo sampling; this is what we will do.

Actually, when applying IGO to such distributions to optimize an
objective function $f(\mathbf{x})$, we have two choices. The first is to
see the objective function $f(\mathbf{x})$ as a function of
$(\mathbf{x},\mathbf{h})$ where $\mathbf{h}$ is a dummy variable; then we
can use the IGO algorithm to optimize over $(\mathbf{x},\mathbf{h})$
using the distributions $P_\theta(\mathbf{x},\mathbf{h})$. A second
possibility is to marginalize $P_\theta(\mathbf{x},\mathbf{h})$ over the
hidden units $\mathbf{h}$ as in \eqref{eq:rbm-proba}, to define the
distribution $P_\theta(\mathbf{x})$; then we can use the IGO algorithm to
optimize $f$ over $\mathbf{x}$ using $P_\theta(\mathbf{x})$.

These two approaches yield slightly different algorithms. The Fisher
matrix for the distributions $P_\theta(\mathbf{x},\mathbf{h})$ is given by
\eqref{eq:fishexp} (exponential families) whereas the one for the distributions
$P_\theta(\mathbf{x})$ is given by \eqref{eq:fishhexp} (exponential
families with latent variables). For instance, 
with $I_{w_{ij}w_{i'j'}}$ denoting the entry of the Fisher matrix
corresponding to the components $w_{ij}$ and $w_{i'j'}$ of the parameter
$\theta$, from \eqref{eq:fishexp} we get
\begin{equation}
\label{eq:fishrbm}
I_{w_{ij}w_{i'j'}}(\theta)= \Cov_{P_\theta}(x_ih_j,x_{i'}h_{j'})=\E_{P_\theta} [x_i h_j x_{i'} h_{j'}] -
\E_{P_\theta}[ x_i h_j ] \,\E_{P_\theta}[ x_{i'} h_{j'} ] \\
\end{equation}
whereas from \eqref{eq:fishhexp} we get the same expression in which each
$h_j$ is replaced with its expectation $\bar h_j$ knowing $\mathbf{x}$ namely $\bar
h_j=\E_{P_\theta}[h_j|\mathbf{x}]=\left(
1+e^{-b_j-\sum_i x_i w_{ij}}
\right)^{-1}$ and likewise for $h_{j'}$.

Both versions were tested on a small instance and found to
be viable.  However the version using $(\mathbf{x},\mathbf{h})$ is
numerically more stable and requires fewer samples to get a reliable (in
particular, invertible) estimate of the Fisher
matrix, both in practice and theory
\footnote{Indeed, if $I_1(\theta)$ is the Fisher matrix
at $\theta$ in the first approach and $I_2(\theta)$ in the second
approach, we always have $I_1(\theta)\geq I_2(\theta)$ in the sense of
positive-definite matrices. This is because probability distributions on
the pair $(\mathbf{x},\mathbf{h})$ carry more information than their
projections on $\mathbf{x}$ only, and so the Kullback--Leibler distances
will always be larger.  In particular, there exist values of $\theta$ for
which the Fisher matrix $I_2$ is not invertible whereas $I_1$ is.
}.
For this reason we selected the first approach: we optimize $f$
as a function of $(\mathbf{x},\mathbf{h})$ using IGO for the probability
distributions $P_\theta(\mathbf{x},\mathbf{h})$. 

A practical IGO update is thus obtained from \eqref{eq:expIGO} by
replacing the expectations with Monte Carlo samples
$(\mathbf{x},\mathbf{h})$ from $P_{\theta}$. A host of strategies are
available to sample from RBMs \citep{Ackley1985,Salakhutdinov2008,
Salakhutdinov2009a,Desjardins2010}; we simply used Gibbs sampling
\citep{Hinton2002}. So the IGO update reads
\begin{equation}
\theta^{t+\deltat}=\theta^t+\deltat \,
\widehat\Cov(T,T)^{-1} \, \widehat\Cov(T,\Wf)
\end{equation}
where $T$ denotes the vector of all statistics
$T_{ij}(\mathbf{x},\mathbf{h})=x_ih_j$ corresponding to the component
$w_{ij}$ of the parameter $\theta$, and where $\widehat\Cov$ denotes the
empirical covariance over a set of Monte Carlo samples
$(\mathbf{x},\mathbf{h})$ taken from
$P_{\theta^t}(\mathbf{x},\mathbf{h})$. Thus $\hat I=\widehat\Cov(T,T)$ is
the estimated Fisher matrix, a
matrix of size $\dim(\theta)^2$ as in \eqref{eq:fishrbm}. 
$\widehat\Cov(T,\Wf)$ is a vector of size $\dim(\theta)$ giving the correlation, in
the Monte Carlo sample, between the statistics $T_{ij}$ and the ranked
values $\Wf$ of the objective function of the points
$(\mathbf{x},\mathbf{h})$ in the sample: this vector is the sum of
weighted log-probability derivatives in the IGO update
\eqref{eq:IGOupdate}, thanks to \eqref{eq:gradexp}.

Different sample sizes $N_\mathrm{Fish}$ and $N$ can be used to evaluate
$\widehat\Cov(T,T)$ and $\widehat\Cov(T,\Wf)$. The sample size for
$\widehat\Cov(T,\Wf)$
is just the IGO parameter $N$, the number of points on which the
objective function has to be evaluated; it is typically kept small
especially if $f$ is costly. On the other hand it is important to obtain
a reliable (in particular, invertible) estimate of the Fisher matrix:
invertibility requires a number of samples at least, and ideally much
larger than, $\dim(\theta)$, because each point in the sample contributes a
rank-1 term to the empirical covariance matrix $\widehat\Cov(T,T)$.
Increasing $N_\mathrm{Fish}$ does not require additional $f$-calls.

\subsection{An Experiment with Two Optima: IGO, Diversity, and Multimodal
Optimization.} We tested the resulting IGO trajectories on a simple objective function
with two optima on $\{0,1\}^d$, namely, the \emph{two-min function based
at $\mathbf{y}$} defined as
\begin{equation}
\label{eq:twomin}
f_{\mathbf{y}}(\mathbf{x}) = \min \left(  \sum_i \left| x_i - y_i \right|, \sum_i \left| (1-x_i) - y_i ) \right| \right)
\end{equation}
which, as a function of $\mathbf{x}$, has two optima, one at
$\mathbf{x}=\mathbf{y}$ and the other at its binary complement
$\mathbf{x}=\bar{\mathbf{y}}$. The value of the base point $\mathbf{y}$ was
randomized for each independent run.

We ran both the IGO algorithm as described above, and a version using
the vanilla gradient instead of the natural gradient (that is, omitting
the Fisher matrix in the IGO update).

The dimension was $d=40$ and we used an RBM with only one latent variable
($d_h=1$). Therefore $\dim(\theta)=81$.
We used a large sample size of $N=10,000$ for Monte Carlo sampling, so as
to be close to the theoretical IGO flow behavior. We also tested a
smaller, more realistic sample size of $N=10$  (still keeping
$N_\mathrm{Fish}=10,000$), with similar but noisier results.
The selection scheme (Section \ref{sec:igo}) was
$w(q)=\mathbbm{1}_{q\leq1/5}$ (cf.\ \citealt{Rechenberg:94})
so that the best $20\%$ points in the
sample are given weight $1$ for the update.

The RBM was initialized so that at startup, the distribution
$P_{\theta^0}$ is close to uniform on $(\mathbf{x},\mathbf{h})$, in line
with Proposition~\ref{prop:maxent}. Explicitly we set $w_{ij}\gets
\mathcal{N}(0,\frac{1}{d.d_h})$ and then $b_j \gets - \sum_i
\frac{w_{ij}}{2}$ and  $a_i \leftarrow - \sum_j \frac{w_{ij}}{2} +
\mathcal{N}(0,\frac{0.01}{d^2})$ which ensure a close-to-uniform initial
distribution.

Full experimental details, including detailed setup and additional
results, can be found in \new{a previous}\del{the first} version of this article \citep[Section
5]{IGOpreprintv2}. (In
particular, IGO runs are frozen when the estimated Fisher matrix
becomes singular or unreliable.)  The code
used for these experiments can be found at
\url{http://www.ludovicarnold.com/projects:igocode} .

\bigskip

The results show that, most of the time, with IGO
the distribution $P_{\theta^t}$ converges to a distribution giving
positive mass to both optima; on the other hand, over $300$ independent
runs, the same algorithm using the vanilla gradient only converges to one
optimum at the expense at the other, so that the vanilla gradient
\emph{never} exploited the possibility offered by the RBM to create a
bimodal probability distribution on $\{0,1\}^d$.
Figure~\ref{fig:large-pop-sr} shows ten random runs (out of $300$ in our
experiments) of the
two algorithms\del{, and plots the distance between the
$P_{\theta^t}$-sample and the two optima of the objective
function —(reviewer)Explain better what is depicted in fig. 3. When you write "the distance between the $P_{\theta^t}$-sample and the two optima", do you mean
* the distance of the closest individual for each of the optimum, or
* the average distance of all the samples individuals which are closer to the respective optimum, or
* ...?}: for each of the two optima we plot its
distance to the nearest of the points drawn from $P_{\theta^t}$, as a function of time $t$.\footnote{Note that the value of $\deltat$ is not directly
comparable between the natural and vanilla gradients. Theory suggests that at startup the IGO trajectory
with $\deltat$ is most comparable to the vanilla gradient trajectory with
$4\deltat$, because from \eqref{eq:fishrbm} most of the diagonal terms of the Fisher
matrix are equal to $1/4$ and most off-diagonal terms are $0$ at startup. The experiments confirm that this
yields roughly comparable convergence speeds.}

\begin{figure}[t]
\begin{centering}
\includegraphics[width=1.\columnwidth]{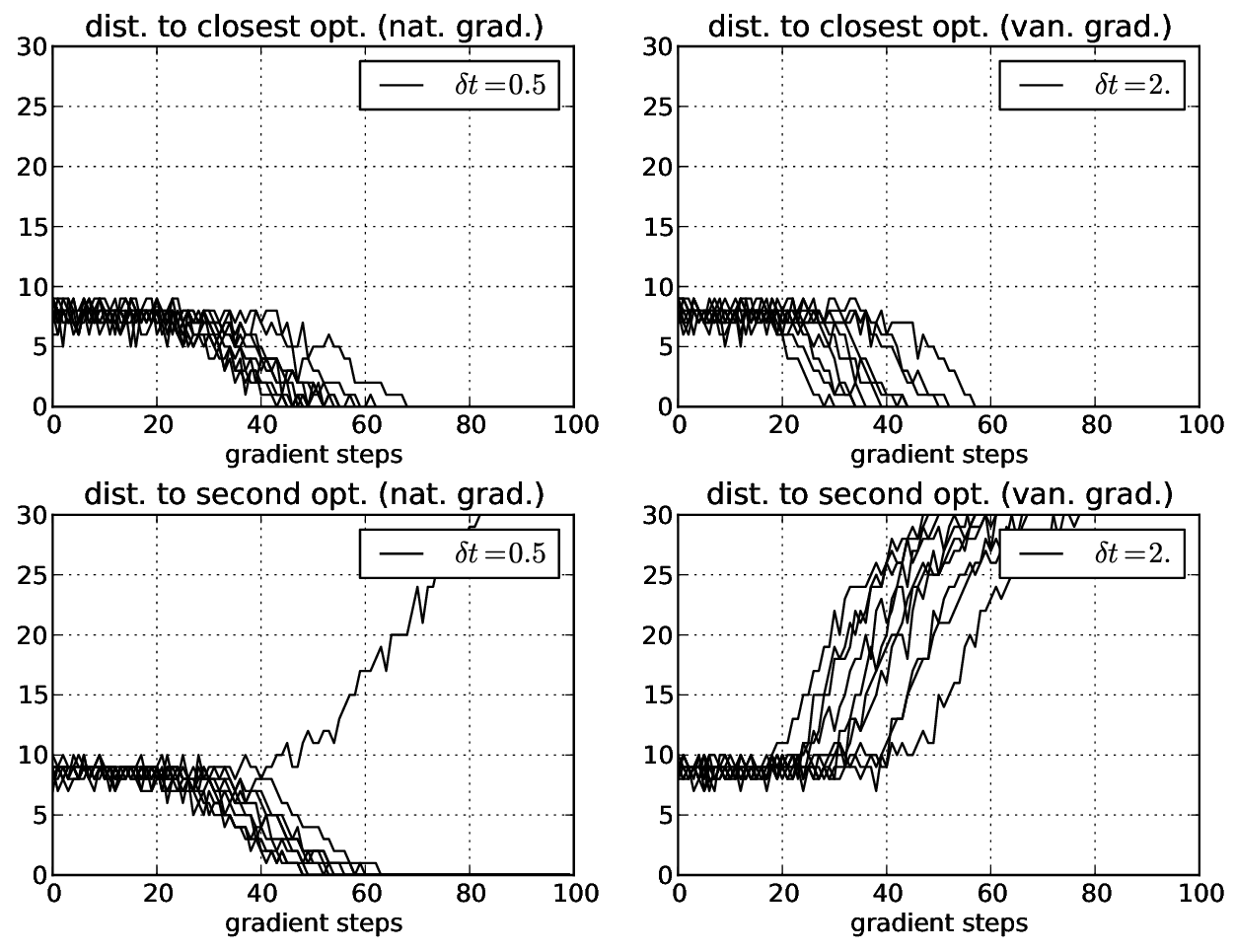}
\par\end{centering}
\caption{Distance to the two optima 
during 10 IGO optimization runs and 10 vanilla gradient runs. For each
optimum, we plot its distance to the nearest point among the samples from
$P_{\theta^t}$ found at each step.%
}
\label{fig:large-pop-sr}
\end{figure}

This is even clearer when looking at the hidden variable $\mathbf{h}$.
With $d_h=1$ the two possible values $h=0$ and $h=1$ can create a sum of
two Bernoulli distributions on $\mathbf{x}$.
Figure~\ref{fig:large-pop-hstats} plots the average value of $h$ in the
sample for IGO and for the vanilla gradient. As can be seen, over IGO
optimization the balance between the two modes is preserved, whereas
using vanilla gradient optimization the zero mode is lost.

\begin{figure}[t]
\centering \includegraphics[width=.9\columnwidth]{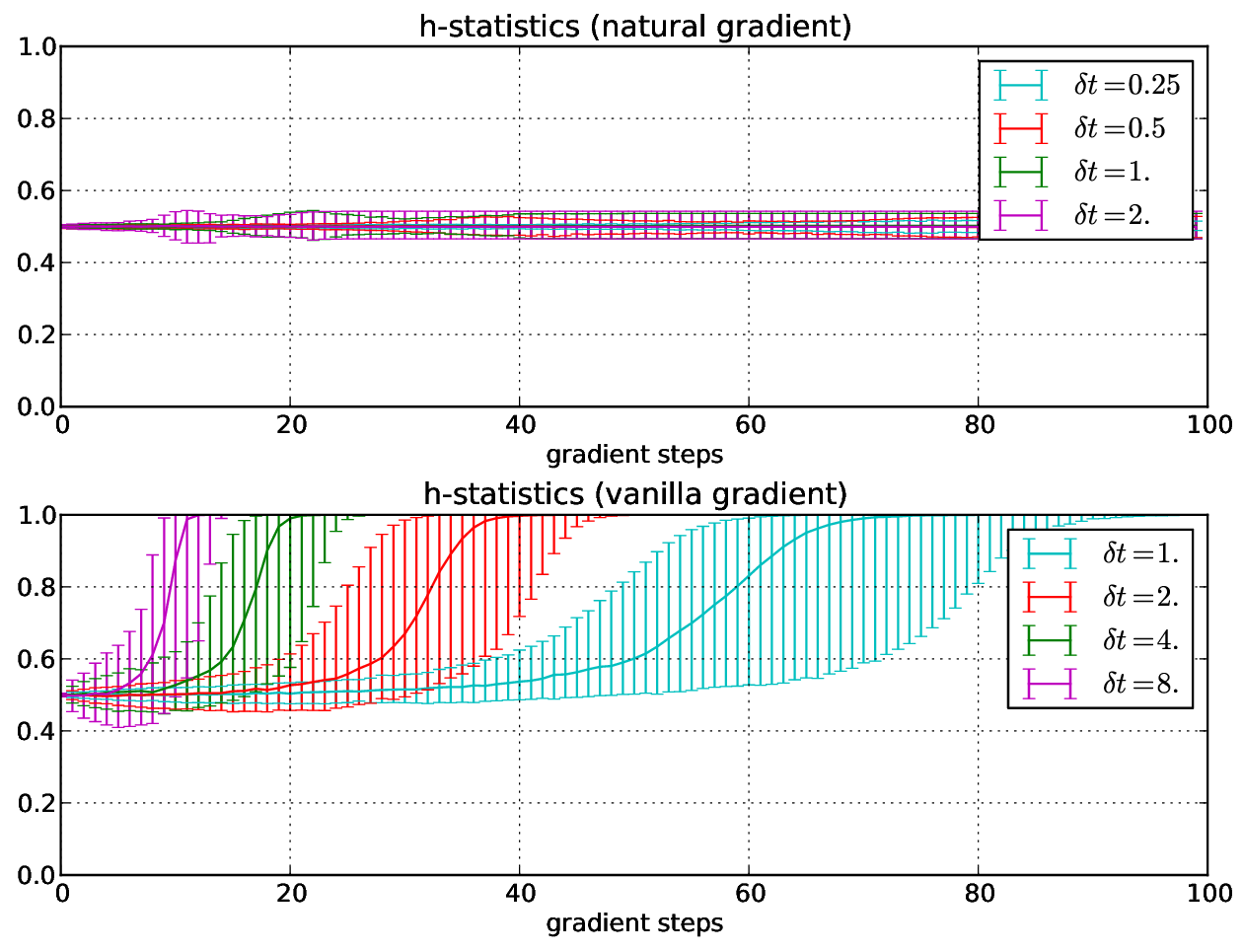}
\caption{Median over 300 runs of the average of $h$ in the sample, for
IGO and vanilla gradient optimization.
Error bars indicate
the 16th and 84th quantile over the runs.}
\label{fig:large-pop-hstats}
\end{figure}

We interpret this as an illustration of Proposition~\ref{prop:maxent}:
for a given improvement on the objective function, IGO will favor
preserving the diversity (entropy) of $P_\theta$. Using more richly
multimodal distributions (e.g., RBMs with $d_h>1$), this might be useful
for multimodal optimization or for optimization situations in which there
are several almost equally deep valleys only one of which contains the
true optimum.

\subsection{Breach of Symmetry by the Vanilla Gradient.} This
experiment  reveals that, curiously,
the vanilla gradient loses multimodality by always setting the hidden
variable $h$ to $1$, not to $0$ (Figure~\ref{fig:large-pop-hstats}).  So the vanilla gradient for RBMs seems to favor
$h=1$.

Of course, exchanging the values $0$ and $1$ for the hidden variables in
a restricted Boltzmann machine still gives a distribution of another
Boltzmann machine. Changing $h_j$ into $1-h_j$ is equivalent to
resetting $a_i\leftarrow a_i+w_{ij}$, $b_j\leftarrow -b_j$, and
$w_{ij}\leftarrow -w_{ij}$.

IGO
and the natural gradient are impervious to such a change
by Proposition~\ref{prop:Xinv}.
But the vanilla gradient implicitly relies on the Euclidean norm on parameter
space, as explained in Section~\ref{sec:grad}. For this norm, the distance between the RBM distributions
$(a_i,b_j,w_{ij})$ and $(a'_i,b'_j,w'_{ij})$ is $\sum_i
\abs{a_i-a'_i}^2 + \sum_j \abs{b_j-b'_j}^2+\sum_{ij}
\abs{w_{ij}-w'_{ij}}^2$. However, the change of variables $a_i\leftarrow
a_i+w_{ij},\; b_j\leftarrow -b_j,\; w_{ij}\leftarrow -w_{ij}$ does \emph{not} preserve this
Euclidean metric. Thus, exchanging $0$ and $1$ for the hidden variables
will result in two different vanilla gradient ascents. The observed asymmetry
on $h$ is a consequence of this implicit asymmetry. 


Of course it is possible to use 
parametrizations for which the vanilla
gradient will be more symmetric: for instance, using $-1/1$ instead of $0/1$ for the
variables, or rewriting the energy as
\begin{equation}
E(\mathbf{x},\mathbf{h})=-\tsum_{i}A_{i}(x_{i}-\tfrac12)-\tsum_{j}B_{j}(h_{j}-\tfrac12)-\tsum_{i,j}W_{ij}(x_{i}-\tfrac12)(h_{j}-\tfrac12)
\end{equation}
with ``bias-free'' parameters $A_i,B_j,W_{ij}$ related to the usual
parametrization by $w_{ij}=W_{ij}$, $a_i=A_i-\frac12 \sum_j
w_{ij}$, and $b_j=B_j-\frac12\sum_i w_{ij}$.
The vanilla gradient might perform better in this parametrization.

However, we adopted the approach of
using a family of probability
distributions found in the literature, with the parametrization
commonly found in
the literature.
We then used the vanilla gradient and the natural gradient
on these distributions---and indeed the vanilla gradient, or an approximation thereof, is routinely
applied to RBMs in the literature to optimize the log-likelihood of
data \citep{Hinton2002,Hinton2006,Bengio2007a}. It was not obvious a
priori (at least for us) that the vanilla gradient ascent favors $h=1$.
Since the first version of this article was written, this phenomenon
has been recognized for Boltzmann machines \citep{CenteringTrick2012}.

This directly illustrates the specific influence of the chosen gradient
(the two implementations only differ by the inclusion of the Fisher
matrix): the natural gradient offers a systematic way to recover symmetry
from a non-symmetric gradient update.

Symmetry alone does not explain why IGO reaches the
two optima simultaneously: a symmetry-preserving stochastic
algorithm could as well end up on either single optimum with 50\%
probability in each run. The diversity-preserving property of IGO
(Proposition~\ref{prop:maxent}) offers a
reasonable interpretation of why this does not happen.

\section{Further Mathematical Properties of the IGO Flow}
\label{sec:moremath}
\new{This appendix provides further mathematical properties of the IGO flow in general and in specific scenarios.}
\subsection{Invariance Properties}
\label{sec:invariance}

Here we formally state the
invariance properties of the IGO flow under various reparametrizations.
Since these results follow from the very construction of the algorithm,
the proofs are omitted.

\begin{prop}[$f$-invariance]
\label{prop:finv}
Let $\phi:\R\to\R$ be a strictly increasing function. 
Then the trajectories of the IGO flow when optimizing the functions
$f$ and $\phi(f)$ are the same.

The same is true for the discretized algorithm with population size $N$ and
step size $\deltat>0$.
\end{prop}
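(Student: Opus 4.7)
The plan is to trace how $f$ enters the IGO update and verify that this dependence factors through quantities that are manifestly invariant under strictly increasing transformations of $f$.

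First, for the continuous flow \eqref{eq:IGOflow}, I would note that $f$ appears only via the adaptive weight $W_{\theta^t}^f$. By \eqref{eq:f-replacement}, this weight is a function of the quantiles $q_{\theta^t}^\pm(x)$ from \eqref{eq:quantiles}. Replacing $f$ by $\phi\circ f$ with $\phi$ strictly increasing, the events $\{f(x') < f(x)\}$ and $\{\phi(f(x')) < \phi(f(x))\}$ coincide (and likewise with $\leq$), so $q_{\theta}^\pm(x)$ is unchanged and $W_{\theta^t}^{\phi\circ f}\equiv W_{\theta^t}^f$ pointwise. The right-hand side of \eqref{eq:IGOflow} is therefore identical in the two problems, and invoking uniqueness of ODE solutions (under the same smoothness hypotheses already needed for Theorem~\ref{thm:consistency}), the two trajectories with common initial condition $\theta^0$ coincide.

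Next, for the discrete algorithm I would observe that the update \eqref{eq:IGOupdate} depends on $f$ only through the weights $\wi$, which by \eqref{eq:wi} are determined by the ranks $\rk(x_i)=\#\{j:\, f(x_j)<f(x_i)\}$. Strict monotonicity of $\phi$ preserves these ranks, and the tie-handling convention in the footnote of \eqref{eq:wi} likewise depends only on the order type of the $f$-values. Since the sampling distribution $P_{\theta^t}$ does not depend on $f$ at all, coupling the two runs so that they draw the same samples $x_1,\dots,x_N$ at each step and arguing by induction on the iteration count shows that the sequences $\theta^0,\theta^{\deltat},\theta^{2\deltat},\dots$ produced when optimizing $f$ and $\phi\circ f$ are almost surely identical.

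No substantive obstacle arises: the proposition is essentially a consequence of the design choice that $f$ enters the update only through ranks and quantiles. The only point requiring care is the interpretation of ``increasing'' as strictly increasing, which ensures that the strict inequalities in \eqref{eq:quantiles} and in the definition of $\rk$ are preserved; this is the standing convention in the $f$-invariance literature referenced in the introduction.
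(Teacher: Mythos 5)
Your argument is correct and is exactly the reasoning the paper has in mind: it omits the proof of Proposition~\ref{prop:finv} precisely because, as you show, $f$ enters the flow only through the quantiles $q_\theta^\pm$ and the algorithm only through the ranks $\rk(x_i)$, both of which are unchanged when $f$ is replaced by $\phi\circ f$ for $\phi$ strictly increasing. Your added care about uniqueness of the ODE solution, the coupling of the sampled points in the discrete case, and the strict-monotonicity reading of ``increasing'' is consistent with the paper's conventions.
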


\begin{prop}[$\theta$-invariance]
\label{prop:thetainv}
Let $\theta'=\phi(\theta)$ be a smooth bijective function of $\theta$ and let
$P'_{\theta'}=P_{\phi^{-1}(\theta')}$.
Let $\theta^t$ be the trajectory of the IGO flow when optimizing a
function $f$ using the distributions $P_\theta$, initialized at $\theta^0$.
Then
the IGO flow trajectory $(\theta')^t$ obtained from the optimization of
the function $f$ using the distributions
$P'_{\theta'}$, initialized at
$(\theta')^0=\phi(\theta^0)$, is the same, namely
$(\theta')^t=\phi(\theta^t)$.
\end{prop}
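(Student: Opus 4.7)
The plan is to show that the right-hand side of the IGO flow ODE \eqref{eq:IGOflow} transforms covariantly under $\theta$-reparametrization, so that the trajectory in $\theta'$-coordinates is exactly the pushforward by $\phi$ of the trajectory in $\theta$-coordinates. The key ingredient is the intrinsic nature of the natural gradient, already emphasized in Section~\ref{sec:grad}, and the fact that the integrand $\int W_{\theta^t}^f(x)\,P_\theta(\d x)$ depends on $\theta$ only through the distribution $P_\theta$ (since the quantile weights $W_{\theta^t}^f$ themselves depend only on the current distribution $P_{\theta^t}$ and on $f$, not on the chosen parametrization).

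First, I would introduce the shorthand $g(\theta):=\int W_{\theta^t}^f(x)\,P_\theta(\d x)$ and $g'(\theta'):=\int W_{\theta^t}^f(x)\,P'_{\theta'}(\d x)$. By the assumption $P'_{\theta'}=P_{\phi^{-1}(\theta')}$ the two functions are related by $g'(\phi(\theta))=g(\theta)$, and the quantile transform $W_{\theta^t}^f$ itself is unchanged because both parametrizations describe the same distribution $P_{\theta^t}=P'_{\phi(\theta^t)}$.

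Next, I would invoke the standard transformation rules for the vanilla gradient and the Fisher matrix under the diffeomorphism $\phi$. Writing $J_{ij}:=\partial \phi_i/\partial \theta_j$ for the Jacobian, the chain rule gives $\partial g/\partial \theta_j = \sum_i J_{ij}\,\partial g'/\partial \theta'_i$, and the Fisher matrix, being the Hessian of a KL divergence that depends only on the underlying distributions, transforms as a $(0,2)$-tensor: $I(\theta)=J^\top I'(\phi(\theta))J$. Combining these and inverting yields
\[
I^{-1}(\theta)\,\frac{\partial g}{\partial \theta}=J^{-1}\bigl(I'(\phi(\theta))\bigr)^{-1}\frac{\partial g'}{\partial \theta'},
\]
which is exactly the statement that the natural gradient is a tangent vector, i.e.\ $\widetilde\nabla_\theta g=(\d\phi)^{-1}\widetilde\nabla_{\theta'}g'$. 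Alternatively, this relation follows directly from Proposition~\ref{prop:KLuphill}: the natural gradient is characterized in a parametrization-independent way by the steepest-ascent property for KL distance, and KL divergence depends only on distributions.

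Finally, if $\theta^t$ solves \eqref{eq:IGOflow}, then setting $(\theta')^t:=\phi(\theta^t)$ and differentiating gives $\frac{\d (\theta')^t}{\d t}=(\d\phi)\frac{\d \theta^t}{\d t}=(\d\phi)\widetilde\nabla_\theta g=\widetilde\nabla_{\theta'}g'$, which is precisely the IGO flow equation in the new parametrization, with the correct initial condition $(\theta')^0=\phi(\theta^0)$. Uniqueness of solutions to the ODE (under the implicit smoothness assumptions) then forces $(\theta')^t=\phi(\theta^t)$. The only mildly delicate step is the transformation law of the Fisher matrix, but this is entirely standard; no real obstacle arises, since the whole construction was designed to make this invariance automatic.
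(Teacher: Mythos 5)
Your proof is correct: the covariance computation (chain rule for the vanilla gradient, $(0,2)$-tensor transformation of the Fisher matrix, hence $\widetilde\nabla_\theta g = J^{-1}\widetilde\nabla_{\theta'}g'$), together with the observation that $W_{\theta^t}^f$ depends only on the distribution $P_{\theta^t}$ and an ODE-uniqueness argument, is exactly the invariance-by-construction argument the paper has in mind. The paper itself omits the proof of Proposition~\ref{prop:thetainv} as following "from the very construction of the algorithm," so your write-up is simply the explicit version of that intended argument, with no gaps.
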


For the algorithm with finite $N$ and $\deltat>0$, invariance under
reparametrization of $\theta$ is only true approximately, in the limit when
$\deltat\to 0$. As mentioned above, the IGO update \eqref{eq:IGOupdate}, with
$N=\infty$, is simply the
Euler approximation scheme for the ordinary differential equation
\eqref{eq:IGOflow} defining the IGO flow.
At each step, the Euler scheme
is known to make an error $O(\deltat^2)$ with respect to the true flow.
This error actually depends on the parametrization of $\theta$.

So the IGO updates for different parametrizations coincide
at first order in $\deltat$, and may, in general, differ by $O(\deltat^2)$.
For instance the difference between the CMA-ES and xNES updates
is indeed $O(\deltat^2)$, see 
Section~\ref{sec:gaussian}.

For comparison, using the vanilla gradient results in a divergence
of $O(\deltat)$ at each step between different parametrizations, so
this divergence could be of the same magnitude as the steps
themselves.

In that sense, one can say that IGO algorithms are ``more
parametrization-invariant'' than other algorithms. 
This stems from their origin
 as a discretization of the IGO flow.

However, if the map $\phi$ is affine then this phenomenon
disappears: parametrizations that differ by an affine map on $\theta$
yield the same IGO algorithm.

\medskip

The next proposition states that, for example, if one uses a family of
distributions on $\R^d$ which is invariant under affine transformations,
then IGO algorithms optimize equally well a function and its image under
any affine transformation (up to an obvious change in the initialization).
This proposition generalizes the well-known corresponding property of CMA-ES
\citep[Proposition 9]{hansen2014principled}.

This invariance under $X$-transformations only holds provided the
$X$-transformation preserves the ``shape'' of the family of probability
distributions $P_\theta$, as follows.

Let us define, as usual, the image (\emph{pushforward}) of a probability distribution $P$ by a
transformation $\phi:X\to X$ as the probability distribution $P'$ such that
$P'(Y)=P(\phi^{-1}(Y))$ for any subset $Y\subset X$ \cite[Chapter
7]{Schilling_measure}. In the continuous
domain, the density of the new distribution $P'$ is obtained by the usual
change of variable formula involving the Jacobian of $\phi$ \cite[Chapter
15]{Schilling_measure}.

We say that a transformation $\phi:X\to X$ \emph{globally preserves}
a family of probability distributions $(P_\theta)$, if the image of any
$P_\theta$ by $\phi$ is equal to some distribution $P_{\theta'}$ in
the same family, and if moreover the correspondence
$\theta\mapsto\theta'$ is locally a diffeomorphism.

\begin{prop}[$X$-invariance]
\label{prop:Xinv}
Let $\phi:X\to X$ be a one-to-one transformation of the search space 
which globally preserves the family of measures
$P_\theta$. Let $\theta^t$ be the IGO flow trajectory for the
optimization of function $f$, initialized at
$P_{\theta^0}$. Let $(\theta')^t$
be the IGO flow trajectory for optimization of $f\circ\phi^{-1}$,
initialized at the image of $P_{\theta^0}$ by $\phi$. Then
$P_{(\theta')^t}$ is the image of $P_{\theta^t}$ by $\phi$.

For the discretized algorithm with population size $N$ and
step size $\deltat>0$, the same is true up to an error of $O(\deltat^2)$
per iteration. This error disappears if the action of the map $\phi$ on $\Theta$ by
pushforward is affine.
\end{prop}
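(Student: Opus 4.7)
}
My plan is to reduce this $X$-invariance to the $f$-invariance (Proposition~\ref{prop:finv}) and $\theta$-invariance (Proposition~\ref{prop:thetainv}) already established. First, using that $\phi$ globally preserves the family $\{P_\theta\}$ together with the non-degeneracy assumption stated after Proposition~\ref{prop:KLuphill}, I would introduce the unique reparametrization $\Phi:\Theta\to\Theta$ determined by $\phi_\ast P_\theta=P_{\Phi(\theta)}$; smoothness and bijectivity of $\Phi$ follow from those of the family and of $\phi$. The claim $P_{(\theta')^t}=\phi_\ast P_{\theta^t}$ then rephrases as $(\theta')^t=\Phi(\theta^t)$.

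Next I would establish the key change-of-variable identity. By the pushforward formula,
\[
\Pr\nolimits_{y\sim P_{\Phi(\theta)}}((f\circ\phi^{-1})(y)<c)=\Pr\nolimits_{x\sim P_\theta}(f(x)<c),
\]
and analogously with $\leq$. Hence the corresponding lower and upper quantile functions satisfy $q^{\pm}_{\Phi(\theta)}\circ\phi=q^{\pm}_\theta$ (the former computed for $f\circ\phi^{-1}$, the latter for $f$), and \refeq{f-replacement} gives $W^{f\circ\phi^{-1}}_{\Phi(\theta)}\circ\phi=W^f_\theta$. By another change of variables,
\[
\int W^{f\circ\phi^{-1}}_{\Phi(\theta^t)}(y)\,P_{\Phi(\theta)}(\d y)=\int W^f_{\theta^t}(x)\,P_\theta(\d x).
\]

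I would then apply $\theta$-invariance in reverse. Define $\eta^t:=\Phi^{-1}((\theta')^t)$; by Proposition~\ref{prop:thetainv}, $\eta^t$ is the IGO flow trajectory for the objective $f\circ\phi^{-1}$ relative to the \emph{reparametrized} family $\eta\mapsto P_{\Phi(\eta)}$, starting from $\eta^0=\theta^0$. The identity above shows that the scalar objective $\int W_{\eta^t}^{\cdot}\,\d P_{\Phi(\eta)}$ driving this flow coincides, as a function on $\Theta$, with the one driving the original IGO flow for $f$ on $\{P_\theta\}$ from $\theta^0$; so both trajectories satisfy the same ODE with the same initial condition and must agree by uniqueness, giving $\eta^t=\theta^t$ and hence $(\theta')^t=\Phi(\theta^t)$.

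For the discretized statement I would use that the $N$-sample IGO update is precisely the Euler scheme for that ODE, so it tracks the true flow up to $O(\deltat^2)$ per iteration. Euler schemes commute exactly with affine changes of coordinates but not with general nonlinear ones, which gives the $O(\deltat^2)$ discrepancy in general and exact equality when $\Phi$ is affine. The main obstacle I anticipate is the verification that $W^{f\circ\phi^{-1}}_{\Phi(\theta)}\circ\phi=W^f_\theta$ also at ties (where \refeq{f-replacement} uses an averaging integral): one must check that the matched intervals $[q^-_\theta(x),q^+_\theta(x)]$ transport correctly under $\phi$. Once this is in hand the rest is bookkeeping on top of the previously proved invariances.
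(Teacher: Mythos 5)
The paper does not actually supply a proof of Proposition~\ref{prop:Xinv} (nor of Propositions~\ref{prop:finv} and~\ref{prop:thetainv}): the authors simply remark that these invariances follow from the very construction of the IGO flow and omit the arguments. So your proposal cannot be compared to a written proof, but it is a correct and sensible way of making the omitted argument explicit: the induced reparametrization $\Phi$ with $\phi_\ast P_\theta=P_{\Phi(\theta)}$ is well defined by the non-degeneracy assumption; your quantile computation gives $q^{\pm}_{\Phi(\theta)}(\phi(x))=q^{\pm}_{\theta}(x)$ exactly, so the tie case of \refeq{f-replacement} is automatic (the averaging interval is literally the same, which settles the ``main obstacle'' you anticipated); and the reduction to Proposition~\ref{prop:thetainv} plus uniqueness of solutions of the flow is sound. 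One step you leave implicit and should state: when you claim that the flow for $f\circ\phi^{-1}$ in the family $\eta\mapsto P_{\Phi(\eta)}=\phi_\ast P_\eta$ and the flow for $f$ in the family $\eta\mapsto P_\eta$ ``satisfy the same ODE'', equality of the scalar objectives is not enough, because the natural gradient also involves the Fisher metric of the family; you need $I^{\phi_\ast P}(\eta)=I^{P}(\eta)$. This does hold --- pushing forward by the bijection $\phi$ preserves Kullback--Leibler divergences, or equivalently the $\eta$-score of the pushed-forward density differs from that of $P_\eta$ only by an $\eta$-independent Jacobian term --- but it is exactly the kind of intrinsic fact the proposition is about, so it deserves a line. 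With that added, and reading the finite-$N$, finite-$\deltat$ claim as the paper implicitly does (couple the samples via $y_i=\phi(x_i)$, note that the ranks and hence the weights $\wi$ agree, and observe that the one-step update commutes with $\Phi$ exactly when $\Phi$ is affine and only up to $O(\deltat^2)$ otherwise, the same Euler-scheme reasoning sketched after Proposition~\ref{prop:thetainv}), your proof is complete.
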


The latter case of affine transforms is well exemplified by CMA-ES: here,
using the variance and mean as the parametrization of Gaussians, the new
mean and variance after an affine transform of the search space are an
affine function of the old mean and variance; specifically, for the affine transformation $A:x\mapsto Ax+b$ we have $(m, C)\mapsto (Am+b, ACA^\T)$.
Another example, on the discrete search space $X=\{0,1\}^d$, is
the exchange of $0$ and $1$: for reasonable choices of the family
$P_\theta$, the IGO flow and IGO algorithms will be invariant under such
a change in the way the data is presented.

\subsection{Speed of the IGO Flow}
\label{sec:speed}

\begin{prop}
\label{prop:speed}
The speed of the IGO flow, i.e.\ the norm of $\frac{\d \theta^t}{\d t}$
in the Fisher metric, is at most $\sqrt{\int_0^1 w^2-(\int_0^1 w)^2}$ where $w$ is the
selection scheme.
\end{prop}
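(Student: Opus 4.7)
The plan is to compute the squared Fisher norm of the IGO velocity directly, reduce it to a variance via Cauchy--Schwarz in the Fisher metric, and then identify the variance in question with the variance of the weighting scheme $w$.

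Concretely, writing $g_i = \int W_{\theta^t}^f(x)\,\frac{\partial \ln P_\theta(x)}{\partial \theta_i}\,P_{\theta^t}(\d x)$ so that $\frac{\d\theta^t}{\d t}=I^{-1}(\theta^t)\,g$, the Fisher norm squared of the IGO velocity is $g^\T I^{-1}(\theta^t)\,g$. The standard identity $\E_{P_{\theta^t}}\bigl[\partial_i \ln P_\theta\bigr]=0$ lets one rewrite $g_i = \Cov_{P_{\theta^t}}(W_{\theta^t}^f,\partial_i \ln P_\theta)$. Then for any vector $u$,
\[
u^\T g = \Cov_{P_{\theta^t}}\!\Bigl(W_{\theta^t}^f,\, \sum_i u_i\,\partial_i \ln P_\theta\Bigr)\leq \sqrt{\Var_{P_{\theta^t}}(W_{\theta^t}^f)}\,\sqrt{u^\T I(\theta^t)\,u},
\]
using Cauchy--Schwarz and the fact that $\Var(\sum_i u_i\,\partial_i \ln P_\theta)=u^\T I(\theta^t)\,u$ by the definition of the Fisher matrix. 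Choosing $u=I^{-1}(\theta^t)\,g$ yields $g^\T I^{-1}(\theta^t)\,g\leq \Var_{P_{\theta^t}}(W_{\theta^t}^f)$, so the Fisher-metric speed is bounded by $\sqrt{\Var_{P_{\theta^t}}(W_{\theta^t}^f)}$.

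It remains to show $\Var_{P_{\theta^t}}(W_{\theta^t}^f)\leq \int_0^1 w^2-\bigl(\int_0^1 w\bigr)^2$. When the law of $f$ under $P_{\theta^t}$ has no atoms, $q_{\theta^t}^+(x)=q_{\theta^t}^-(x)$ is uniform on $[0,1]$, so $W_{\theta^t}^f$ is distributed as $w(U)$ with $U\sim\mathrm{Unif}([0,1])$, giving equality. In the general (possibly tied) case, $W_{\theta^t}^f(x)$ is, by definition \eqref{eq:f-replacement}, the conditional expectation of $w(U)$ given the ``tie class'' containing $x$; the tower property preserves the mean $\int_0^1 w$, and Jensen (or the standard variance decomposition) gives $\E[(W_{\theta^t}^f)^2]\leq\E[w(U)^2]=\int_0^1 w^2$, hence the same bound on the variance.

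The only delicate point is to be careful with the case of atoms/ties in the $f$-distribution; beyond that, the proof is a one-line Cauchy--Schwarz in the Fisher inner product, which is natural here because the Fisher metric is precisely the quadratic form that makes the score vectors $\partial_i\ln P_\theta$ have covariance $I(\theta)$. No further regularity beyond what is already assumed for the IGO flow to be defined is needed.
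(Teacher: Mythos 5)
Your proof is correct and follows essentially the same route as the paper: a Cauchy--Schwarz argument exploiting that the score $\partial_i\ln P_\theta$ is centered with covariance equal to the Fisher matrix (the paper packages this as a lemma bounding $\norm{\E(AX)}$ by $\sqrt{\lambda\Var A}$ in an orthonormal basis for the Fisher metric, which is your choice $u=I^{-1}g$ in coordinate form), followed by the bound $\Var_{P_{\theta^t}}(W_{\theta^t}^f)\leq \Var_{[0,1]}w$ obtained exactly as you do, by writing $W_{\theta^t}^f(x)$ as a conditional expectation of $w$ of a uniform variable over the tie interval $[q^-(x),q^+(x)]$ and applying Jensen. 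No gap to report.
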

The proof is given in Appendix~\ref{sec:proofs}.

A bounded speed means that the IGO flow will not explode in finite time,
or go out-of-domain if the Fisher metric on the statistical manifold
$\Theta$ is complete (for instance, the IGO flow on Gaussian
distributions will not yield non-positive or degenerate covariance
matrices). Due to the approximation terms $O(\deltat^2)$, this may not be
true of IGO algorithms.

\medskip

This speed can be monitored in practice in at least two ways. The first is
just to compute the Fisher norm of the increment
$\theta^{t+\deltat}-\theta^t$ using the Fisher matrix; for small
$\deltat$ this is close to $\deltat \norm{\frac{\d\theta}{\d t}}$ with
$\norm{\cdot}$ the Fisher metric. The
second is as follows: since the Fisher metric
coincides with the Kullback--Leibler divergence up to a factor $1/2$, we
have $\KL{P_{\theta^{t+\deltat}}}{P_{\theta^t}}\approx \frac12 \deltat^2
\norm{\frac{\d\theta}{\d t}}^2$ at least for small $\deltat$. Since it is relatively
easy to estimate $\KL{P_{\theta^{t+\deltat}}}{P_{\theta^t}}$ by comparing
the new and old log-likelihoods of points in a Monte Carlo sample, one can obtain
an estimate of $\norm{\frac{\d\theta}{\d t}}$.

\begin{cor}
\label{cor:KLspeed}
Consider an IGO algorithm with selection scheme $w$,
step size $\deltat$ and sample size $N$. Then, with probability \new{$1$},
\[
\frac{\KL{P_{\theta^{t+\deltat}}}{P_{\theta^t}}}{\deltat^2}\leq \frac12 
\Var_{[0,1]} w+o(1)_{(\deltat,N)\to(0,\infty)}
\enspace.
\]
\end{cor}

For instance, with $w(q)=\1_{q\leq q_0}$ and 
neglecting the error terms, 
an IGO algorithm
introduces at most $\frac12 \deltat^2\, q_0(1-q_0)$ bits of information (in base $e$) per
iteration into the probability distribution $P_\theta$. The proof is
given in Appendix~\ref{sec:proofs}.

Thus, the time discretization parameter $\deltat$ is not just an
arbitrary variable: it has an intrinsic interpretation related to a
number of bits introduced at each step of the algorithm.
This kind of
relationship suggests, more generally, to use the Kullback--Leibler
divergence as an external and objective way to measure learning rates in
those optimization algorithms which use probability distributions.

The result above is only an upper bound. Maximal speed can be achieved
only if all ``good'' points point in the same direction. If the various
good points in the sample suggest moves in inconsistent directions, then
the IGO update will be much smaller. While non-consistent moves are
generally to be expected if $N<\dim\Theta$, it may also be a sign that
the signal is noisy, or that the family of distributions $P_\theta$ is
not well suited to the problem at hand and should be enriched.

As an example, using a family of Gaussian distributions with unkown mean
and fixed identity variance on $\R^d$, one checks that for the
optimization of a linear function on $\R^d$, with the weight
$w(u)=\1_{u<1/2}$, the IGO flow moves at constant speed
$1/\sqrt{2\pi}\approx 0.4$, whatever the dimension $d$. On a rapidly varying sinusoidal function, the moving
speed will be much slower because there are ``good'' and ``bad'' points in all directions.

This may suggest ways to design the selection scheme $w$ to achieve
maximal speed in some instances. Indeed, looking at the proof of the
proposition, which involves a Cauchy--Schwarz inequality, one can see
that the maximal speed is achieved only if there is a linear relationship
between the weights $W_\theta^f(x)$ and the gradient $\widetilde \nabla_\theta \ln
P_\theta(x)$. For instance, for the optimization of a linear function on
$\R^d$ using Gaussian measures of known variance, the maximal speed will
be achieved when the selection scheme $w(u)$ is the inverse of the
Gaussian cumulative distribution function. (In particular, $w(u)$ tends
to $+\infty$ when $u\to 0$ and to $-\infty$ when $u\to 1$.) This is
in accordance with known results: the expected value of the
$i$-th order statistic of $N$ standard Gaussian variates is the optimal
$\wi$ value for Gaussians on the sphere function, $f(x) = \sum_i x_i^2$, where $d\to\infty$ \citep{HGBeyer01,arnold2006weighted}.
For $N\to\infty$, this order statistic converges
to the inverse Gaussian cumulative distribution function. 

\subsection{Noisy Objective Functions}
\label{sec:noisy}

Suppose that the objective function $f$ is non-deterministic: each time
we ask for the value of $f$ at a point $x\in X$, we get a random result.
In this setting we may write the random value $f(x)$  as
$f(x)=\tilde f(x,\omega)$ where $\omega$ is an unseen random
\del{parameter}\new{seed}, and
$\tilde f$ is a deterministic function of $x$ and $\omega$. Without
loss of generality, up to a change of variables we can assume that
$\omega$ is uniformly distributed in $[0,1]$.

We can still use the IGO algorithm without modification in this context.
One might wonder which properties (consistency of sampling, etc.) still
apply when $f$ is not deterministic. Actually, IGO algorithms for noisy
functions fit very nicely into the IGO framework: the following
proposition allows to transfer any property of IGO to the case of noisy
functions.

\begin{prop}[Noisy IGO]\label{prop:noisyIGO}
Let $f$ be a random function of $x\in X$, namely, $f(x)=\tilde
f(x,\omega)$ where $\omega$ is a random variable uniformly distributed in
$[0,1]$, and $\tilde f$ is a deterministic function of $x$ and
$\omega$. Then the two following algorithms coincide:
\begin{itemize}

\item The IGO algorithm \eqref{eq:intrinsicIGOupdate}, using a family of
distributions $P_\theta$ on space $X$, applied to the noisy function $f$,
and where the samples are ranked according to the random observed value of
$f$ (here we assume that, for each sample, the noise $\omega$ is
independent from everything else);

\item The IGO algorithm on space $X\times [0,1]$, using the
family of distributions $\tilde P_\theta=P_\theta\otimes U_{[0,1]}$, applied to
the deterministic function $\tilde f$. Here $U_{[0,1]}$ denotes the
uniform law on $[0,1]$.
\end{itemize}
\end{prop}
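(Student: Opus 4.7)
The plan is to show that, once we couple the two algorithms by using the same underlying random draws, every ingredient of the IGO update rule \eqref{eq:IGOupdate} matches term for term. Concretely, to simulate algorithm~1 we draw $x_1,\dots,x_N$ from $P_{\theta^t}$ and, independently, noise variables $\omega_1,\dots,\omega_N$ uniform on $[0,1]$; the observed fitness of $x_i$ is then $\tilde f(x_i,\omega_i)$. To simulate algorithm~2 we draw $N$ samples $(\tilde x_i,\tilde\omega_i)$ from $\tilde P_{\theta^t}=P_{\theta^t}\otimes U_{[0,1]}$, which by definition of the product distribution is the same joint law as $(x_i,\omega_i)$. So on the coupled probability space the sample points coincide.

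Next I would check the ranking and weights. Algorithm~1 ranks the $x_i$ by the random value $\tilde f(x_i,\omega_i)$; algorithm~2 ranks the pairs $(\tilde x_i,\tilde\omega_i)$ by the deterministic value $\tilde f(\tilde x_i,\tilde\omega_i)$. Under the coupling these are the exact same numbers, so $\rk(x_i)=\rk(\tilde x_i,\tilde\omega_i)$ and hence $\wi$ is the same in both algorithms (handling ties identically via the averaging convention of \eqref{eq:wi}).

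It remains to identify the gradient terms. Since $U_{[0,1]}$ has density $1$ and does not depend on $\theta$, we have $\ln \tilde P_\theta(x,\omega)=\ln P_\theta(x)$ on $X\times[0,1]$, so
\[
\frac{\partial \ln \tilde P_\theta(x,\omega)}{\partial \theta}
=\frac{\partial \ln P_\theta(x)}{\partial \theta}.
\]
Integrating $(\partial_{\theta_i}\ln\tilde P_\theta)(\partial_{\theta_j}\ln\tilde P_\theta)$ against $\tilde P_\theta$ and using Fubini (the $\omega$-integral is trivial) yields $\tilde I(\theta)=I(\theta)$ for the Fisher matrices. Substituting into \eqref{eq:IGOupdate} applied to $(\tilde X,\tilde P_\theta,\tilde f)$ gives exactly the update \eqref{eq:IGOupdate} applied to $(X,P_\theta,f)$ with the observed noisy rankings, which is algorithm~1.

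The only subtle point, and what I would flag as the place deserving care, is the independence assumption: the proposition requires that each time $f(x)$ is queried, a \emph{fresh} independent $\omega$ is drawn, which is precisely what the product construction $\tilde P_\theta=P_\theta\otimes U_{[0,1]}$ encodes. Without this freshness (e.g.\ if the same $\omega$ were reused for several samples, or coupled with $x$), the two algorithms would no longer coincide, since in algorithm~2 the $\tilde\omega_i$ are i.i.d.\ and independent of the $\tilde x_i$ by construction. Once this is spelled out, the equality of the two update rules is a direct consequence of the identifications above.
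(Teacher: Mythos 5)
Your proof is correct and follows essentially the same route as the paper: identify the sample law of $(x_i,\omega_i)$ with that of $\tilde P_\theta=P_\theta\otimes U_{[0,1]}$, observe that the rankings and hence the weights $\wi$ coincide, and note that the $\omega$-part contributes nothing to the $\theta$-gradient (you phrase this via $\ln\tilde P_\theta(x,\omega)=\ln P_\theta(x)$ plus equality of Fisher matrices, the paper directly via $\widetilde\nabla_\theta\ln U_{[0,1]}(\omega)=0$, which is the same observation). Nothing further is needed.
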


The (easy) proof is given in the Appendix~\ref{sec:proofs}.

This proposition states that noisy optimization 
can be modeled as ordinary distribution-based optimization with 
a component of the distribution being independent of the control 
parameter $\theta$. Conversely,\del{is the same as ordinary
optimization using a family of distributions which cannot operate any
selection or convergence over the parameter $\omega$. More generally,} any
component of the search space in which a distribution-based optimization 
algorithm cannot perform selection or specialization will effectively act
as a random noise on the objective function.

As a consequence of this result, all properties of IGO can be transferred to
the noisy case. Consider, for instance, consistency of sampling
(Theorem~\ref{thm:consistency}).
The $N$-sample IGO update rule for the noisy case is identical to the
non-noisy case \eqref{eq:IGOupdate}:
\[
\theta^{t+\deltat}=\theta^t+\deltat \, I^{-1}(\theta^t)\,
\sum_{i=1}^{N} \wi \, \left.\frac{\partial \ln
P_\theta(x_i)}{\partial
\theta}\right|_{\theta=\theta^t}
\]
where each weight $\wi$ computed from \eqref{eq:wi}  now
incorporates noise from the objective function because the rank of $x_{i}$ is computed on the random function, or equivalently on the deterministic function $\tilde f$: $\rk(x_{i})= \# \{j, \tilde f(x_{j},\omega_{j}) < \tilde f(x_{i},\omega_{i}) \} $. 

Consistency of sampling (Theorem~\ref{thm:consistency}) thus takes the
following form through Proposition~\ref{prop:noisyIGO}:
When $N\to\infty$, the $N$-sample IGO update rule on the noisy function $f$ converges with probability $1$ to the update rule
\begin{align}\nonumber
\theta^{t+\deltat}& =\theta^t+\deltat \, \widetilde\nabla_{\!\theta} \int_{0}^{1} \int
W_{\theta^t}^{\tilde f}(x,\omega)\, P_{\theta}(\d x) \, \d \omega \\\label{eq:IGOlimitnoisy}
& = \theta^t+\deltat \, \widetilde\nabla_{\!\theta} \int
\bar W_{\theta^t}^{f}(x)\, P_{\theta}(\d x)
\end{align}
where $\bar W^{f}_{\theta}(x) = \E_{\omega}W_{\theta}^{\tilde f}(x,\omega)$.
Thus when $N\to\infty$ the update becomes deterministic as the effects of noise
get averaged, as could be expected. 

Thus, applying the IGO algorithm to noisy objective functions as in Proposition~\ref{prop:noisyIGO}
requires defining the IGO flow for noisy objective functions as
\begin{equation}
\label{eq:noisyIGOflow}
\frac{\d\theta^t}{\d t}=\widetilde\nabla_{\!\theta} \int
\bar W_{\theta^t}^{f}(x)\, P_{\theta}(\d x)
\end{equation}
where $\bar W^{f}_{\theta}(x)=\E_{\omega}W_{\theta}^{\tilde f}(x,\omega)$
is the \emph{average} weight of $x$ over the values $f(x)$. 
Thus, the effects of noise remain visible even for 
$N\to\infty$, as $\bar W$ is in general flatter than $W$.

Note that there is another possible way to define the IGO flow for noisy
objective functions. The flow \eqref{eq:noisyIGOflow} amounts to taking a
point $x$, computing a (random) value $f(x)$, computing the level
$q_\theta^<(x)=\Pr\nolimits_{x'\sim P_\theta}(f(x')<f(x))$ of this random
value $f(x)$, applying the selection scheme $w$, and then averaging. In
this approach the value $q_\theta^<(x)$ is a random variable depending on
the value of $f(x)$.  Alternatively, we could define the\del{ quantile} value
$q_\theta(x)$ by applying Definition~\ref{def:quantiles} unchanged,
namely, $q_\theta^<(x) = \Pr\nolimits_{x'\sim P_\theta}(f(x')<f(x))$ in
which $f(x)$ itself is treated as a random variable under the probability
$\Pr$, so that $q_\theta^<(x)$ is deterministic and takes into account
all possible values for $f(x)$. Then we could apply the selection
scheme $w$ to this value $q_\theta^<(x)$ as in
Definition~\ref{def:quantiles} and define the IGO flow accordingly. The
value of $q_\theta^<(x)$ in this second version is the expected value of
$q_\theta^<(x)$ in the first version.

When the selection scheme $w$ is affine, these two approaches coincide;
however this is not the case in general, as the second version averages the
$q$-values while the first version averages the weights $w(q)$.  The second
version would be the $N\to\infty$ limit of a slightly more complex
algorithm using several evaluations of $f$ for each sample $x_i$ in order
to compute noise-free average ranks and quantiles. The first version has
the advantage of leaving the algorithm unchanged and of inheriting
properties from the deterministic case via
Proposition~\ref{prop:noisyIGO}.

\subsection{The IGO Flow for Linear Functions on
$\{0,1\}^d$ and $\R^d$}
\label{sec:examples}

In this section we take a closer look at the IGO differential equation solutions of
\eqref{eq:IGOflow} for some simple examples of \new{objective}\del{fitness} functions, for
which it is possible to obtain exact information about these IGO
trajectories.

We start with the discrete
search space $X = \{ 0, 1\}^{d}$ and linear functions (to be minimized)
defined as $f(x) = c - \sum_{i=1}^{d} \alpha_{i} x_{i}$ with $\alpha_{i}
> 0$. (So maximization of the classical onemax function $f_{\rm
onemax}(x)=\sum_{i=1}^{d} x_{i}$ is covered by setting $\alpha_{i}=1$.)
The differential equation of the IGO flow \eqref{eq:IGOflow} for the Bernoulli measures
$P_{\theta}(x)=p_{\theta_{1}}(x_{1}) \ldots p_{\theta_{d}}(x_{d})$
defined on $X$ is the $\deltat\to 0$ limit of the IGO-PBIL update
\eqref{eq:PBILIGOsimple}:
\begin{equation}\label{eq:igopbil}
\frac{\d \theta_{i}^{t}}{\d t} = \int W_{\theta^{t}}^{f}(x) (x_{i} - \theta_{i}^{t}) P_{\theta^{t}}(\d x) \eqd g_{i}(\theta^{t}) \enspace.
\end{equation}
Although finding the analytical solution of the differential equation
\eqref{eq:igopbil} for any initial condition seems a bit intricate, we
show that  the solution of \eqref{eq:igopbil} converges to $(1,\ldots,1)$
starting from any initial $\theta \in (0,1]^d$. Note that starting
with $\theta_i=0$ for some $i$ prevents IGO (and PBIL) from sampling any
$1$ for component $i$, so that the components of $\theta$ that are equal
to $0$ at startup will always stay so; in that case the IGO flow effectively works
as in a smaller-dimensional space.

To prove convergence to $(1,\ldots,1)$, we establish the following result:
\begin{lem}\label{lem:igo-pbil}
Assume that the selection scheme $w: [0,1] \to \R$ is bounded, that $w$ is a nonincreasing function and that there  exists $q_0, q_1 \in (0,1)$ such that $w(q_0) \not= w(q_1)$.
On $f(x) = c - \sum_{i=1}^{d} \alpha_{i} x_{i}$,
the solution of \eqref{eq:igopbil} satisfies $
\sum_{i=1}^{d} \alpha_{i} \frac{\d \theta_{i}^{t}}{\d t} \geq 0 $; moreover $ \sum \alpha_{i} \frac{\d \theta_{i}^{t}}{\d t} = 0 $ if and only $\theta \in \{0,1\}^d$.
\end{lem}
\begin{proof}
We compute $\sum_{i=1}^{d} \alpha_{i} g_{i}(\theta^{t})$ and find that 
\begin{align*}
 \sum_{i=1}^{d} \alpha_{i} \frac{\d \theta_{i}^{t}}{\d t} & = \int
 W_{\theta^{t}}^{f}(x) \left(\sum_{i=1}^{d} \alpha_{i} x_{i} -
 \sum_{i=1}^{d} \alpha_{i}  \theta_{i}^{t}\right) P_{\theta^{t}}(\d x) \\
 & = \int W_{\theta^{t}}^{f}(x) (f(\theta^{t}) - f(x)) P_{\theta^{t}}(\d x) \\
 & = \E[ W_{\theta^{t}}^{f}(x) ] \,\E[ f(x) ] - \E[ W_{\theta^{t}}^{f}(x) f(x) ]
\end{align*}
where the expectations are taken under $P_{\theta^t}$. Using
Lemma~\ref{lem-igopbil-intermediate} \new{below}, we have that $\E[ W_{\theta^{t}}^{f}(x) ] \,\E[ f(x) ] - \E[ W_{\theta^{t}}^{f}(x) f(x) ] \geq 0$ and in addition $\E[ W_{\theta^{t}}^{f}(x) ] \,\E[ f(x) ] - \E[ W_{\theta^{t}}^{f}(x) f(x) ] = 0$ if and only if $\theta^t \in \{0,1\}^d$.
\del{In addition, $W_{\theta^{t}}^{f}(x)$ is a non-increasing bounded function
in the variable $f(x)$, and so
 $- W_{\theta^{t}}^{f}(x)$ and $f(x)$ are positively correlated (see \cite[Chapter~1]{Thorisson} for a proof of this result), i.e.
$$
 \E[ - W_{\theta^{t}}^{f}(x) f(x) ] \geq  \E[ - W_{\theta^{t}}^{f}(x)] \E[ f(x) ]
$$
with equality if and only if \del{$\theta^{t}=(0,\ldots,0)$ or
$\theta^{t}=(1,\ldots,1)$}{$\theta^t \in \{0,1\}^d$}. \TODO[I don't
think this "if and only if" is really obvious in the end... we need to
give an argument here. ]Thus $\sum_{i=1}^{d} \alpha_{i} \frac{\d \theta_{i}^{t}}{\d t} \geq 0$ {and $\sum_{i=1}^{d} \alpha_{i} \frac{\d \theta_{i}^{t}}{\d t} =0$ if and only if $\theta \in \{0,1\}^d$}. }
\end{proof}

\begin{lem}\label{lem-igopbil-intermediate}
Under the assumptions of Lemma~\ref{lem:igo-pbil}
\begin{equation}
\E[ - W_{\theta^{t}}^{f}(x) f(x) ] \geq  \E[ - W_{\theta^{t}}^{f}(x)] \E[ f(x) ]
\end{equation}
with equality if and only if  $\theta^t \in \{0,1\}^d$. Here the
expectations are taken under $x\sim P_{\theta^t}$.
\end{lem}
\begin{proof}
We want to prove that $\Cov[- W_{\theta^{t}}^{f}(x), f(x)] \geq 0 $ with equality if and only if $\theta^t \in \{0,1\}^d$.
Let us define the random variable $Z=f(x)$ when $x \sim P_{\theta^t}$. Remark that $W_{\theta^t}^f(x) = \g(Z)$ where
\begin{equation}\label{eq:gfunction}
\g(z) = \frac{1}{F^{\leq}_{\theta^t}(z) - F^{<}_{\theta^t}(z) } \int_{F^{<}_{\theta^t}(z)}^{F^{\leq}_{\theta^t}(z)}  w(q) dq \enspace,
\end{equation}
where $F^{\leq}_{\theta^t}(z)= \Pr_{x'\sim P_{\theta^t}}( f(x') \leq z) $ and $F^{<}_{\theta^t}(z)= \Pr_{x'\sim P_{\theta^t}}( f(x') < z) $. That is $\g$ is the average of $w$ on the interval $(F^{<}_{\theta^t}(z), F^{\leq}_{\theta^t}(z))$. Since $w$ is nonincreasing, the function $\g$ is also nonincreasing. We have the following equality
$$
\Cov[- W_{\theta^{t}}^{f}(x), f(x)] 
=
\Cov[ - \g(Z), Z] 
$$ 
where $-\g$ is nondecreasing.
Following \cite[Chapter~1]{Thorisson}, we write 
$$
\Cov[ - \g(Z), Z ] = \frac{1}{2} \Cov[-\g(Z) + \g(Z'),Z - Z']
$$
where $Z$ and $Z'$ are independent following the distribution $P_{\theta^t}$. Given that both the mean of $-\g(Z) + \g(Z')$ and $Z-Z'$ are zero
$$
\Cov[ - \g(Z), Z ] =  \frac12  E[(-\g(Z) + \g(Z')) (Z - Z' ) ] = \frac12 E[|-\g(Z) + \g(Z')| |Z - Z'  | ] \geq 0 \enspace,
$$
where the last equality holds because $-\g$ is nondecreasing. This
proves the main statement of the lemma.

We will now show that if $\thetat \notin \{0,1\}^d$, then $ E[|-\g(Z) + \g(Z')| |Z - Z'  | ] > 0$ and thus consequently $\Cov[- W_{\theta^{t}}^{f}(x), f(x)] > 0 $. We simply need to show that $Z$ can take with strictly positive probabilities $p_1$ and $p_2$ two distinct values $z_1$ and $z_2$ such that $\g(z_1)$ and $\g(z_2)$ are distinct. We will then have
$$
E[|-\g(Z) + \g(Z')| |Z - Z'  | ] \geq \left( | - \g(z_1)  + \g(z_2)| | z_1 - z_2 | \right) p_1 p_2 > 0 \enspace.
$$
Let us assume that one single $\theta_i$ belongs to $(0,1)$ and all the
others are either $0$ or $1$. We can assume without loss of generality that $\theta_1 \in
(0,1)$. Then $f(x)$ takes (only) two distinct values with positive
probability; let $z_1$ be the one associated with $x_1=1$ and $z_2$
with $x_1=0$. Then $z_1 < z_2$ thanks to the definition of $f$ and
because $\alpha_i>0$. Moreover, unravelling the definitions we find $\g(z_1) = \frac{1}{\theta_1}\int_0^{\theta_1}w(q) dq$ and $\g(z_2) = \frac{1}{1-\theta_1} \int_{\theta_1}^1 w(q) dq $.

The assumption states that there exists $q_0, q_1 \in (0,1)$ such that
$w(q_0) \not= w(q_1)$. Assume without loss of generality that $q_0<q_1$
and $w(q_0)>w(q_1)$. Either $w(q_1) \geq w(\theta_1)$ or $w(q_1) <
w(\theta_1)$. In the first case, then $\theta_1>q_0$ and
\[
\frac{1}{\theta_1} \int_0^{\theta_1} w(q) dq = \frac{1}{\theta_1} \left(
\int_0^{q_0} w(q) dq + \int_{q_0}^{\theta_1} w(q) dq \right) >
\frac{1}{\theta_1}  \left( w(q_0) q_0 + w(\theta_1)(\theta_1-q_0) \right)>
w(\theta_1)\enspace,
\]
Meanwhile, $\g(z_2) = \frac{1}{1-\theta_1} \int_{\theta_1}^1 w(q) dq \leq w(\theta_1)$ and thus
$$
\g(z_1) > \g(z_2) \enspace.
$$
If  $w(q_1) < w(\theta_1)$, then $q_1>\theta_1$ and
\begin{multline}
\g(z_2) = \frac{1}{1-\theta_1} \int_{\theta_1}^1 w(q) dq = \frac{1}{1-\theta_1} \left( \int_{\theta_1}^{q_1} w(q) dq + \int_{q_1}^1 w(q) dq \right) \\ \leq \frac{1}{1-\theta_1} (w(\theta_1)(q_1 - \theta_1) + w(q_1)(1 - q_1)) < w(\theta_1) 
\end{multline}
and thus $\g(z_2) < w(\theta_1) \leq \g(z_1)$ as well.

In the case where we have not only two but $l$ distinct fitnesses that can be sampled with strictly positive probabilities, say $z_1 < \ldots < z_l$, we can show similarly that $\g(z_1) > \g(z_l) $.
\end{proof}

We are now ready to state the convergence of the trajectories of IGO-PBIL (and cGA) update in the limit of $\deltat\to 0$.

\begin{prop}\label{stability-igopbil}
Assume that the selection scheme $w: [0,1] \to \R$ is bounded, that $w$ is a nonincreasing function and that there  exists $q_0, q_1 \in (0,1)$ such that $w(q_0)\not=w(q_1)$.
On the linear functions $f(x) = c - \sum_{i=1}^{d} \alpha_{i} x_{i}$, the
critical point $\theta=(1,\ldots,1)$ of the
IGO-PBIL differential equation \eqref{eq:igopbil} is stable and for any initial condition $\theta\in (0;1]^d$, the continuous-time trajectory solving \eqref{eq:igopbil} converges to $(1,\ldots,1)$.
\end{prop}

\begin{proof}
Remark first that if we start at $\theta^0 \in [\epsilon,1]^d$, then
the solution of \eqref{eq:igopbil} stays in $[\epsilon,1]^d$. Indeed,
the trajectory cannot go out of $[0,1]^d$ and, in
addition, for $\theta \in [\epsilon, 1)^d$ we have  $g_i(\theta) \geq 0 $ so
that the trajectory of \eqref{eq:igopbil} cannot go out of
$[\epsilon,1]^d$. The proof that $g_i(\theta) \geq 0$ for $\theta \in
[\epsilon, 1)^d$ is similar to the proof that $\Cov[-
W_{\theta^{t}}^{f}(x), f(x)] \geq 0 $ in
Lemma~\ref{lem-igopbil-intermediate}: namely, we can write $g_i(\theta) = \int
\E[ W_{\theta^{t}}^{f}(x) | x_i ] (x_i - \theta_i) p_{\theta_i}(x_i) dx_i
= \theta_i (1 - \theta_i) [h(1) - h(0)]$ where $h(x_i) =   \E[
W_{\theta^{t}}^{f}(x) | x_i ] = \E[\g(f(x)) | x_i]$ where $\g$ is defined
in \eqref{eq:gfunction}. Then $h(1) \geq h(0)$ comes from the increase of
$\g$ when going to better function values.

Consider now on $[\epsilon,1]^d$, the non-negative function
$V(\theta)= \sum_{i=1}^{d} \alpha_{i} - \sum_{i=1}^{d} \alpha_{i}
\theta_{i}$. Then  $V^{*}(\theta) \deq \nabla V
(\theta) \cdot g(\theta) \leq 0 $, and $V^{*}(\theta)$ equals zero
only for $\theta \in \{0,1\}^d$ according to
Lemma~\ref{lem:igo-pbil}.
 Hence
for $\theta \in [\epsilon,1]^d$, the function $V$ is a Lyapunov function
\citep{Khalil1996book,
AgarwalDonal}, which
is minimal at $(1,\ldots,1)$ and  such that $V^*(\theta) < 0$ except
at $(1,\ldots,1)$.  Therefore the trajectory of \eqref{eq:igopbil} will
converge to $(1,\ldots,1)$ as $t$ goes to infinity (the proof is similar to that of \citep[Theorem 4.1]{Khalil1996book}). Given that this holds for any $\epsilon > 0$, we can conclude that the trajectory of  \eqref{eq:igopbil} converges to $(1,\ldots,1)$ starting from any $\theta \in (0,1]^d$.
\end{proof}


\newcommand{\NN}{\mathcal{N}}
\newcommand{\Id}{I_{d}}

We now consider on $\R^{d}$ the family of multivariate normal
distributions $P_{\theta} = \NN(\m,\sigma^{2} \Id)$ with covariance
matrix equal to $\sigma^{2} \Id$. The parameter $\theta$ thus has $d+1$
components $\theta=(\m,\sigma) \in \R^{d} \times \R$. The natural
gradient update using this family was derived in \citet{Glasmachers2010};
from this we can derive the IGO differential equation which reads:
%
\newcommand{\sig}{\sigma}
\newcommand{\sigtilde}{\tilde{\sigma}}
\begin{align}
\frac{\d \m^{t}}{\d t} & = \int_{\R^{d}} W_{\theta^{t}}^{f}(x)(x - \m^{t})P_{\NN(\m^{t},(\sigma^{t})^{2} \Id)}(x) \d x \\
\frac{\d \sigtilde^{t}}{\d t} & =  \int_{\R^{d}} \frac{1}{2d} \left\{
\sum_{i=1}^{d} \left( \frac{x_{i} - \m^{t}_{i}}{\sig^{t}} \right)^{2}  -
1 \right\} W_{\theta^{t}}^{f}(x) P_{\NN(\m^{t},(\sigma^{t})^{2} \Id)}(x)
\d x
\end{align} 
where $\sig^{t}$ and $\sigtilde^{t}$ are linked via $\sig^{t} =
\exp(\sigtilde^{t})$ or $\sigtilde^{t} = \ln(\sig^{t})$. Denoting $\NN$ a
random vector following a centered multivariate normal distribution with
identity covariance matrix we can rewrite the gradient flow as
\begin{align}\label{eq:esssone}
\frac{\d \m^{t}}{\d t} & = \sig^{t} \E \left[ W_{\theta^{t}}^{f}(\m^{t} + \sig^{t} \NN) \NN \right] \\\label{eq:essstwo}
\frac{\d \sigtilde^{t}}{\d t} & =  \E \left[ \frac12 \left(\frac{\|\NN \|^{2}}{d}  - 1 \right) W_{\theta^{t}}^{f}(\m^{t} + \sig^{t} \NN) \right] \enspace.
\end{align} 
Let us analyze the solution of the previous system on linear functions.
Without loss of generality (thanks to invariance) we can consider the
linear function $f(x) = x_{1}$. We have
$$
W_{\theta^{t}}^{f}(x) = w(\Pr( \m^{t}_{1} + \sigma^{t} Z_{1} < x_{1}))
$$
where $Z_{1}$ follows a standard one-dimensional normal distribution and thus
\newcommand{\CC}{\mathcal{F}}
\begin{align}
W_{\theta^{t}}^{f}(\m^{t} + \sig^{t} \NN) & = w(\Pr\nolimits_{Z_{1} \sim \NN(0,1)}(  Z_{1} < \NN_{1})) \\
& = w(\CC(\NN_{1}))
\end{align}
with $\CC$ the cumulative distribution of a standard normal distribution,
and $\NN_{1}$ the first component of $\NN$. The differential equation thus simplifies into
\begin{align}
\frac{\d \m^{t}}{\d t} & = \sig^{t} \left( \begin{array}{c}
 \E \left[ w(\CC(\NN_{1})) \NN_{1} \right]  \\
0  \\
\vdots \\
0  \end{array} \right) \\
\frac{\d \sigtilde^{t}}{\d t} & = \frac{1}{2d}\E \left[ (|\NN_{1} |^{2}  - 1 ) w(\CC(\NN_{1}))\right] \enspace.
\end{align} 

Consider, for example, the truncation selection function, i.e.\ $w(q)= 1_{q \leq q_{0}}$ where $q_{0} \in (0,1)$. 
(Within the IGO algorithm, this results in so-called
\emph{intermediate recombination} of the $q_0\times N$ best samples.) 
We find that

\begin{align}
\frac{\d \m^{t}_{1}}{\d t} & = \sig^{t} \E[\NN_{1} 1_{\{ \NN_{1} \leq \CC^{-1}(q_{0}) \}} ] \eqd \sig^{t} \beta \\
\frac{\d \sigtilde^{t}}{\d t} & = \frac{1}{2d} \left( \int_{0}^{q_{0}} \CC^{-1}(u)^{2} \d u - q_{0} \right) \eqd \alpha \enspace.
\end{align}

The solution of the IGO flow for the linear function $f(x)=x_{1}$ is thus given by
\begin{align}
\m^{t}_{1} & = \m^{0}_{1} + \frac{\sigma^{0}\beta}{\alpha} \exp(\alpha t) \\
\sig^{t} & = \sig^{0} \exp(\alpha t) \enspace .
\end{align}
The coefficient $\beta$ is negative for any $q_0<1$.
The coefficient $\alpha$ is  positive if and only if $q_{0} <
1/2$ by a simple calculus argument\footnote{Indeed $ \alpha =  \frac{1}{2 d \sqrt{2\pi}}  \int_{- \infty}^{\CC^{-1}(q_{0})} (x^{2} -1) \exp(-x^{2}/2) \d x = \frac{1}{2 d \sqrt{2\pi}}  g(\CC^{-1}(q_{0})) $ where $ g(y)= \int_{- \infty}^{y} (x^{2} -1) \exp(-x^{2}/2) \d x$.  
Using $g(0)=0$ and $\lim_{y \to \pm \infty} g(y)=0$, and studying the
sign of $g'(y)$, we find that $g$ is positive for $y<0$ and negative for
$y>0$. Since
$\CC^{-1}(q_{0})<0$  if and only if $q_{0} < 1/2$, we find that $\alpha =
\frac{1}{2 d \sqrt{2\pi}}  g(\CC^{-1}(q_{0})) $ is positive if
and only if $q_{0} < 1/2$.  }; 
this corresponds to  selecting less than half of the
sampled points in an ES. In this case the step size $\sigma^{t}$ grows
exponentially fast to infinity and the mean vector moves along the
gradient direction towards minus $\infty$ at the same
rate. If more than half of the points are selected, $q_{0} \geq 1/2$, the
step size will decrease to zero exponentially fast and the mean vector
will get stuck (compare also \citealt{grahl2005behaviour,hansen2006ecj,posik2008preventing}).

For an analysis of the solutions of the system of differential equations \eqref{eq:esssone} and \eqref{eq:essstwo} on more complex functions, namely convex-quadratic functions and twice continuously differentiable functions, we refer to \citet{Akimoto2012ppsn}.

\section{Proofs}
\label{sec:proofs}
\new{This final appendix provides longer proofs of propositions and theorems of the paper.} 

\subsection{Proof of Proposition~\ref{prop:maxent}}

We begin with a calculus lemma which will also be used for the proof of
Theorem~\ref{thm:IGOML}. The proof is omitted and amounts to writing the
maximum of a quadratic function obtained by the second-order Taylor
expansion of $f$.

\begin{lem}
\label{lem:gradargmax}
Let $f$ be real-valued function on a finite-dimensional vector space $E$
equipped with a positive definite quadratic form $\norm{\cdot}^2$. Assume
$f$ is smooth and has at most quadratic growth at infinity. Then, for any $x\in E$, we have
\[
\nabla f(x)=\lim_{\eps\to 0}\frac1\eps \argmax_h \left\{ f(x+h)-\frac{1}{2\eps}
\norm{h}^2\right\}
\]
where $\nabla$ is the gradient associated with the norm $\norm{\cdot}$.
Equivalently,
\[
\argmax_y \left\{ f(y)-\frac{1}{2\eps}
\norm{y-x}^2\right\}=x+\eps\nabla f(x)+O(\eps^2)
\]
when $\eps\to0$.
\end{lem}

Proposition~\ref{prop:maxent} follows by taking the Fisher information
metric at $\theta^0$ for the metric $\norm{\cdot}^2$, and using the
relation $\KL{P}{Q}=-\Ent(P)+\log\#X$
if $Q$ is the uniform distribution on a finite space $X$.

\subsection{Proof of Theorem~\ref{thm:consistency} (Convergence of
Empirical Means and Quantiles)}

Let us give a more precise statement including the necessary regularity
conditions.

\begin{prop}
Let $\theta\in\Theta$. Assume that the derivative $\frac{\partial\ln
P_\theta(x)}{\partial \theta}$ exists for $P_\theta$-almost all $x\in X$
and that $\E_{P_\theta} \abs{\frac{\partial\ln
P_\theta(x)}{\partial \theta}}^2<+\infty$. Assume that the function
$w$ is non-decreasing and bounded.

Let $(x_i)_{i\in \N}$ be a sequence of independent samples of $P_\theta$.
Then with probability $1$, as $N\to\infty$ we have
\[
\frac1N \sum_{i=1}^N \widehat W^f(x_i) \frac{\partial \ln
P_\theta(x_i)}{\partial \theta} \to \int W_\theta^f(x) \, \frac{\partial
\ln
P_\theta(x)}{\partial \theta}\, P_\theta(\d x)
\]
where
\[
\widehat W^f(x_i)=w\left(\frac{\rk_N(x_i)+1/2}{N}\right)
\]
with $\rk_N(x_i)=\#\{1\leq j\leq N,\, f(x_j)<f(x_i)\}$. (When there are
$f$-ties in the sample, $\widehat W^f(x_i)$ is defined as the average of
$w((r+1/2)/N)$ over the possible rankings $r$ of $x_i$.)
\end{prop}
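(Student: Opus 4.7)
The plan is to split the empirical sum into a ``main'' term that is a standard i.i.d.\ average and an ``error'' term coming from the replacement of the true quantile by the empirical rank, and to control each separately. Write $U_i = \frac{\partial \ln P_\theta(x_i)}{\partial \theta}$, so the quantity of interest is $S_N := \frac1N\sum_{i=1}^N \widehat W^f(x_i)\,U_i$. Decompose
\begin{equation*}
S_N = \underbrace{\tfrac1N\sum_{i=1}^N W_\theta^f(x_i)\,U_i}_{=:A_N} \;+\; \underbrace{\tfrac1N\sum_{i=1}^N \bigl(\widehat W^f(x_i)-W_\theta^f(x_i)\bigr)\,U_i}_{=:B_N}.
\end{equation*}
For $A_N$, the summands are i.i.d., and since $w$ is bounded and $\E\|U\|^2<\infty$, the strong law of large numbers gives $A_N\to \int W_\theta^f(x)\,U(x)\,P_\theta(\d x)$ almost surely.

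The bulk of the work is showing $B_N\to 0$ almost surely. The first step is to invoke Cauchy--Schwarz:
\begin{equation*}
\|B_N\|\leq \Bigl(\tfrac1N\sum_i \bigl(\widehat W^f(x_i)-W_\theta^f(x_i)\bigr)^2\Bigr)^{1/2}\Bigl(\tfrac1N\sum_i\|U_i\|^2\Bigr)^{1/2}.
\end{equation*}
The second factor converges a.s.\ to $\E\|U\|^2<\infty$. For the first factor, the key ingredient is the Glivenko--Cantelli theorem applied to the empirical c.d.f.\ of $f(x_i)$: almost surely, $\sup_t\bigl|\frac1N\#\{j:f(x_j)<t\}-\Pr_{x'\sim P_\theta}(f(x')<t)\bigr|\to 0$, and similarly with $\leq$. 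In particular, for every $i$, $\rk_N(x_i)/N$ lies in an interval $[q_\theta^-(x_i)-\eps_N,\,q_\theta^+(x_i)+\eps_N]$ with $\eps_N\to 0$ deterministically in $i$.

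The hard part is handling the fact that $w$ is only assumed non-decreasing and bounded, hence possibly discontinuous. The plan is to exploit that a bounded monotone function has at most countably many discontinuities $D\subset[0,1]$, and to show that for $P_\theta$-almost every $x$, the quantile $q_\theta^-(x)$ (equivalently $q_\theta^+(x)$) avoids $D$. When $f$ has no atoms under $P_\theta$, $q_\theta^-=q_\theta^+$ is uniformly distributed on $[0,1]$, so $\Pr(q_\theta^-(x)\in D)=0$. If $f$ has atoms, the averaging convention in the definitions of $W_\theta^f$ and $\widehat W^f$ makes both quantities equal to the same Riemann-type average of $w$ over the interval $[q_\theta^-(x),q_\theta^+(x)]$ (up to an error $O(\eps_N)$ from the endpoints, which is uniformly small since $w$ is bounded). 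Combined with uniform boundedness of $w$, dominated convergence then yields $\frac1N\sum_i(\widehat W^f(x_i)-W_\theta^f(x_i))^2\to 0$ almost surely.

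Putting the two pieces together gives $S_N\to \int W_\theta^f(x)\,U(x)\,P_\theta(\d x)$ almost surely, which is the claim after multiplying by $\deltat\,I^{-1}(\theta^t)$. The subtlest step is the discontinuity argument for $w$: it is where the non-continuous structure of rank-based weighting meets the continuity needs of the law of large numbers, and it is what prevents the result from being an immediate corollary of the i.i.d.\ LLN.
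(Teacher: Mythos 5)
Your overall architecture is the same as the paper's: split off the i.i.d.\ term $A_N$ (strong law of large numbers), apply Cauchy--Schwarz to $B_N$, and use Glivenko--Cantelli to control the gap between empirical ranks and true quantiles. The genuine gap is in your final step, the claim that ``dominated convergence'' yields $\frac1N\sum_{i=1}^N\bigl(\widehat W^f(x_i)-W_\theta^f(x_i)\bigr)^2\to 0$ almost surely. The summands form a \emph{triangular array}: $\widehat W^f(x_i)$ depends on $N$ through the rank $\rk_N(x_i)$, so what you actually have is $a_{N,i}:=\bigl(\widehat W^f(x_i)-W_\theta^f(x_i)\bigr)^2$ with, at best, $a_{N,i}\to 0$ as $N\to\infty$ for each \emph{fixed} $i$ (when $q_\theta^\pm(x_i)$ avoids the countable discontinuity set of $w$), together with the uniform bound $a_{N,i}\leq 4B^2$. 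That is not enough: a bounded array with $a_{N,i}\to0$ for each fixed $i$ can still have $\frac1N\sum_{i\leq N}a_{N,i}$ bounded away from $0$ (take $a_{N,i}=\1_{i>N/2}$), and dominated convergence does not apply to a Ces\`aro average over a growing index set. Knowing that each individual quantile misses the discontinuities of $w$ gives no uniformity over the $N$ points whose quantiles fall within $\eps_N$ of a jump, and it is precisely those points that could keep the average large. The treatment of ties has the same flavour of gap: the pointwise error between the discrete rank-average and the quantile-average of $w$ is not $O(\eps_N)$ uniformly in $i$ when $w$ jumps inside the tie interval.

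The paper closes exactly this hole by a global monotonicity argument rather than a pointwise one: after summing $\abs{W_\theta^f(x_i)-\widehat W^f(x_i)}_\pm$ over $i$, the ranks enumerate $0,\dots,N-1$, so the sum is bounded by $\frac1N\sum_{k=0}^{N-1}\bigl(w(k/N-\eps)-w((k+1/2)/N)\bigr)$, and comparing these Riemann sums of the monotone function $w$ with integrals of $w$ over slightly shifted ranges gives the explicit bound $(2\eps+3/N)B$, with no continuity assumption on $w$ and no case distinction about where its discontinuities lie. A correct repair of your argument would bound each term by the oscillation of $w$ on $[q_\theta^-(x_i)-\eps,\,q_\theta^+(x_i)+\eps]$ for a fixed $\eps\geq\eps_N$, apply the law of large numbers to these (now genuinely i.i.d.) oscillations, and use that the integral of the oscillation over $[0,1]$ is $O(\eps B)$ by monotonicity---but note that this is an integral-comparison argument of the paper's type, not dominated convergence; the ``countably many discontinuities'' observation alone cannot deliver the conclusion.
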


\begin{proof}
Let $g:X\to \R$ be any function with $\E_{P_\theta} g^2<\infty$. We
will show that $\frac1N \sum \widehat W^f(x_i) g(x_i)
\to \int W_\theta^f(x)
g(x)\, P_\theta(\d x)$. Applying this with $g$ equal to the components of
$\frac{\partial \ln P_\theta(x)}{\partial \theta}$ will yield the result.

Let us decompose
\begin{align*}
\frac1N \sum \widehat W^f(x_i) g(x_i)=\frac1N \sum W_\theta^f(x_i) g(x_i)+\frac1N
\sum (\widehat W^f(x_i)-W_\theta^f(x_i)) g(x_i).
\end{align*}
Each summand in the first term involves only one sample $x_i$ (contrary
to $\widehat W^f(x_i)$ which depends on the whole sample). So by the
strong law of large numbers, almost surely $\frac1N \sum W_\theta^f(x_i)
g(x_i)$ converges to $\int W_\theta^f(x)
g(x)\, P_\theta(\d x)$. So we have to show that the second term converges
to $0$ almost surely.

By the Cauchy--Schwarz inequality,
we have
\[
\abs{\frac1N
\sum (\widehat W^f(x_i)-W_\theta^f(x_i)) g(x_i)}^2
\leq 
\left(\frac1N \sum (\widehat W^f(x_i)-W_\theta^f(x_i))^2 \right)
\left(\frac1N \sum g(x_i)^2 \right)
\]
By the strong law of large numbers, the second term $\frac1N \sum
g(x_i)^2$ converges to $\E_{P_\theta} g^2$ almost surely. So we have to
prove that the first term $\frac1N \sum (\widehat W^f(x_i)-W_\theta^f(x_i))^2$ converges to $0$
almost surely.

Since $w$ is bounded by assumption, we can write
\begin{align*}
(\widehat W^f(x_i)-W_\theta^f(x_i))^2
& \leq 2B\abs{\widehat W^f(x_i)-W_\theta^f(x_i)}
\\&=2B\abs{\widehat W^f(x_i)-W_\theta^f(x_i)}_+ +2B\abs{\widehat W^f(x_i)-W_\theta^f(x_i)}_-
\end{align*}
where $B$ is the bound on $\abs{w}$. We will bound each of these terms.

Let us abbreviate $q_i^<=\Pr_{x'\sim P_\theta}(f(x')<f(x_i))$,
$q_i^\leq=\Pr_{x'\sim P_\theta}(f(x')\leq f(x_i))$,
$r_i^<=\#\{j\!\leq\! N, \,f(x_j)<f(x_i)\}$, $r_i^\leq=
\#\{j\!\leq\! N , \, f(x_j)\leq f(x_i)\}$.

By definition of $\widehat W^f$ we have
\[
\widehat W^f(x_i)=\frac{1}{r_i^\leq -r_i^<}\sum_{k=r_i^<}^{r_i^\leq -1}
w((k+1/2)/N)
\]
and moreover $W_\theta^f(x_i)=w(q_i^<)$ if $q_i^<=q_i^\leq$ or
$W_\theta^f(x_i)=\frac{1}{q_i^\leq - q_i^<}\int_{q_i^<}^{q_i^\leq} w$ otherwise.

The Glivenko--Cantelli theorem \cite[Theorem 20.6]{Billingsley_PM}
implies that $\sup_i \abs{q_i^\leq -
r_i^\leq/N}$ tends to $0$ almost surely, and likewise for $\sup_i
\abs{q_i^< -
r_i^</N}$. So let $N$ be large enough so that these errors are bounded
by $\eps$.

Since $w$ is non-increasing, we have $w(q_i^<)\leq w(r_i^</N-\eps)$.
In the case $q_i^<\neq q_i^\leq$, we decompose the interval
$[q_i^<;q_i^\leq]$ into $(r_i^\leq -r_i^<)$ subintervals. The average of $w$
over each such subinterval is compared to a term in the sum defining
$w^N(x_i)$: since $w$ is non-increasing,
the average of $w$
over the $k^\text{th}$ subinterval is at most $w((r_i^<+k)/N-\eps)$. So we
get
\[
W_\theta^f(x_i)\leq \frac{1}{r_i^\leq -r_i^<}\sum_{k=r_i^<}^{r_i^\leq-1}
w(k/N-\eps)
\]
so that
\[
W_\theta^f(x_i)-\widehat W^f(x_i)\leq \frac{1}{r_i^\leq -r_i^<}
\sum_{k=r_i^<}^{r_i^\leq-1}
(w(k/N-\eps) -w((k+1/2)/N)).
\]

Let us sum over $i$, remembering that there are $(r_i^\leq -r_i^<)$ values
of $j$ for which $f(x_j)=f(x_i)$. Taking the positive part, we get
\[
\frac1N \sum_{i=1}^N \abs{W_\theta^f(x_i)-\widehat W^f(x_i)}_+
\leq \frac1N \sum_{k=0}^{N-1} (w(k/N-\eps) -w((k+1/2)/N)).
\]

Since $w$ is non-increasing we have
\[
\frac1N \sum_{k=0}^{N-1} w(k/N-\eps)\leq \int_{-\eps-1/N}^{1-\eps-1/N} w
\]
and
\[
\frac1N \sum_{k=0}^{N-1} w((k+1/2)/N)\geq \int_{1/2N}^{1+1/2N} w
\]
(we implicitly extend the range of $w$ so that $w(q)=w(0)$ for $q<0$\new{
and likewise for $q>1$}).
So we have
\[
\frac1N \sum_{i=1}^N \abs{W_\theta^f(x_i)-\widehat W^f(x_i)}_+
\leq \int_{-\eps-1/N}^{1/2N} w - \int_{1-\eps-1/N}^{1+1/2N}w
\leq (2\eps+3/N)B
\]
where $B$ is the bound on $\abs{w}$.

Reasoning symmetrically with $w(k/N+\eps)$ and the inequalities reversed,
we get a similar bound for $\frac1N \sum \abs{W_\theta^f(x_i)-\widehat W^f(x_i)}_-$.
This ends the proof.
\end{proof}

\subsection{Proof of Proposition~\ref{prop:qimprove} (Quantile
Improvement)}

Let us use the weight
$w(u)=\mathbbm{1}_{u\leq q}$.
Let $m$ be the value of the $q$-quantile of $f$ under $P_{\theta^t}$. We want to show that
the value of the $q$-quantile of $f$ under $P_{\theta^{t+\deltat}}$ is less than $m$,
unless the gradient vanishes and the IGO flow is stationary.

Let $p_-=\Pr_{x\sim P_{\theta^t}} (f(x)<m)$, $p_m=\Pr_{x\sim P_{\theta^t}}
(f(x)=m)$ and $p_+=\Pr_{x\sim P_{\theta^t}} (f(x)>m)$. By definition of the
quantile value we have $p_-+p_m\geq q$ and $p_++p_m\geq 1-q$.  Let us assume
that we are in the more complicated case $p_m\neq 0$ (for the case
$p_m=0$, simply remove the corresponding terms).

We have $W_{\theta_t}^f(x)=1$ if $f(x)<m$, $W_{\theta_t}^f(x)=0$ if $f(x)>m$
and $W_{\theta_t}^f(x)=\frac1{p_m} \int_{p_-}^{p_-+p_m}w(u)\d u=\frac{q-p_-}{p_m}$ if $f(x)=m$.

Using the same notation as above, let $g_t(\theta)=\int
W_{\theta^t}^f(x)\, P_\theta(\d x)$. Decomposing
this integral on the three sets $f(x)<m$, $f(x)=m$ and $f(x)>m$, we get
that 
$g_t(\theta)=\Pr_{x\sim P_\theta}(f(x)<m)+\Pr_{x\sim
P_\theta}(f(x)=m) \frac{q-p_-}{p_m}$.
In particular,
$g_t(\theta^t)= q$.

Since we follow a gradient ascent of $g_t$, for $\deltat$ 
small enough we
have $g_t(\theta^{t+\deltat})> g_t(\theta^t)$ unless the gradient
vanishes. If the gradient vanishes we have $\theta^{t+\deltat}=\theta^t$
and the quantiles are the same. Otherwise
we get $g_t(\theta^{t+\deltat})>g_t(\theta^t)=q$.

Since $\frac{q-p_-}{p_m}\leq \frac{(p_-+p_m)-p_-}{p_m} = 1$,
we have $g_t(\theta)\leq \Pr_{x\sim P_\theta}(f(x)<m)+\Pr_{x\sim
P_\theta}(f(x)=m)=\Pr_{x\sim P_\theta}(f(x)\leq m)$.

So $\Pr_{x\sim
P_{\theta^{t+\deltat}}}(f(x)\leq m)\geq g_t(\theta^{t+\deltat}) >q$. This
implies, by definition, that the
$q$-quantile value of $P_{\theta^{t+\deltat}}$ is at most $m$. Moreover,
if the objective function has no plateau then $\Pr_{x\sim
P_{\theta^{t+\deltat}}}(f(x)=m)=0$ and so $\Pr_{x\sim
P_{\theta^{t+\deltat}}}(f(x)< m)>q$ which implies that the $q$-quantile
of $P_{\theta^{t+\deltat}}$ is stricly less than $m$.

\subsection{Proof of Theorem~\ref{thm:IGOML} (Natural Gradient as
ML with Infinitesimal Weights)}

The proof of Theorem~\ref{thm:IGOML} will use
Lemma~\ref{lem:gradargmax}. Let $W$ be a
function of $x$, and fix some $\theta_0$ in $\Theta$.

We need some regularity assumptions: we assume that no two points $\theta\in\Theta$ define the
same probability distribution and that the map $P_\theta\mapsto\theta$
is continuous. We also assume that the map $\theta\mapsto P_\theta$ is
smooth enough, so that $\int \ln P_\theta(x) \, W(x) \,P_{\theta_0}(\d
x)$ is a smooth function of $\theta$. (These are restrictions on
$\theta$-regularity: this does not mean that $W$ has to
be continuous as a function of $x$.)

The two
statements of Theorem~\ref{thm:IGOML} using a sum and an integral have similar proofs, so we only
include the first.
For $\eps>0$, let
$\theta$ be the solution of
\[
\theta=\argmax\Bigg\{
(1-\eps) \int \ln P_\theta(x) \, P_{\theta_0}(\d x)
+\eps \int \ln P_\theta(x) \, W(x) \,P_{\theta_0}(\d x)
\Bigg\}.
\]

Then we have
\begin{align*}
\theta &=
\argmax\Bigg\{
\int \ln P_\theta(x) \, P_{\theta_0}(\d
x)
+\eps \int \ln P_\theta(x) \, (W(x)-1) \,P_{\theta_0}(\d x)
\Bigg\}
\\&=
\argmax\Bigg\{
\int \ln P_\theta(x) \, P_{\theta_0}(\d
x)
-\int \ln P_{\theta_0}(x) \, P_{\theta_0}(\d
x)
+\eps \int \ln P_\theta(x) \, (W(x)-1) \,P_{\theta_0}(\d x)
\Bigg\}
\intertext{(because the added term does not depend on $\theta$)}
&=\argmax\Bigg\{
-\KL{P_{\theta_0}}{P_\theta}+\eps \int \ln P_\theta(x) \, (W(x)-1)
\,P_{\theta_0}(\d x)
\Bigg\}\
\\
&=\argmax\Bigg\{
-\frac{1}{\eps} \KL{P_{\theta_0}}{P_\theta}+\int \ln P_\theta(x) \, (W(x)-1)
\,P_{\theta_0}(\d x)
\Bigg\}
\end{align*}

When $\eps\to 0$, the first term exceeds the second one if
$\KL{P_{\theta_0}}{P_\theta}$ is too large (because $W$ is bounded), and
so $\KL{P_{\theta_0}}{P_\theta}$ tends to $0$. So we can assume that
$\theta$ is close to $\theta_0$.

When $\theta=\theta_0+\delta\theta$ is close to $\theta_0$, we have
\[
\KL{P_{\theta_0}}{P_\theta}=\frac12 \sum I_{ij}(\theta_0)\,
\delta\theta_i\,\delta\theta_j
+O(\delta\theta^3)
\]
with $I_{ij}(\theta_0)$ the Fisher matrix at $\theta_0$. (This actually holds both for
$\KL{P_{\theta_0}}{P_\theta}$ and $\KL{P_{\theta}}{P_{\theta_0}}$.)

Thus, we can apply Lemma~\ref{lem:gradargmax} using the Fisher metric $\sum
I_{ij}(\theta_0)\,
\delta\theta_i\,\delta\theta_j$, and working on a small neighborhood of
$\theta_0$ in $\theta$-space (which can be identified with $\R^{\dim
\Theta}$). The lemma states that the argmax above is attained at
\[
\theta=\theta_0+\eps \widetilde\nabla_\theta \int \ln P_\theta(x) \,
(W(x)-1)
\,P_{\theta_0}(\d x)
\]
up to $O(\eps^2)$, with $\widetilde\nabla$ the natural gradient.

Finally, the gradient cancels the constant $-1$ because $
\int (\widetilde \nabla \ln P_\theta)\,P_{\theta_0}=0$ at
$\theta=\theta_0$. This proves Theorem~\ref{thm:IGOML}.

\subsection{Proof of Theorem~\ref{thm:IGOCEM} (IGO, CEM and IGO-ML)}

Let
$P_\theta$ be a family of probability distributions of the form
\[
P_\theta(\d x)=\frac1{Z(\theta)}\exp\left(\sum \theta_i
T_i(x)\right)\,H(\d x)
\] where $T_1,\ldots,T_k$
is a finite family of functions
on $X$ and $H(\d x)$ is some reference measure on $X$.
We assume that the family of functions $(T_i)_i$ together with the
constant function $T_0(x)=1$, are linearly independent. This prevents
redundant parametrizations where two values of $\theta$ describe the same
distribution; this also ensures, by elementary linear algebra, that the Fisher matrix $\Cov(T_i,T_j)$ is invertible.

The IGO update \eqref{eq:IGOupdate} in the parametrization $\bar T_i$ is
a sum of terms of the form \[
\widetilde\nabla_{\bar T_i} \ln P(x).
\]
So we will 
compute the natural gradient $\widetilde\nabla_{\bar T_i}$ in those
coordinates.

Let us start with a 
proposition giving an expression for the Fisher scalar product
between two tangent vectors $\delta P$ and $\delta'P$ of a statistical
manifold of exponential distributions. It is one way to express the duality between the coordinates
$\theta_i$ and $\bar T_i$ (compare \cite[(3.30) and Section
3.5]{Amari2000book}).

\begin{prop}
\label{prop:expdual}
Let $\delta\theta_i$ and $\delta'\theta_i$ be two small variations of the parameters $\theta_i$.
Let $\delta P(x)$ and $\delta' P(x)$ be the resulting variations of the
probability distribution $P$, and $\delta \bar T_i$ and $\delta' \bar
T_i$ the resulting variations of
$\bar T_i$. Then the scalar product, in Fisher
information metric, between the tangent vectors $\delta P$ and $\delta'
P$, is
\[
\langle \delta P,\delta' P\rangle=\sum_i \delta\theta_i \,\delta'\bar T_i=\sum_i
\delta'\theta_i \,\delta\bar T_i.
\]
\end{prop}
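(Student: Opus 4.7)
The plan is to exploit the two expressions \eqref{eq:gradexp} and \eqref{eq:fishexp} already stated for exponential families, together with the definition of the Fisher metric. First I would recall that, by \eqref{eq:fishexp}, the Fisher scalar product of two tangent vectors in the $\theta$-parametrization reads
\[
\langle \delta P, \delta' P\rangle = \sum_{i,j} I_{ij}(\theta)\, \delta\theta_i\, \delta'\theta_j = \sum_{i,j} \Cov_{P_\theta}(T_i,T_j)\, \delta\theta_i\, \delta'\theta_j.
\]
So the whole proposition will follow once I rewrite the right-hand side above in terms of the $\delta'\bar T_j$ (or $\delta \bar T_i$).

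Second, I would compute the Jacobian of the change of coordinates from $\theta$ to $\bar T$. Starting from $\bar T_j(\theta)=\int T_j(x)\, P_\theta(\d x)$ and using the log-likelihood trick together with \eqref{eq:gradexp},
\[
\frac{\partial \bar T_j}{\partial \theta_i} = \int T_j(x)\,\frac{\partial \ln P_\theta(x)}{\partial \theta_i}\,P_\theta(\d x) = \E_{P_\theta}\!\left[T_j\,(T_i-\E_{P_\theta} T_i)\right] = \Cov_{P_\theta}(T_i,T_j),
\]
so that to first order $\delta'\bar T_j = \sum_i \Cov_{P_\theta}(T_i,T_j)\, \delta'\theta_i$. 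Substituting this into the expression for $\langle \delta P, \delta' P\rangle$ collapses the double sum:
\[
\langle \delta P,\delta' P\rangle = \sum_i \delta\theta_i\,\delta'\bar T_i,
\]
and by the symmetry of the covariance, the same expression also equals $\sum_i \delta'\theta_i\,\delta\bar T_i$.

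There is no real obstacle here; the only things to be careful about are the regularity needed to differentiate under the integral (which is standard for exponential families, where interchanging $\partial_\theta$ and $\int$ is justified whenever the $T_i$ are $P_\theta$-integrable in a neighborhood of $\theta$) and the fact that the mapping $\theta\mapsto \bar T$ is locally a diffeomorphism. The latter follows precisely from the assumption that the $T_i$ together with the constant function $1$ are linearly independent, which makes $\Cov_{P_\theta}(T_i,T_j)$ invertible and hence the Jacobian $\partial \bar T/\partial \theta$ invertible; so the $\bar T_i$ are genuine local coordinates, and the bilinear form above unambiguously defines a scalar product on tangent vectors expressed via either $\delta\theta$ or $\delta\bar T$.
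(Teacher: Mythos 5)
Your proof is correct, and it reaches the identity by a slightly different route than the paper. You organize the argument around the change of coordinates $\theta\mapsto\bar T$: you compute the Jacobian $\partial\bar T_j/\partial\theta_i=\Cov_{P_\theta}(T_i,T_j)$ by differentiating under the integral and using \eqref{eq:gradexp}, then invoke \eqref{eq:fishexp} to recognize this Jacobian as the Fisher matrix, so that contracting $I_{ij}$ with $\delta'\theta_j$ collapses the double sum into $\sum_i\delta\theta_i\,\delta'\bar T_i$. The paper never uses \eqref{eq:fishexp}: it starts from the score-product definition of $I_{ij}$, groups $\sum_j\frac{\partial\ln P(x)}{\partial\theta_j}\,\delta'\theta_j\,P(x)$ into the variation $\delta' P(x)$ of the distribution, applies \eqref{eq:gradexp} to the remaining factor $\sum_i\frac{\partial\ln P(x)}{\partial\theta_i}\,\delta\theta_i$, and concludes from $\int T_i\,\delta'P=\delta'\bar T_i$ and $\int\delta'P=0$. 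The two computations are equivalent (both ultimately prove $\sum_j I_{ij}\,\delta'\theta_j=\delta'\bar T_i$), but each buys something slightly different: your version makes explicit the classical duality relation $\delta\bar T=I\,\delta\theta$ and carefully flags the regularity and local-diffeomorphism issues (linear independence of the $T_i$ and the constant, hence invertibility of $\Cov_{P_\theta}(T,T)$), which is the same nondegeneracy hypothesis the paper states just before the proposition; the paper's manipulation works directly with variations of $P$ and only needs $\delta\bar T_i$ as induced variations, so it does not rely on $\bar T$ being an admissible coordinate system, which matches the way the statement is phrased.
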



\begin{proof}

First, let us prove that $\frac{\partial \bar T_i}{\partial
\theta_j}=I_{ij}$, the Fisher matrix for $\theta$. Indeed, $\frac{\partial \bar T_i}{\partial
\theta_j}=\int_x T_i(x)\frac{\partial P(x)}{\partial \theta_j}=\int_x
T_i(x)P(x) \frac{\partial \ln P(x)}{\partial \theta_j}=\int_x
T_i(x)P(x)(T_j(x)-\bar T_j)$ by \eqref{eq:gradexp}; this is equal to
$\Cov(T_i,T_j)$ which is $I_{ij}$ by \eqref{eq:fishexp}. \note[Anne]{the
$\sum_x$ suggests it only holds for discrete distribution\NDY{fixed}. Also I do not see why $\Cov(T_i,T_j) =  \sum_x
T_i(x)P(x)(T_j(x)-\bar T_j)$, why you do not need $T_i$ to be centered?
\NDY{When computing a covariance, it is enough to center one of the two
variables only (any constant added to the other will cancel out):
$\E((X+c)(Y-\E Y))=\E(X(Y-\E Y))$.}\note[Anne]{OK thanks - not so sure why I didn't get it immediately by myself :-(.}}

Then, by definition of the Fisher metric we have $\langle \delta
P,\delta' P\rangle=\sum_{i,j} I_{ij} \,\delta\theta_i \,\delta'\theta_j$
but $\sum_j I_{ij} \delta'\theta_j$ is equal to $\delta' \bar T_i$ by the
above, because $\delta'\bar T_i=\sum_j 
\frac{\partial \bar T_i}{\partial
\theta_j}\delta'\theta_j$. Thus we find $\langle \delta
P,\delta' P\rangle=\sum_i \delta\theta_i \delta'\bar T_i$ as needed.\del{
By definition of
the Fisher metric:
\begin{align*}
\langle \delta P,\delta' P\rangle
&=\sum_{i,j} I_{ij} \,\delta\theta_i \,\delta'\theta_j
\\&=\sum_{i,j} \delta\theta_i \,\delta'\theta_j
\int_x \frac{\partial \ln P(x)}{\partial \theta_i}
\,\frac{\partial \ln P(x)}{\partial \theta_j}\,P(x)
\\&=
\int_x \sum_{i} \frac{\partial \ln P(x)}{\partial \theta_i}\delta\theta_i
\,\sum_j\frac{\partial \ln P(x)}{\partial \theta_j}\delta'\theta_j\,P(x)
\\&=\int_x
\sum_{i} 
 \frac{\partial \ln P(x)}{\partial \theta_i}\,\delta\theta_i 
\;\delta'(\ln P(x))\,P(x)
\\&=
\int_x \sum_i \frac{\partial \ln P(x)}{\partial \theta_i}\,\delta\theta_i\;\delta
'P(x)
\\&=\int_x \sum_i (T_i(x)-\bar T_i)\delta\theta_i\;\delta'
P(x)
\qquad\text{by \eqref{eq:gradexp}}
\\&=
\sum_i \delta\theta_i \left(\int_x T_i(x) \,\delta' P(x)\right)
-\sum_i \delta\theta_i \bar T_i \int_x \delta' P(x)
\\&=
\sum_i \delta\theta_i \,\delta'\bar T_i
\end{align*}
because $\int_x \delta' P(x)=0$ since the total mass of $P$ is $1$, and
$\int_x T_i(x) \,\delta' P(x)=\delta'
\bar T_i$ by definition of $\bar T_i$.  }
\end{proof}

\begin{prop}
\label{prop:dualgrad}
Let $f$ be a function on the statistical manifold of an exponential
family as above. Then the components of the natural gradient w.r.t.\ the
expectation parameters are given by the vanilla gradient w.r.t.\ the
natural parameters:
\begin{equation}
\label{eq:natgradduality2}
\widetilde\nabla_{\bar T_i} f=\frac{\partial f}{\partial \theta_i}
\end{equation}
and conversely
\begin{equation}
\label{eq:natgradduality}
\widetilde\nabla_{\theta_i} f=\frac{\partial f}{\partial \bar T_i}
\enspace.
\end{equation}
\end{prop}

(Beware this does \emph{not} mean that the gradient ascent in any of those
parametrizations is the vanilla gradient ascent.)

We could not find a reference for this result, though we think it is known
(as a consequence of \citealt[Eq.\ 3.32]{Amari2000book}).

\begin{proof}
%
We saw above that $\frac{\partial \bar T}{\partial \theta}=I$. Since
$\widetilde\nabla_\theta f=I^{-1}\frac{\partial f}{\partial \theta}$,
this proves \eqref{eq:natgradduality} by substituting 
$\frac{\partial f}{\partial \theta}=\left(\frac{\partial \bar T}{\partial
\theta}\right)^{\!\T}\frac{\partial f}{\partial \bar T}$.

For the first statement \eqref{eq:natgradduality2} (the one needed for Theorem~\ref{thm:IGOCEM}) we have
to derive the Fisher matrix for the variables $\bar T$. It follows from
Proposition~\ref{prop:expdual} that the Fisher matrix in these variables
is $I^{-1}$, by considering the Fisher metric
$\sum\delta\theta_i.\delta'\bar T_i$ and substituting $I^{-1}\delta\bar T$
for $\delta \theta$. Then \eqref{eq:natgradduality2}
is proved along the same lines as \eqref{eq:natgradduality}.\del{
By definition, the natural gradient $\widetilde\nabla f$ of a function
$f$ is the unique tangent vector $\delta P$  such 
that, for any other tangent vector $\delta' P$, we have
\[
\delta' f=\langle \delta P, \delta' P\rangle
\]
with $\langle \cdot,\cdot\rangle$ the scalar product associated with the
Fisher metric and $\delta'f $ the first-order variation of $f$ associated
with $\delta'P$. We want to compute this natural gradient in coordinates
$\bar T_i$, so we are interested in the variations $\delta \bar T_i$
associated with $\delta P$.

By Proposition~\ref{prop:expdual}, the scalar 
product above is
\[
\langle \delta P, \delta' P\rangle=\sum \delta \bar T_i\,
\delta'\theta_i
\]
where $\delta \bar T_i$ is the variation of $\bar T_i$ associated with
$\delta P$, and $\delta'\theta_i$ the variation of $\theta_i$ associated
with $\delta' P$.

On the other hand we have $\delta'f=\sum_i \frac{\partial
f}{\partial\theta_i}\,\delta'\theta_i$. So we must have
\[
\sum_i \delta \bar T_i \,\delta' \theta_i= \sum_i \frac{\partial
f}{\partial\theta_i}\,\delta'\theta_i
\]
for any $\delta'P$, which leads to
\[
\delta \bar T_i=\frac{\partial f}{\partial \theta_i}
\]
as needed. The converse relation is proved \emph{mutatis mutandis}.}
\end{proof}

Back to the proof of Theorem~\ref{thm:IGOCEM}. We can now compute the
desired terms:
\[
\widetilde\nabla_{\bar T_i}\ln P(x)=\frac{\partial \ln P(x)}{\partial
\theta_i}=T_i(x)-\bar T_i
\]
by \eqref{eq:gradexp}.
This proves the first statement \eqref{eq:IGOEP} in Theorem~\ref{thm:IGOCEM} about the
form of the IGO update in these parameters.

The other statements follow easily from this together with the
additional fact \eqref{eq:TML} that, for any set of (positive or
negative) weights $a_i$ with
$\sum a_i=1$, the value $T^\ast=\sum_i \new{a_i}\del{a(i)} T(x_i)$ 
maximizes $\sum_i \new{a_i}\del{a(i)} \ln P(x_i)$.

\subsection{Proof of Proposition~\ref{prop:speed} and
Corollary~\ref{cor:KLspeed} (Speed of IGO)}

\begin{lem}
Let $X$ be a centered $L^2$ random variable with values in $\R^d$ and let $A$ be a
real-valued $L^2$ random variable. Then
\[
\norm{\E (AX)}\leq \sqrt{\lambda \Var A}
\]
where $\lambda$ is the largest eigenvalue of the covariance matrix of $X$
expressed in an orthonormal basis.
\end{lem}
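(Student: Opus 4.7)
The plan is a two-step reduction: first strip off the mean of $A$ using the fact that $X$ is centered, then combine the scalar Cauchy--Schwarz inequality with the variational characterization of the spectral norm of the covariance of $X$.

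Concretely, set $\tilde A = A - \E A$, so that $\E \tilde A = 0$ and $\Var A = \E \tilde A^2$. Since $\E X = 0$, we have
\[
\E(AX) = \E(\tilde A X) + \E A \cdot \E X = \E(\tilde A X),
\]
so it suffices to bound $\norm{\E(\tilde A X)}$. The next step is to dualize: for any vector $v \in \R^d$, $\norm{v} = \sup_{\norm{u}\leq 1} \langle u, v \rangle$, whence
\[
\norm{\E(\tilde A X)} = \sup_{\norm{u}\leq 1} \E\bigl(\tilde A \, \langle u, X\rangle\bigr).
\]

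Now I apply the scalar Cauchy--Schwarz inequality to each inner expectation, using that $\tilde A$ is centered:
\[
\bigl|\E(\tilde A \, \langle u, X\rangle)\bigr|^2 \;\leq\; \E(\tilde A^2) \cdot \E\bigl(\langle u, X\rangle^2\bigr) \;=\; \Var(A) \cdot u^\T C u,
\]
where $C = \E(XX^\T)$ is the covariance matrix of $X$ (well-defined since $X$ is centered and $L^2$). Finally, the spectral bound $u^\T C u \leq \lambda \norm{u}^2 \leq \lambda$ for $\norm{u}\leq 1$, which is just the Rayleigh quotient characterization of the largest eigenvalue of the symmetric positive semidefinite matrix $C$ in an orthonormal basis, gives
\[
\norm{\E(AX)}^2 \;\leq\; \lambda \, \Var(A),
\]
and taking square roots yields the claim.

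There is no real obstacle here: the only point to be careful about is to apply Cauchy--Schwarz to $\tilde A$ rather than to $A$ (otherwise one would get $\E A^2$ on the right-hand side instead of $\Var A$), which is exactly why the hypothesis that $X$ is centered is used at the outset to replace $A$ by $\tilde A$ for free.
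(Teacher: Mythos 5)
Your proof is correct and follows essentially the same route as the paper's: dualize the norm as a supremum over unit vectors, use centering to replace $A$ by $A-\E A$ at no cost, apply the scalar Cauchy--Schwarz inequality, and bound the quadratic form $u^\T C u$ by the largest eigenvalue. The only cosmetic difference is that you center $A$ before dualizing while the paper centers inside the expectation after dualizing (using that $w\cdot X$ is centered), which is the same observation applied in a different order.
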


\begin{proof}\del{[Proof of the lemma]}
Let $v$ be any vector in $\R^d$; its norm satisfies
\[
\norm{v}=\sup_{w,\,\norm{w}\leq 1} (v\cdot w)
\]
and in particular
\begin{align*}
\norm{\E (AX)}
&=
\sup_{w,\,\norm{w}\leq 1} (w \cdot \E (AX))
\\&=
\sup_{w,\,\norm{w}\leq 1} \E (A\,(w \cdot X))
\\&=
\sup_{w,\,\norm{w}\leq 1} \E ((A-\E A)\,(w \cdot X))
\quad\text{since $(w \cdot X)$ is centered}
\\&\leq
\sup_{w,\,\norm{w}\leq 1} \sqrt{\Var A} \,\sqrt{\E((w\cdot X)^2)}
\end{align*}
by the Cauchy--Schwarz inequality.

Now, in an orthonormal basis we have
\[
(w\cdot X)=\sum_i w_i X_i
\]
so that
\begin{align*}
\E((w\cdot X)^2)
&=
\E\left((\tsum_i w_i X_i)(\tsum_j w_j X_j)\right)
\\&=\tsum_i \tsum_j \E (w_i X_i w_j X_j)
\\&=\tsum_i \tsum_j w_i w_j \E (X_i X_j)
\\&=\tsum_i \tsum_j w_i w_j C_{ij}
\end{align*}
with $C_{ij}$ the covariance matrix of $X$. The latter expression is the
scalar product $(w\cdot Cw)$. Since $C$ is a symmetric positive-semidefinite
matrix, $(w\cdot Cw)$ is at most $\lambda \norm{w}^2$ with
$\lambda$ the largest eigenvalue of $C$.
\end{proof}

For the IGO flow we have $\frac{\d \theta^{t}}{\d t}=\E_{x\sim P_\theta}
W_\theta^f (x) \tilde\nabla_\theta \ln P_\theta(x)$\del{ or $\frac{\d \theta^{t}}{\d t}=\E_{x\sim P_\theta}
\left(W_\theta^f (x) - \E_{x\sim P_\theta}(W_\theta^f (x)) \right) \tilde\nabla_\theta \ln P_\theta(x) $ since removing a constant from $W_\theta^f (x)$ does not affect the IGO flow (see discussion in Section~\ref{sec:impl})}.

So applying the lemma, we get that the norm of $\frac{\d \theta}{\d t}$
is at most $\sqrt{\lambda \Var_{x\sim P_\theta} W_\theta^f (x)}$ where $\lambda$ is the
largest eivengalue of the covariance matrix of $\tilde\nabla_\theta \ln
P_\theta(x)$ (expressed in a coordinate system where the Fisher matrix at
the current point $\theta$ is
the identity).

By construction of the quantiles, we have $\Var_{x\sim P_\theta} W_\theta^f
(x)\leq \Var_{[0,1]} w$ (with equality unless there are ties). 
Indeed, for a given $x$, let $\mathcal{U}$ be a uniform random variable
in $[0,1]$ independent from $x$ and define the random variable
$Q=q^<(x)+(q^\leq(x)-q^<(x))\mathcal{U}$. Then $Q$ is uniformly distributed between the upper 
and lower quantiles $q^\leq(x)$ and $q^<(x)$ and thus  we can rewrite
$W_\theta^f(x) $  as $\E(w(Q)|x)$. By the Jensen inequality we have $\Var
W_\theta^f(x) = \Var \E(w(Q)|x) \leq \Var w(Q)$. In addition when $x$ is
taken under $P_\theta$, $Q$ is uniformly distributed in $[0,1]$\NDY{More
detailed proof of this? It's not completely trivial, but rather tedious I
think.}\note[Anne]{I would say that it's OK like that in particular because it's a result that appears "often" in the context of order statistics (if I am not mistaken)} and thus $ \Var w(Q) = \Var_{[0,1]} w$, i.e., $\Var_{x\sim P_\theta} W_\theta^f
(x)\leq \Var_{[0,1]} w$. 

Besides, consider the tangent space in $\Theta$-space at point
$\theta^t$, and let us choose an orthonormal basis in this tangent space for
the Fisher metric. Then, in this basis
we have $\tilde \nabla_i \ln P_\theta(x)=\partial_i \ln P_\theta(x)$. So
the covariance matrix of $\tilde \nabla \ln P_\theta(x)$ is $\E_{x\sim
P_{\theta}} (\partial_i \ln P_\theta(x) \partial_j \ln P_\theta(x))$,
which is equal to the Fisher matrix by definition. So this covariance
matrix is the identity, whose largest eigenvalue is $1$. This proves
Proposition~\ref{prop:speed}.

For Corollary~\ref{cor:KLspeed}, by the relationship~\eqref{eq:IKL} between
Fisher matrix and Kullback--Leibler divergence, if $v$ is the speed of
the IGO flow then the Kullback--Leibler divergence between $P_{\theta^t}$
and $P_{\theta^{t+\deltat}}$ (where $P_{\theta^{t+\deltat}}$ is the
trajectory of the IGO flow after a time $\deltat$) is equal to the square
norm of $\deltat.v$ in Fisher metric up to an $O(\norm{\deltat.v}^3)$
term. Now if $P_{\theta^{t+\deltat}}$ is obtained by a finite-population
IGO \emph{algorithm}, by Theorem~\ref{thm:consistency} the actual $v$
from the IGO algorithm differs from the speed of the IGO flow by an
$o(1)_{N\to\infty}$ term. Collecting terms, we find the expression in
Corollary~\ref{cor:KLspeed}.

\subsection{Proof of Proposition~\ref{prop:noisyIGO} (Noisy IGO)}

On the one hand, let $P_{\theta}$ be a family of distributions on $X$.
The IGO algorithm \eqref{eq:intrinsicIGOupdate} applied to a random function  $f(x)=\tilde
f(x,\omega)$ where $\omega$ is a random variable uniformly distributed in
$[0,1]$ reads
\begin{equation}\label{eq:igonoisyone}
\theta^{t + \deltat} = \theta^{t} + \deltat \sum_{i=1}^{N} \hat{w_{i}} \widetilde \nabla_{\theta} \ln P_{\theta}(x_{i})
\end{equation}
where $x_{i} \sim P_{\theta}$ and $\hat{w_{i}}$ is according to
\eqref{eq:wi} where ranking is applied to the values $\tilde
f(x_{i},\omega_{i})$, with $\omega_{i}$ uniform variables in $[0,1]$
independent from $x_{i}$ and from each other.

On the other hand, for
the IGO algorithm using $P_\theta\otimes U_{[0,1]}$ and applied to the deterministic function $\tilde f$,  $\hat{w_{i}}$ is computed using the ranking according to the $\tilde f$ values of the sampled points $\tilde x_{i} = (x_{i}, \omega_{i}) $, and thus coincides with the one in \eqref{eq:igonoisyone}.

Besides,
$$
\partial_{\theta} \ln P_{\theta\otimes U_{[0,1]}}(\tilde x_{i})
=
\partial_{\theta} \ln P_{\theta}(x_{i}) + 
\underbrace{\partial_{\theta} \ln U_{[0,1]}(\omega_{i})}_{=0}
$$
and thus, both the vanilla gradients and the Fisher matrix $I$ (given by
the tensor square of the vanilla gradients) coincide.
This proves that the IGO algorithm update on space $X\times [0,1]$, using the
family of distributions $\tilde P_\theta=P_\theta\otimes U_{[0,1]}$, applied to
the deterministic function $\tilde f$, coincides with \eqref{eq:igonoisyone}.

\bibliography{14-467}

\end{document}